\documentclass{article}

\usepackage{geometry,amsmath,amsthm,amssymb,mathrsfs,graphicx,extarrows,float,mathabx}
\usepackage[numbers]{natbib}
\usepackage{titlesec}
\usepackage[titletoc,toc,title]{appendix}
\usepackage{graphicx}
\usepackage{faktor}
\usepackage{hyperref}
\usepackage{latexsym,fullpage,amsfonts,amssymb,amsmath,amscd,graphics,epic}
\usepackage[all]{xy}
\usepackage{amssymb,amsthm,amsxtra}
\usepackage[usenames]{color}
\usepackage{amscd}
\usepackage{amsthm}
\usepackage{amsfonts}
\usepackage{amssymb}
\usepackage{mathrsfs}
\usepackage{mathtools}
\usepackage{caption}
\usepackage{subcaption}

\newtheorem{thm}{Theorem}[section]
\newtheorem{prop}[thm]{Proposition}
\newtheorem{defin}[thm]{Definition}

\newtheorem{corr}[thm]{Corollary}
\newtheorem{lemma}[thm]{Lemma}
\newtheorem{remark}[thm]{Remark}
\newtheorem{exmp}[thm]{Example}

\newtheorem*{thm*}{Theorem}
\newtheorem*{defin*}{Definition}

\DeclareMathOperator{\oh}{\mathcal{O}_q(H(N))}
\DeclareMathOperator{\ohtwo}{\mathcal{O}_q(H(2))}
\DeclareMathOperator{\boh}{\mathfrak{o}_q(H(N))}
\DeclareMathOperator{\bohtwo}{\mathfrak{o}_q(H(2))}

\DeclareMathOperator{\ogl}{\mathcal{O}_q(GL(N))}
\DeclareMathOperator{\bogl}{\mathfrak{o}_q(GL(N))}

\DeclareMathOperator{\ugl}{U_q(\mathfrak{gl}_N)}
\DeclareMathOperator{\bgl}{\mathfrak{u}_q(\mathfrak{gl}_N)}
\DeclareMathOperator{\bgltwo}{\mathfrak{u}_q(\mathfrak{gl}_2)}
\DeclareMathOperator{\bglk}{\mathfrak{u}_q^{(k)}(\mathfrak{gl}_N)}
\DeclareMathOperator{\bglN}{\mathfrak{u}_q^{(N)}(\mathfrak{gl}_N)}
\DeclareMathOperator{\bglt}{\mathfrak{u}_q^{(2)}(\mathfrak{gl}_N)}
\DeclareMathOperator{\usl}{U_q(\mathfrak{sl}_N)}
\DeclareMathOperator{\usltwo}{U_q(\mathfrak{sl}_2)}
\DeclareMathOperator{\bsl}{\mathfrak{u}_q(\mathfrak{sl}_N)}
\DeclareMathOperator{\bsltwo}{\mathfrak{u}_q(\mathfrak{sl}_2)}
\DeclareMathOperator{\bsla}{\tilde{\mathfrak{u}}_q(\mathfrak{sl}_N)}
\DeclareMathOperator{\bslatwo}{\tilde{\mathfrak{u}}_q(\mathfrak{sl}_2)}
\DeclareMathOperator{\tp}{\tilde{p}}
\DeclareMathOperator{\hr}{\hat{R}}
\DeclareMathOperator{\mc}{\mathbb{C}}
\DeclareMathOperator{\mr}{\mathbb{R}}
\DeclareMathOperator{\mz}{\mathbb{Z}}
\DeclareMathOperator{\mn}{\mathbb{N}}
\DeclareMathOperator{\id}{id}
\DeclareMathOperator{\ot}{\mathcal{O}_q(T(N))}
\DeclareMathOperator{\otu}{\mathcal{O}_q(T^u(N))}
\DeclareMathOperator{\otl}{\mathcal{O}_q(T^l(N))}
\DeclareMathOperator{\bott}{\mathfrak{o}_q(T(N))}
\DeclareMathOperator{\botu}{\mathfrak{o}_q(T^u(N))}
\DeclareMathOperator{\botl}{\mathfrak{o}_q(T^l(N))}

\DeclareMathOperator{\botk}{\mathfrak{o}_q^{(k)}(T(N))}
\DeclareMathOperator{\br}{\mathbf{r}}
\DeclareMathOperator{\omn}{\mathcal{O}_q(M_N)}

\DeclareMathOperator{\mci}{\mathcal{I}}
\DeclareMathOperator{\hk}{\hat{K}}
\DeclareMathOperator{\Hom}{Hom}
\DeclareMathOperator{\Tr}{Tr}
\DeclareMathOperator{\Rep}{Rep}
\DeclareMathOperator{\mcs}{\mathcal{S}}
\DeclareMathOperator{\ote}{\mathcal{O}_{q,\epsilon}(T(N))}
\DeclareMathOperator{\bote}{\mathfrak{o}_{q,\epsilon}(T(N))}
\DeclareMathOperator{\botek}{\mathfrak{o}_{q,\epsilon}^{(k)}(T(N))}

\usepackage{mwe}

\newcommand{\Addresses}{{
		\bigskip
		\footnotesize 
		
		\textsc{Department of Pure Mathematics, Xi'an Jiaotong-Liverpool University, Suzhou, China}\par\nopagebreak
		\textit{E-mail address}: \texttt{Stephen.Moore@xjtlu.edu.cn} }}

\title{On a Small Version of the Reflection Equation Algebra}
\author{Stephen T. Moore}
\date{}

\begin{document}
	
	\maketitle 
	
\begin{abstract}
\noindent We give an alternative presentation of the small version of the reflection equation algebra associated to $GL_N$ at both odd and even roots of unity, and use our presentation to classify its irreducible representations. We then describe a family of algebras generalizing the small reflection equation algebra, and consider their application to the study of module categories over $\usl$ fusion categories.
\end{abstract}

\section{Introduction}
The \textit{reflection equation} was introduced by Cherednik \cite{Cherednik} in relation to quantum integrable systems with boundary condition. The \textit{reflection equation algebra} (REA) \cite{KulishSklyanin, Sklyanin} was introduced to study solutions of the reflection equation algebraically. More recently, the reflection equation algebra has found applications in areas such as non-commutative geometry and low-dimensional topology \cite{BZBJ, Majid2, Mudrov}. 

In general, the reflection equation algebra is not a Hopf algebra, but instead a comodule algebra over a Hopf algebra. The reflection equation depends on a choice of solution of the Yang-Baxter equation, and similarly the reflection equation algebra depends on such a choice. Starting with the $R$-matrix solution coming from the standard representation of $\ugl$, the associated reflection equation algebra $\oh$ forms a comodule algebra over both $\ugl$ and $\ogl$. This means that the representation category of $\oh$ can be viewed as a module category over both $\Rep\ugl$ and $\Rep\ogl$. 

A form of $\oh$ can actually be constructed from both $\ogl$ and $\ugl$. Firstly, $\oh$ can be obtained from $\ogl$ by modifying or twisting the product on $\ogl$ using its co-quasitriangular structure \cite{Majid}. Secondly, a form of $\oh$ can be obtained from $\ugl$ by requiring that certain special elements of $\oh$ have a formal inverse \cite{JosephLetzter, Rosso}. 

The algebras $\ugl$, $\ogl$, and $\oh$ all depend on a choice of deformation parameter $q\in\mc$. When $q$ is a root of unity, it was shown by Lusztig \cite{Lusztig} that $\ugl$ has a finite dimensional Hopf algebra quotient called the \textit{small quantum group} $\bgl$. It was later shown that $\ogl$ similarly has a finite dimensional or small quotient $\bogl$ \cite{ParshallWang, Takeuchi}.

As $\oh$ can be constructed from both $\ogl$ and $\ugl$, it is natural to ask if these constructions can be combined with the respective quotients at roots of unity, to produce a small version of $\oh$. This was considered in \cite{CookeLaugwitz}, where a presentation for the small REA $\boh$ at odd roots of unity was given by twisting the quotient relations that define $\bogl$. 

Our aim here is to reconsider this construction using certain special elements of $\ogl$ and $\oh$ called \textit{quantum minors}. This then allows us to give an alternative presentation for a small version of $\oh$ at both odd and even roots of unity. An immediate consequence of our alternative presentation is that it allows us to relate $\boh$ to the small quantum group $\bgl$:
\begin{thm*}
For $q$ a root of unity, there is an algebra isomorphism
\[
\boh\simeq\bglt.\]
\end{thm*}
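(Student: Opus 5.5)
The isomorphism will come from the Joseph--Letzter/Rosso realisation of the reflection equation algebra inside the quantum group, with the root-of-unity quotients matched on each side. Throughout, $\bglt$ means the subalgebra of $\bgl$ (or of a suitable extension of its Cartan part) generated by the $E_i$, the $F_i$, and the "doubled" Cartan elements $K_\lambda^2$. This is the image of the locally finite part of $\ugl$ under the quotient.

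\textbf{Step 1 (generic $q$).} Recall the Rosso--Joseph--Letzter map $\Phi:\oh\to\ugl$, $L\mapsto L^+S(L^-)$, built from the universal $R$-matrix. It sends the generating matrix $L=(\ell_{ij})$ of $\oh$ to products $L^+_{ik}S(L^-_{kj})$. Its image lies in the ad-locally finite subalgebra. In particular, the diagonal entries map to elements of the form $K_{\epsilon_i}^{2}\cdot(\text{unipotent corrections})$. The quantum minors of $\oh$ map to the group-likes $K^{2}_{\omega_k}$ modulo lower terms. Inverting the principal quantum minors therefore gives an isomorphism of the localisation onto the algebra generated by $E_i$, $F_i K_i$ (or $E_iK_i^{-1}$, depending on conventions), and $K_\lambda^{2}$.

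\textbf{Step 2 (descent to roots of unity).} Using the alternative presentation of $\boh$ established earlier in the paper, I will check that each defining quotient relation of $\boh$ is sent by $\Phi$ into the ideal defining $\bgl$. At odd roots the relevant relations are $\ell_{ij}^{\ell}=0$ for $i\neq j$ (suitably twisted) and the requirement that the $\ell$-th powers of the principal quantum minors equal $1$. At even roots the exponents change. These relations correspond to $E_i^{\ell}=F_i^{\ell}=0$, to $E_\alpha^\ell=0$ for non-simple roots, and to $K_\lambda^{2\ell}=1$, where $\ell$ is the order of $q^2$. The quantum minor description is what makes this check tractable. Off-diagonal powers become, up to Cartan factors, powers of root vectors, and these vanish by the $q$-binomial theorem, since the relevant $q$-commutation gives a vanishing Gaussian binomial. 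This yields a surjection $\boh\to\bglt$. Surjectivity holds because the $E_i$, the $F_i$, and the $K^{2}_{\omega_k}$ are hit, using that the minors are invertible in $\boh$.

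\textbf{Step 3 (injectivity by dimension count).} From the presentation of $\boh$, a PBW-type spanning set consists of ordered monomials in the off-diagonal generators with exponents $<\ell$, times monomials in the diagonal or minor generators modulo the $\ell$-th power relations. This bounds $\dim\boh\le \ell^{N(N-1)}\cdot|T|$, where $T$ is the finite group generated by the images of the minors. On the other side, the PBW basis of $\bgl$ shows that $\dim\bglt=\ell^{N(N-1)}\cdot|\langle K_\lambda^{2}\rangle|$, and the two group orders agree. A surjection between spaces with matching dimension bounds is then an isomorphism.

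The main obstacle is the even root of unity case in Steps 2 and 3. There, $K_\lambda^{2}$ has a different order from $K_\lambda$, and the twisted $\ell$-th power relations acquire signs. I would first verify everything for $N=2$ by explicit computation with $\bohtwo$ and $\bgltwo$, and then reduce the general case to the rank-one Levi subalgebras via the quantum minors.
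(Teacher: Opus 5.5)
Your overall strategy (the presentation of Theorem~\ref{thm: alternative presentation}, the generic embedding of $\oh$ into $\ugl$, and a dimension count) is the paper's, and at odd roots it works in outline. There $K_i=(K_i^2)^{\frac{p+1}{2}}$ and $F_i=(F_i\hat{K}_i)\hat{K}_i^{-1}$, so your ``even'' subalgebra is all of $\bgl$ and no square roots are needed; this recovers the odd case of Corollary~\ref{corr: boh simeq bgl}. Your PBW spanning-set bound replaces the paper's shortcut $\dim\boh=\dim\bogl=p^N\tp^{N^2-N}$, which comes from the twist $\phi_\ast$.

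The genuine gap is the even case. That is exactly where the superscript $(2)$ has content, and you defer it. With the exponents you write, $\ell=\operatorname{ord}(q^2)=\tp$ at even roots. Your minor relation $Z_{[k],[k]}^{\ell}=1$ is therefore not the relation $Z_{[k],[k]}^{p}=1$ of Theorem~\ref{thm: alternative presentation} (here $p=2\tp$). Your Cartan relation $K_\lambda^{2\ell}=1$ is just $K_i^{p}=1$, i.e.\ the ordinary $\bgl$. In $\bgl$ the element $K_i^2$ has order $\tp$, so $Z_{[k],[k]}\mapsto\prod_{i\le k}K_i^{-2}$ sends a group of order $p^N$ onto one of order $\tp^N$. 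Consequently the map kills $Z_{[k],[k]}^{\tp}-1\neq 0$, and your Step~3 claim that ``the two group orders agree'' is false. Your target has dimension $\tp^{N^2}$, against $\dim\boh=2^N\tp^{N^2}$. This is visible already for $N=1$: $\mathfrak{o}_q(H(1))=\mathbb{C}[z]/(z^p-1)$, whereas $z\mapsto K^{-2}$ in $\mathbb{C}[K]/(K^p-1)$ has $\tp$-dimensional image. So the problem is the choice of Cartan part, not something an explicit $N=2$ computation plus a reduction to rank-one Levi subalgebras will repair.

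The missing idea is the paper's. $Z_{[k],[k]}$ has order $p$ and maps to $\prod_{i\le k}K_i^{-2}$, so one must impose $K_i^{2p}=1$, i.e.\ work in $\bglt$. With that Cartan part, your argument would show that $\boh$ is isomorphic to the subalgebra of $\bglt$ generated by $E_i$, $F_i\hat{K}_i$, $K_i^{\pm 2}$, which has dimension $2^{-N}\dim\bglt=p^N\tp^{N^2-N}$. To get $\bglt$ itself, as the statement intends, you must adjoin the square roots $Z_{[k],[k]}^{1/2}\mapsto\prod_{i\le k}K_i^{-1}$, which have order $2p$.

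Two smaller corrections:
\begin{enumerate}
\item Your opening definition uses $F_i$, but the image of $\oh$ contains $F_i\hat{K}_i$, not $F_i$. With $F_i$ and $K_i^{2}$ you would also generate $[E_i,F_i]\propto\hat{K}_i-\hat{K}_i^{-1}$. So ``the $F_i$ are hit'' in Step~2 should read $F_i\hat{K}_i$.
\item For $\min(i,j)\ge 2$, the image of $z_{ij}$ is not a root vector times a Cartan element. For $N=3$, $z_{23}\mapsto t_{21}t_{13}+t_{22}t_{23}$, and these two summands actually commute, so no vanishing Gaussian binomial comes from $q$-commuting them. The vanishing of $z_{23}^{\tp}$ in $\bott$ appears only after normal ordering through the inhomogeneous relation $t_{13}t_{21}=q^{-1}t_{21}t_{13}+(q^{-1}-q)t_{22}t_{23}$. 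This part of Step~2 therefore needs an actual computation, or an argument that the coaction $\beta$ descends to the small quotients.
\end{enumerate}
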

Here the $(2)$ labelling $\bglt$ denotes that we have to be careful about the choice of certain relations in the definition of $\bgl$ at even roots of unity. The theorem also requires allowing square roots of certain elements in $\boh$ at even roots of unity. We note that at least for $N=2$, the form of small quantum group appearing in this theorem at even roots of unity is known to be quasitriangular (see Remark \ref{remark: quasitriangularity of Uq(sl2)} and Corollary \ref{corr: boh simeq bgl}). From this isomorphism, it is then possible to classify the irreducible representations of $\boh$.

For $q$ generic, the bounded irreducible $*$-representations of $\oh$ were classified in \cite{DCM1, DCM2, Me1}. A key component of this classification was the notion of \textit{shape invariant} associated to the REA, with every irreducible having a unique shape. More explicitly, each shape identifies a particular ideal of $\oh$, and irreducibles associated to a given shape factor through the corresponding quotient. With this in mind, it can be see that the small REA actually factors through a particular shape. It is then natural to ask if we can define similar quotients of the REA associated to other shapes. Motivated by this, we then introduce a family of algebras generalizing the small REA:
\begin{defin*}
For $q$ a root of unity, a \textbf{wee reflection equation algebra} is a finite dimensional quotient of $\oh$ that forms a comodule algebra over $\bgl$.
\end{defin*}
We focus on two families of examples of wee REAs. The first family are motivated by the \textit{big cell representations} of $\oh$ constructed in \cite{DCM1}, and can be thought of as natural generalizations of $\boh$. The second family are motivated by \textit{exact comodule algebras}. These were defined to produce exact module categories over representation categories of Hopf algebras. For $q$ an odd root of unity, the exact comodule algebras over $\bsltwo$ were classified in \cite{Mombelli}, and given explicit presentations in \cite{NSS}. We consider when these exact comodule algebras can be viewed as wee REAs.

By definition, the representation category of any wee REA will naturally form a module category of the representation category of $\bgl$. If we instead consider an appropriate representation category of $\bsl$, we can obtain a fusion category by \textit{semisimplification}. By considering a compatible notion of semisimplification for the representation category of a wee REA, we can obtain a module category over a $\usl$ fusion category. We demonstrate this by constructing the $\usltwo$ module categories $T_n$ and $D_n$ from representations of $\ohtwo$.

\subsection*{Contents}
In Section \ref{section: 1} we review the necessary background on the quantum groups $\ogl$ and $\ugl$, along with their small versions. We similarly review the necessary details of the reflection equation algebra in Section \ref{section: 2}, along with how it can be constructed from the quantum groups $\ogl$ and $\ugl$. The alternative presentation for $\boh$ is given in Section \ref{section: 3}, as well as the classification of its irreducible representations. In Section \ref{section: 4} we define wee REAs, and consider two families of examples, namely wee REAs of \textit{big cell type}, and wee REAs coming from \textit{exact comodule algebras}. Finally in Section \ref{section: 5} we consider semisimplification of module categories, and show how to construct the $\Rep\usltwo$ modules categories $T_n$ and $D_n$ by semisimplification.

\subsection*{Notation}
For $q\in\mc\setminus\{\pm 1\}$ a root of unity, we denote by $p\in\mn$ the smallest number such that $q^p=1$. We denote by $\tp\in\mn$ the smallest number such that $q^{\tp}=\pm 1$. If $q^{\tp}=-1$, we call $q$ an \textit{even root of unity}, and if $q^{\tp}=1$, we call $q$ an \textit{odd root of unity}.

Let $A$ and $B$ be totally ordered sets, and let $\sigma:A\rightarrow B$ be a bijection. We denote by
\[
l(\sigma):=\{(i,j)\in A\times A \lvert i<j, \sigma(i)>\sigma(j)\}\]
the number of inversions of $\sigma$. We will often denote certain sets as follows:
\[
[k]:=\{1,2,...,k\}.\]

\section{Background}\label{section: 1}

\subsection{The $\hr$-matrix}
For $q\in\mc$ and $N\in\mn$ we define the $N\times N$ matrix
\[
\hr:= \sum\limits_{i,j} q^{-\delta_{ij}}e_{ji}\otimes e_{ij} + (q^{-1}-q)\sum\limits_{i<j}e_{jj}\otimes e_{ii}.\]
This operator satisfies the \textit{braid relation}
\[
\hr_{12}\hr_{23}\hr_{12} = \hr_{23}\hr_{12}\hr_{23},\]
where $\hr_{ij}$ denotes $\hr$ acting on the $i$th and $j$th copies of $\mc^N\otimes \mc^N\otimes \mc^N$.

It has inverse given by
\[
\hr^{-1} = \sum\limits_{i,j}q^{\delta_{ij}}e_{ij}\otimes e_{ji}+(q-q^{-1})\sum\limits_{i<j}e_{ii}\otimes e_{jj}.\]

\subsection{ $\omn$, $\ogl$ and $\bogl$}
We generally follow \cite{KlimykSchmudgen} or \cite{ParshallWang}.

Let $X$ denote the $N\times N$ matrix with variables $(x_{ij})_{1\leq i,j\leq N}$. $\omn$ is the unital bialgebra generated by the elements $x_{ij}$, $1\leq i,j\leq N$, with relations given by
\[
\hr_{12}X_{13}X_{23} = X_{13}X_{23}\hr_{12}.\]
The coproduct and counit on $\omn$ is given by
\[
(\id\otimes\Delta)X = X_{12}X_{13}, \qquad (\id\otimes\varepsilon)X = I_N.\]
$\omn$ contains a unique (group-like) central element $D_q\in\omn$, called the \textit{quantum determinant}, which is given by
\[
D_q:= \sum\limits_{\sigma\in S_N}(-q)^{l(\sigma)}x_{1,\sigma(1)}...x_{N,\sigma(N)}\]
If we require the quantum determinant to have a formal inverse $D_q^{-1}$, then the matrix $X$ becomes invertible, and $\omn[D_q^{-1}]$ will become a Hopf algebra, which we denote by $\ogl$. The antipode of $\ogl$ is given by 
\[
S(x_{ij}):=(X^{-1})_{ij}.\]
To describe this more explicitly, we need the following:
\begin{defin}
Given $I,J\subseteq[N]$, $I=\{i_1,...,i_k\}$, $J=\{j_1,...,j_k\}$, where the elements of the sets are ordered from smallest to largest, we define the \textbf{quantum minor} $X_{IJ}\in\ogl$ by
\begin{gather}\label{eq: quantum minor expansion}
X_{\{i_1,i_2\},\{j_1,j_2\}} := x_{i_1 j_1}x_{i_2j_2}-qx_{i_1j_2}x_{i_2j_1}, \\	
X_{IJ} := \sum\limits_{1\leq l\leq k}(-q)^{l-1}x_{i_1,j_l}X_{\{i_2,...,i_k\},J\setminus\{j_l\}}.
\end{gather}
\end{defin}
The quantum determinant is then just a particular quantum minor, $D_q=X_{[N],[N]}$. We note that quantum minors are defined using an expansion formula in terms of lower rank quantum minors. There are in fact multiple ways to expand a quantum minor in terms of lower rank ones, and a more general formula for the expansion can be seen for example in \cite{ParshallWang}[Theorem 4.4.3] (or \cite{DCM1}[Proposition 2.8] for our notation).

The quantum minors of a rixed rank form the basis of an irreducible $\omn$ comodule. The coproduct on quantum minors is given by
\begin{equation}
\Delta(X_{IJ}) = \sum\limits_{\substack{K\subseteq[N] \\ \lvert K\rvert = \lvert I\rvert}}X_{IK}\otimes X_{KJ}.
\end{equation}
Explicitly, the antipode on $\ogl$ is given in terms of the quantum minors by
\begin{gather}\label{eqn: ogl antipode}
S(x_{ij})=(-q)^{i-j}X_{[N]\setminus\{j\},[N]\setminus\{i\}}D_q^{-1}.
\end{gather}


\subsubsection{Small $\ogl$}
A general construction of $\bogl$ for odd roots of unity was given in \cite{ParshallWang}[Section 7], and a construction for even roots of unity follows similarly:

At roots of unity, certain elements in $\ogl$ become central, and the small quantum group $\bogl$ is then a Hopf algebra quotient by these central elements. To describe this, we first need to consider general commutation relations in $\bogl$. For $x_{ij},x_{kl}\in\ogl$, if $ (i,j)=(l,k)$, or more generally $i<k$ and $j>l$, then
\[
x_{ij}x_{kl}=x_{kl}x_{ij}.\]
For the cases where the indices do not satisfy the above conditions, the general form of the commutation relation is given by
\begin{equation}\label{eqn: ogl commutation relations}
\begin{split}
q^{n\delta_{jl}(\delta_{i>k}-\delta_{k>i})+n\delta_{ik}(\delta_{j>l}-\delta_{l>j})}x_{ij}^nx_{kl}-x_{kl}x_{ij}^n = (q-q^{1-2n})\left(\delta_{k>i}\delta_{l>j}-q^{2n-2}\delta_{i>k}\delta_{j>l}\right)x_{ij}^{n-1}x_{il}x_{kj},
\end{split}
\end{equation} 
for a choice of $n\in\mn$. From this, we can see that for odd roots of unity, the elements $x_{ij}^{p}$ will be central in $\ogl$. 
\begin{defin}
	For odd roots of unity, the \textbf{small quantum group} $\bogl$ is defined as the quotient of $\ogl$ by the relations
	\[
	x_{ij}^p = \delta_{ij}.\]
\end{defin}
For even roots of unity, the situation is slightly more complicated. In this case, taking $n= \tp$, we see that the elements $x_{ij}^{\tp}$ will in general anti-commute in $\ogl$. Hence, to ensure the quotient is compatible with this, we instead define the small quantum group as follows:
\begin{defin}\label{def: bogl}
For even roots of unity, the \textbf{small quantum group} $\bogl$ is defined as the quotient of $\ogl$ by the relations
\[
x_{ii}^{p}=1, \qquad x_{ij}^{\tp}=0 \text{ for }i\neq j.\]
\end{defin}
For the Hopf algebra structure, \cite{ParshallWang}[Lemma 7.2.2] generalizes similarly for even roots of unity as follows:
\[
\Delta(x_{ij}^{\tp}) = \sum\limits_{r=1}^{N}x_{ir}^{\tp}\otimes x_{rj}^{\tp}, \qquad \varepsilon(x_{ij}^{\tp}) = \delta_{ij},\] 
so the ideal defined by the relations in Definition \ref{def: bogl} can be seen to form a bi-ideal. To check that it is in fact a Hopf ideal requires the following identity from \cite{ParshallWang}[Lemma 7.2.3]. We note that again their proof can be seen to also hold at even roots of unity:
\begin{prop}\label{prop: power of quantum determinant, minor expansion}
In $\omn$ for $q$ a root of unity, the quantum determinant satisfies
\[
D_q^{\tp} = \sum\limits_{i=1}^{N}(\lambda)^{i-1}x_{Ni}^{\tp}X_{[N-1],[N]\setminus\{i\}}^{\tp},\]
where $\lambda=1$ if $q$ is an even root of unity and $\tp$ is odd, and $\lambda=-1$ otherwise.
\end{prop}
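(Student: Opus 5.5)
We prove the identity by expanding $D_q$ along its last row and taking the $\tp$-th power via a $q$-binomial theorem at a root of unity. First, by the general Laplace expansion of quantum minors (\cite{ParshallWang}[Theorem 4.4.3]), we expand $D_q$ along the last row:
\[
D_q=\sum_{i=1}^N(-q)^{i-N} y_i,\qquad y_i:=x_{Ni}X_{[N-1],[N]\setminus\{i\}}
\]
(up to the exact placement of the power of $-q$). The aim is then to compute $D_q^{\tp}=\bigl(\sum_i c_i y_i\bigr)^{\tp}$.

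The key structural input is that the summands $y_i$ pairwise $q$-commute, with a constant factor. I would derive this from the commutation relations \eqref{eqn: ogl commutation relations}, together with the standard relations between a generator $x_{Ni}$ and a quantum minor of the complementary rows. Concretely, I expect a relation of the form $y_iy_j=q^{2}y_jy_i$ (or $q^{-2}$) for $i<j$. This is the same mechanism Parshall–Wang use in \cite{ParshallWang}[Lemma 7.2.3].

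Given such relations, I would apply the $q$-binomial theorem to the ordered sum $u_1+\dots+u_N$ with $u_iu_j=Q\,u_ju_i$ for $i<j$, where $Q=q^{\pm2}$. Since $Q$ has order exactly $\tp$, the multinomial $Q$-coefficients $\binom{\tp}{a_1,\dots,a_N}_Q$ vanish unless a single $a_i$ equals $\tp$. Hence
\[
D_q^{\tp}=\sum_i c_i^{\tp}y_i^{\tp}.
\]
Next I would show $y_i^{\tp}=\mu\,x_{Ni}^{\tp}X_{[N-1],[N]\setminus\{i\}}^{\tp}$. The two factors $q$-commute, with a factor $q^{\pm1}$ depending on whether $i$ lies in the column set, so reordering produces a scalar of the form $q^{\pm\binom{\tp}{2}}$.

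Finally, the coefficient $c_i^{\tp}=(-q)^{(i-N)\tp}$ must be combined with this reordering scalar. Since $q^{\tp}=\pm1$, this gives $\bigl((-1)^{\tp}q^{\tp}\bigr)^{i-N}$, times sign factors arising from $q^{\binom{\tp}{2}}$. One checks that $(-1)^{\tp}q^{\tp}$ equals $1$ exactly when $q$ is even and $\tp$ is odd, and equals $-1$ otherwise, which is the quantity $\lambda$. Normalising so that the $i=1$ term has coefficient $1$ yields $\lambda^{i-1}$; this requires tracking the global constant.

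The main obstacle is the middle step: establishing the precise $q$-commutation of the $y_i$ and of $x_{Ni}$ with the complementary minor, including the exact powers. If the $y_i$ only $q$-commute up to lower terms, I would instead follow Parshall–Wang's inductive argument, reducing to $N=2$ via their quantum-minor identities. That argument is valid verbatim at even roots, since it uses only that $q^2$ is a primitive $\tp$-th root of unity. The sign bookkeeping is routine but must be done carefully, because at even roots it is exactly what produces $\lambda$.
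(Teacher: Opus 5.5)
There is a genuine gap: the commutation relations your collapse depends on are false.

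Already for $N=2$ we have $D_q=y_2-q^{-1}y_1$ with $y_1=x_{21}x_{12}$ and $y_2=x_{22}x_{11}$. The element $x_{12}x_{21}$ $q^{2}$-commutes with $x_{11}$ and $q^{-2}$-commutes with $x_{22}$, so $y_1$ and $y_2$ \emph{commute} rather than $q^{\pm 2}$-commute. Hence $(y_2-q^{-1}y_1)^{\tp}$ expands with ordinary binomial coefficients $\binom{\tp}{k}\neq 0$, and no cross term drops out.

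Nor do the factors of $y_N$ $q$-commute. Since $x_{11}x_{22}-x_{22}x_{11}=(q-q^{-1})x_{12}x_{21}$, the power $y_2^{\tp}$ is not a scalar multiple of $x_{22}^{\tp}x_{11}^{\tp}$. The identity holds only because these two errors cancel each other. For example, at $q=i$ one has $D_q^2=y_2^2-2q^{-1}y_1y_2+q^{-2}y_1^2$ and $y_2^2=x_{22}^2x_{11}^2+(q-q^{-1})q^{-2}y_1y_2$. The two coefficients of $y_1y_2$ are $2i$ and $-2i$: each is nonzero, and only their sum vanishes.

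For $N\geq 3$ the failure is not confined to $i=N$. For instance, $X_{12,13}x_{32}-x_{32}X_{12,13}=(q-q^{-1})X_{12,23}x_{31}$. The $q^2$-commuting-summands mechanism does work for $\Delta(x_{ij})=\sum_r x_{ir}\otimes x_{rj}$, whose summands genuinely $q^2$-commute. The paper uses the same mechanism for $x_{NN}^{2\tp}$. It does not apply to Laplace summands.

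What does work for $N=2$ is the following. Write $x_{22}x_{11}=D_q+q^{-1}x_{12}x_{21}$, where $D_q$ is central and $x_{22}(x_{12}x_{21})=q^{-2}(x_{12}x_{21})x_{22}$. By induction, $x_{22}^{k}x_{11}^{k}=\prod_{j=0}^{k-1}(D_q+q^{-2j-1}x_{12}x_{21})$, a product of commuting factors. Since $q^{-2}$ is a primitive $\tp$-th root of unity, at $k=\tp$ this product equals $D_q^{\tp}-(-1)^{\tp}q^{-\tp}x_{12}^{\tp}x_{21}^{\tp}$. That is, $D_q^{\tp}=x_{22}^{\tp}x_{11}^{\tp}+\lambda\,x_{21}^{\tp}x_{12}^{\tp}$, and $\lambda=(-1)^{\tp}q^{\tp}$ emerges by itself.

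For general $N$ you need an inductive argument of this kind. The paper gives no argument of its own: it defers to Parshall--Wang, Lemma 7.2.3, asserting that the proof survives at even roots. Your fallback sentence points the same way, but as written it is a citation, not a proof.

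Finally, you cannot ``normalise'' the global constant, because it is forced. The $N=2$ computation above gives coefficient $\lambda^{N-i}$ on the $i$-th term. This agrees with the displayed $\lambda^{i-1}$ only up to the overall factor $\lambda^{N-1}$. So for even $N$ with $\lambda=-1$ you must track the sign honestly rather than adjust it to match the display.
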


A consequence of Proposition \ref{prop: power of quantum determinant, minor expansion} is the following:
\begin{prop}\label{prop:power of small quantum determinant}
In $\bogl$, the quantum determinant satisfies $D_q^{p}=1$.
\end{prop}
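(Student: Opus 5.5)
We argue by induction on $N$, using Proposition \ref{prop: power of quantum determinant, minor expansion} to relate $D_q^{\tp}$ to $\tp$-th powers of generators and of minors of rank $N-1$. Throughout we use that $p=\tp$ for odd roots of unity and $p=2\tp$ for even ones.

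\emph{Odd case.} Here $x_{ij}^p=\delta_{ij}$ in $\bogl$. Applying the proposition (with its sign convention $\lambda$) gives
\[
D_q^{p}=\sum_i \lambda^{i-1}x_{Ni}^{p}X_{[N-1],[N]\setminus\{i\}}^{p}.
\]
Since $x_{Ni}^{p}=\delta_{Ni}$, only the term $i=N$ survives, and
\[
D_q^p=\lambda^{N-1}X_{[N-1],[N-1]}^p .
\]

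The minor $X_{[N-1],[N-1]}$ is the quantum determinant of the subalgebra generated by $x_{ij}$ with $i,j\le N-1$. These generators satisfy the $\mathcal{O}_q(M_{N-1})$ relations, and Proposition \ref{prop: power of quantum determinant, minor expansion} holds in $\mathcal{O}_q(M_{N-1})$. So the same expansion applies to it, and induction gives $X_{[N-1],[N-1]}^p=\lambda^{N-2}\cdots$. Iterating down to $N=1$, where $x_{11}^p=1$, we get $D_q^p=\pm1$ with an explicit sign.

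The sign is harmless in the odd case, where $\lambda=-1$. Each step only looks at the diagonal term $i=N$, and I expect the sign factor should really be $\lambda^{N-i}$ or similar, so that the diagonal term has coefficient $1$. I would recheck the sign convention in the expansion carefully, for instance by testing $N=2$ directly: there $D_q^p=x_{11}^px_{22}^p-(\dots)x_{12}^px_{21}^p$, and the off-diagonal term vanishes.

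A cleaner route for the diagonal term is available. The diagonal term comes from $\varepsilon$-type considerations: the quotient map to the ``diagonal'' part sends $D_q\mapsto x_{11}\cdots x_{NN}$, and that part is commutative. One can also use the counit, since $\varepsilon(D_q)=1$ and $D_q^p$ is grouplike. However, a grouplike element need not be $1$, so the counit alone is not enough, and the expansion is genuinely needed.

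\emph{Even case.} Now $x_{ij}^{\tp}=0$ for $i\ne j$. The same expansion gives
\[
D_q^{\tp}=\lambda^{N-1}x_{NN}^{\tp}X_{[N-1],[N-1]}^{\tp}.
\]
Recursively,
\[
D_q^{\tp}=\pm\, x_{NN}^{\tp}\,X_{[N-1],[N-1]}^{\tp},
\]
and iterating yields $D_q^{\tp}=\pm\, x_{NN}^{\tp}x_{N-1,N-1}^{\tp}\cdots x_{11}^{\tp}$, with each partial product taken as the $\tp$-th power of a nested minor.

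Squaring, $D_q^{p}=(D_q^{\tp})^2$. To handle this I need the ordering and commutation of these factors. I would use the earlier commutation relation (\ref{eqn: ogl commutation relations}) with $n=\tp$: since $q^{\tp}=-1$, the elements $x_{ii}^{\tp}$ and $X_{[N-1],[N-1]}^{\tp}$ commute up to sign, and the right-hand side involves $x_{ij}$ with $i\neq j$ raised to suitable powers, or vanishes.

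The \textbf{main obstacle} is exactly this step: showing that $x_{NN}^{\tp}$ and $X_{[N-1],[N-1]}^{\tp}$ commute, possibly up to a sign that disappears after squaring, in the quotient. Once that is established, we get
\[
(D_q^{\tp})^2=(x_{NN}^{\tp})^2\,(X_{[N-1],[N-1]}^{\tp})^2=x_{NN}^{p}\,X_{[N-1],[N-1]}^{p}=X_{[N-1],[N-1]}^{p},
\]
and induction finishes the proof, with base case $x_{11}^p=1$.

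If this commutation is hard to establish directly, I would instead argue modulo the ideal generated by the off-diagonal $x_{ij}^{\tp}$. In that quotient the $x_{ii}$ commute with the minors modulo terms containing off-diagonal generators. Alternatively I would use (\ref{eqn: ogl commutation relations}) with $i=k$ and $n=\tp$, which gives $x_{ii}^{\tp}x_{il}=\pm x_{il}x_{ii}^{\tp}$; this propagates to $q$-commutation of $x_{NN}^{\tp}$ with every generator $x_{ij}$, $i,j<N$, since $x_{NN}$ commutes with those by the stated rule for $i<k$, $j>l$ or via the relation. That makes $x_{NN}^{\tp}$ commute with $X_{[N-1],[N-1]}$ up to an overall sign.
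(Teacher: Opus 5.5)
Your route is the one the paper intends: expand $D_q^{\tp}$ with Proposition \ref{prop: power of quantum determinant, minor expansion}, kill the off-diagonal terms with the quotient relations, and induct on $N$ through the subalgebra generated by the $x_{ij}$ with $i,j\le N-1$. The paper itself says only that the statement is ``a consequence of'' that proposition. However, the two sign steps that carry the argument are not settled, and one of your claims is false.

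\textbf{Even case.} You say a sign ``disappears after squaring''. It does not: if $ab=-ba$ then $(ab)^2=-a^2b^2$, which would give $D_q^p=-X_{[N-1],[N-1]}^p$. So you need genuine commutation of $x_{NN}^{\tp}$ with $X_{[N-1],[N-1]}^{\tp}$, and your justification for it is wrong. The element $x_{NN}$ itself does not commute with $x_{kl}$ for $k,l<N$; for instance $x_{11}x_{NN}-x_{NN}x_{11}=(q-q^{-1})x_{1N}x_{N1}$. The rule for $i<k$, $j>l$ does not cover this pair, and the same-row case $i=k$ is irrelevant. The missing step is to apply (\ref{eqn: ogl commutation relations}) with $(i,j)=(N,N)$, $k,l<N$ and $n=\tp$. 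The $q$-power on the left is $q^0$ because $\delta_{jl}=\delta_{ik}=0$. The right-hand side carries the factor $q-q^{1-2\tp}$, which is $0$ because $q^{2\tp}=1$. Hence $x_{NN}^{\tp}x_{kl}=x_{kl}x_{NN}^{\tp}$ exactly, already in $\omn$. Therefore $x_{NN}^{\tp}$ commutes with $X_{[N-1],[N-1]}^{\tp}$, and $(D_q^{\tp})^2=x_{NN}^{p}X_{[N-1],[N-1]}^{p}=X_{[N-1],[N-1]}^{p}$, so the induction closes.

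\textbf{Odd case.} Here you have it backwards. The coefficient of the diagonal term is squared away in the even case, but in the odd case it is the whole content. With the expansion as printed ($\lambda^{i-1}$, $\lambda=-1$), the $i=N$ term carries $(-1)^{N-1}$. You would then get $D_q^p=(-1)^{N-1}X^p_{[N-1],[N-1]}$ and, inductively, $D_q^p=(-1)^{N(N-1)/2}$, which is $-1$ for $N=2$. Saying you expect the coefficient to be $1$ and would test $N=2$ does not prove it for general $N$. Your torus remark does settle it once made precise. The assignment $x_{ij}\mapsto\delta_{ij}t_i$ defines an algebra map $\omn\to\mc[t_1,\dots,t_N]$, since the defining relations reduce to commutativity of the $t_i$. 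It sends $D_q\mapsto t_1\cdots t_N$ and $X_{[N-1],[N-1]}\mapsto t_1\cdots t_{N-1}$, and it kills every term with $i\neq N$. Comparing images of the two sides of the expansion forces the $i=N$ coefficient to equal $1$, whatever the off-diagonal signs are. With this and the exact commutation above, your induction proves $D_q^p=1$ in both cases.
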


Combining Proposition \ref{prop: power of quantum determinant, minor expansion} with Equation \ref{eqn: ogl antipode}, we now see that the bi-ideal defined by the relations in Definition \ref{def: bogl} is closed under the antipode, and therefore a Hopf ideal. Hence for all roots of unity $\bogl$ is a finite dimensional Hopf algebra. Further, for $q$ an odd root of unity, $\bogl$ has dimension $p^{N^2}$, whilst for even roots of unity it has dimension $2^N\tp^{N^2}$.

\subsection{$\ot$ and $\bott$}
Let $T^u$ denote the matrix of variables $(t_{ij})_{1\leq i\leq j\leq N}$. Let $T^l$ denote the matrix with variables $(t_{ij})_{1\leq j\leq i\leq N}$. The algebra $\ot$ is generated by the elements $t_{ij}$, $1\leq i,j\leq N$, along with formal inverses $t_{ii}^{-1}$, with relations given by
\[
\hr_{12}T^u_{13}T^u_{23} = T^u_{13}T^u_{23}\hr_{12}, \qquad \hr_{12}T^l_{23}T^l_{13} = T^l_{23}T^l_{13}\hr_{12}, \qquad T^u_{23}\hr_{12}T^l_{23} = T^l_{13}\hr_{12}T^u_{13}.\]
$\ot$ becomes a Hopf algebra with coproduct and counit given by 
\[
(\id\otimes\Delta)T^u = T^u_{12}T^u_{13}, \quad (\id\otimes\Delta)T^l = T^l_{13}T^l_{12}, \quad (\id\otimes\varepsilon)T^u = I_N, \quad (\id\otimes\varepsilon)T^l = I_N.\]
We denote by $\otu$ the Hopf subalgebra of $\ot$ generated by $T^u$, i.e. the elements $t_{ij}$ with $1\leq i\leq j\leq N$, and the inverses $t_{ii}^{-1}$. $\otl$ is defined similarly.
$\otu$ can be viewed as a quotient Hopf algebra of $\ogl$ given by
\[
x_{ij}\mapsto t_{ij} \text{ for }i\leq j, \quad x_{kl}\mapsto 0 \text{  for  } k>l,\] 
and it can be seen to inherit its Hopf algebra structure from this quotient. For $q$ generic, the relations on $\otl$ can be obtained by applying the $*$-structure 
\[
t_{ij}^* = t_{ji},\]
to $\otu$. 

\subsubsection{Small $\ot$}

From relation \ref{eqn: ogl commutation relations}, we can see that the quotients 
\[
\ogl\rightarrow\bogl ~ \text{ and } ~ \ogl\rightarrow\otu \]
are compatible, and hence we can define finite dimensional quotient Hopf algebras $\botu$ and $\botl$ by
\[
t_{ii}^{p}=1, \qquad t_{ij}^{\tp}=0, ~ i\neq j.\]
We now want to verify that we can extend this to a quotient Hopf algebra structure on $\ot$. For $i<j$, $k>l$, and $n\in\mn$, the general form of commutation relation is given by
\begin{equation}
	\begin{split}
t_{ij}^nt_{kl}-q^{n(\delta_{jk}-\delta_{il})}t_{kl}t_{ij}^n =& \delta_{il}q^{\delta_{jk}-1}(q^{2-2n}-q^2)t_{ij}^{n-1}\left(\sum\limits_{m=i+1}^{\min\{j,k\}}t_{km}t_{mj}\right) \\
+& \delta_{jk}(q^{2n}-1)t_{ij}^{n-1}\left(\sum\limits_{m=\max\{i,l\}}^{j-1}t_{im}t_{ml}\right) \\
+& \delta_{il}\delta_{jk}(q^{2n}-q^2-1+q^{2-2n})t_{ij}^{n-2}\left(\sum\limits_{i+1=s\leq m}^{j-1}t_{im}t_{ms}t_{sj}\right),
\end{split} 
\end{equation}
where we note we can obtain the relation for $i>j$ and $k<l$ by applying the $*$-structure.

From this, we see that again the elements $t_{ij}^{\tp}$ will either commute or anti-commute, and hence the corresponding quotient of $\ot$ will form a finite dimensional Hopf algebra.
\begin{defin}
We define $\bott$ to be the finite dimensional Hopf algebra obtained by quotienting $\ot$ by the relations
\[
t_{ii}^p=1, \qquad t_{ij}^{\tp}=0 ~ \text{ for } ~ i\neq j.\]
\end{defin}
Under the the quotient map $\ogl\rightarrow\otu$, $x_{ij}\mapsto\delta_{i\leq j}t_{ij}$, in terms of the quantum determinant, we have
\[
D_q\mapsto\prod\limits_{i=1}^{N}t_{ii}.\]
The image of $D_q$ is again central and group-like in $\ot$.

For even roots of unity, we will sometimes need to consider larger quotient algebras, so we introduce the following generalization:
\begin{defin}\label{def: botk}
For $q$ an even root of unity and $k\in\mn$, we define $\botk$ to be the finite dimensional Hopf algebra quotient of $\ot$ defined by
\[
t_{ii}^{kp}=1, \qquad t_{ij}^{\tp}=0 ~ \text{ for } ~ i\neq j.\]
\end{defin}

\subsection{$\ugl$, $\usl$ $\bgl$, and $\bsl$}
The Hopf algebra $\ugl$ in generated by the elements $E_j,F_j,K_i$, $1\leq j\leq N-1$, $1\leq i\leq N$, with relations
\[
K_iE_j = q^{\delta_{ij}-\delta_{i,j+1}}E_jK_i, \quad K_iF_j = q^{\delta_{i,j+1}-\delta_{ij}}F_jK_i, \quad E_jF_k - F_k E_j = \delta_{jk}\frac{K_jK_{j+1}^{-1}-K_{j+1}K_j^{-1}}{q-q^{-1}},\]
\[
E_i^2E_{i\pm 1}-(q+q^{-1})E_iE_{i\pm 1}E_i+E_{i\pm 1}E_i^2 = 0, \quad F_i^2F_{i\pm 1}-(q+q^{-1})F_iF_{i\pm 1}F_i+F_{i\pm 1}F_i^2 = 0,\]
\[
E_jE_k=E_kE_j, \quad F_jF_k = F_kF_j\quad \text{if} \quad \lvert j-k\rvert >1.\]
The Hopf algebra structure is given by
\[
\Delta(K_i) = K_i\otimes K_i, \quad \Delta(E_j) = 1\otimes E_j+E_j\otimes K_jK_{j+1}^{-1}, \quad \Delta(F_j) = F_j\otimes 1+K_{j+1}K_j^{-1}\otimes F_j,\]
\[
\varepsilon(K_i)=1, \quad \varepsilon(E_j) = \varepsilon(F_j)=0,\]
\[
S(K_i)=K_i^{-1}, \quad S(E_j)=-E_jK_{j+1}K_j^{-1}, \quad S(F_j) = -K_jK_{j+1}^{-1}F_j.\]
The Hopf algebra $\usl$ is the sub-algebra of $\ugl$ generated by $E_i,F_i$, $\hat{K}_i:=K_iK_{i+1}^{-1}$, $1\leq i\leq N-1$.

For the following, see for example \cite{KlimykSchmudgen}[Theorem 8.33]:
\begin{thm}\label{thm: ugl simeq ot}
There is a Hopf algebra isomorphism $\ugl^{\text{cop}}\simeq\ot$, with
\[
t_{ii}\mapsto K_{i}^{-1}, \qquad t_{i+1,i}\mapsto (1-q^2)K_i^{-1}E_i, \qquad t_{i,i+1}\mapsto(q^{-1}-q)F_iK_{i+1}^{-1}.\]
\end{thm}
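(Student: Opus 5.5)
The plan is to construct an explicit Hopf algebra map $\psi:\ugl^{\text{cop}}\rightarrow\ot$ as the inverse of the stated assignment:
\[
K_i\mapsto t_{ii}^{-1}, \qquad E_i\mapsto (1-q^2)^{-1}t_{ii}t_{i+1,i}, \qquad F_i\mapsto (q^{-1}-q)^{-1}t_{i,i+1}t_{i+1,i+1}.
\]
I would then show that $\psi$ is bijective.

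\textbf{Well-definedness of $\psi$.} I would check each defining relation of $\ugl$ on the images, reading them off by comparing matrix coefficients in the three relations defining $\ot$.
\begin{itemize}
\item The relations $K_iE_j=q^{\delta_{ij}-\delta_{i,j+1}}E_jK_i$ (and the analogue for $F_j$) come from the entries of $\hr_{12}T^l_{23}T^l_{13}=T^l_{23}T^l_{13}\hr_{12}$ (resp. $T^u$) that pair a diagonal $t_{ii}$ with $t_{j+1,j}$. These are of the form $t_{ii}t_{kl}=q^{\pm(\delta_{ik}-\delta_{il})}t_{kl}t_{ii}$.
\item The relation $[E_j,F_k]=\delta_{jk}(\hk_j-\hk_j^{-1})/(q-q^{-1})$ comes from the mixed relation $T^u_{23}\hr_{12}T^l_{23}=T^l_{13}\hr_{12}T^u_{13}$. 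Its $(j,j+1),(j+1,j)$ entries give $t_{j,j+1}t_{j+1,j}-t_{j+1,j}t_{j,j+1}$ as a multiple of $t_{jj}t_{j+1,j+1}^{-1}-t_{j+1,j+1}t_{jj}^{-1}$, after using the diagonal commutations to move the $t_{ii}$ factors.
\item The far-commutativity relations come from the commutation relations of $\otu$ and $\otl$ (e.g. \eqref{eqn: ogl commutation relations} pulled back along $\ogl\rightarrow\otu$).
\item The $q$-Serre relations require introducing $t_{i,i+2}$ (resp. $t_{i+2,i}$). The RTT relations give
\[
t_{i,i+1}t_{i+1,i+2}-q^{\pm1}t_{i+1,i+2}t_{i,i+1}=(q-q^{-1})\,t_{i+1,i+1}t_{i,i+2},
\]
together with $q$-commutation of $t_{i,i+2}$ with both $t_{i,i+1}$ and $t_{i+1,i+2}$. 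Substituting the first identity into the second gives the cubic Serre relation.
\end{itemize}
For the coalgebra structure, I would check on generators that $\Delta^{\text{cop}}$ matches. For example, $(\id\otimes\Delta)T^u=T^u_{12}T^u_{13}$ gives $\Delta(t_{i,i+1})=t_{ii}\otimes t_{i,i+1}+t_{i,i+1}\otimes t_{i+1,i+1}$, which is the opposite of $\Delta(F_iK_{i+1}^{-1})$ up to the scalar. Antipode compatibility is then automatic for a bialgebra map between Hopf algebras.

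\textbf{Surjectivity.} The identity displayed above, and its iterates, express every $t_{ij}$ with $|i-j|>1$ as a polynomial in the $t_{k,k+1}$ (resp. $t_{k+1,k}$) and the $t_{kk}^{\pm1}$. Hence $\ot$ is generated by the image of $\psi$.

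\textbf{Injectivity.} This is the main obstacle. I would compare bases on both sides. On the $\ugl$ side there is the PBW basis in root vectors $E_\beta,F_\beta,K^{\mathbf{a}}$. On the $\ot$ side, the relations let me rewrite any word into ordered monomials
\[
\prod t_{ij}^{a_{ij}}\,(i<j)\cdot\prod t_{ii}^{c_i}\cdot\prod t_{ij}^{b_{ij}}\,(i>j),
\]
so these monomials span $\ot$. Under $\psi$, the Lusztig root vectors $E_{\beta_{ij}}$ map to nonzero multiples of $t_{jj}t_{ij}$ plus lower-order terms. Ordered PBW monomials therefore map triangularly onto ordered $t$-monomials.

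It then suffices to know that the ordered $t$-monomials are linearly independent. I would first try to get this from the independence of the PBW basis together with the triangularity, arguing that $\psi$ maps a basis onto a spanning set by a unitriangular change of variables. If that is circular, I would construct the inverse map $\phi:\ot\rightarrow\ugl^{\text{cop}}$ directly via FRT $L$-operators. Here $T^u$ and $T^l$ are sent to $(\id\otimes\pi)$ applied to $\mathcal{R}_{21}$ and $\mathcal{R}^{-1}$ in the vector representation $\pi$. The relations of $\ot$ then follow from the quantum Yang--Baxter equation, and $\phi\circ\psi=\id$ is checked on generators.
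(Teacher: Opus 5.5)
\textbf{Context.} The paper gives no proof of this statement. It cites \cite[Theorem 8.33]{KlimykSchmudgen}, where the isomorphism comes from the FRT/$L$-functional construction. Your outline (explicit map, then bijectivity) is the standard one, but several concrete steps fail as written.

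\textbf{The inverse formulas are wrong.} Your $\psi$ is not the inverse of the stated assignment. Since $K_i\mapsto t_{ii}^{-1}$, inverting gives $\psi(E_i)=(1-q^2)^{-1}t_{ii}^{-1}t_{i+1,i}$ and $\psi(F_i)=(q^{-1}-q)^{-1}t_{i,i+1}t_{i+1,i+1}^{-1}$. With your exponents, $\psi$ is not even a homomorphism:
\begin{itemize}
\item The mixed relation actually reads $t_{i,i+1}t_{i+1,i}-t_{i+1,i}t_{i,i+1}=(q^2-1)(t_{ii}^2-t_{i+1,i+1}^2)$. Your $[\psi(E_i),\psi(F_i)]$ therefore comes out as a multiple of $t_{ii}^3t_{i+1,i+1}-t_{ii}t_{i+1,i+1}^3$, not $\psi\bigl((\hk_i-\hk_i^{-1})/(q-q^{-1})\bigr)$.
\item $t_{ii}t_{i+1,i}$ is $(t_{ii}^2,t_{ii}t_{i+1,i+1})$-skew-primitive, whereas $\psi(E_i)$ must have one leg equal to $1$.
\end{itemize}
With the corrected exponents, both checks go through.

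\textbf{The coalgebra check fails as stated.} You assert the "cop" compatibility without computing it, and it is false. With the conventions printed in the paper, $\Delta(t_{i,i+1})=t_{ii}\otimes t_{i,i+1}+t_{i,i+1}\otimes t_{i+1,i+1}$ corresponds to $(q^{-1}-q)\Delta(F_iK_{i+1}^{-1})=(q^{-1}-q)(K_i^{-1}\otimes F_iK_{i+1}^{-1}+F_iK_{i+1}^{-1}\otimes K_{i+1}^{-1})$ itself, not to its flip. Likewise $T^l_{13}T^l_{12}$ gives $\Delta(t_{i+1,i})=t_{ii}\otimes t_{i+1,i}+t_{i+1,i}\otimes t_{i+1,i+1}$, which corresponds to $(1-q^2)\Delta(K_i^{-1}E_i)$. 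So these formulas intertwine $\Delta_{\ot}$ with the coproduct of $\ugl$, not of $\ugl^{\text{cop}}$. Before anything can be verified, you must fix a consistent set of conventions (presumably those of Klimyk--Schm\"udgen). The paper, relying on the citation, never does this.

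\textbf{Injectivity.} Your triangularity argument is circular in the direction you chose. Knowing that PBW monomials map unitriangularly onto ordered $t$-monomials gives injectivity only if the ordered $t$-monomials are already known to be linearly independent in $\ot$. A presentation by generators and relations gives no such lower bound. What closes the argument is a homomorphism \emph{out of} $\ot$, defined on every $t_{ij}$. Then ordered $t$-monomials map triangularly onto PBW monomials of $\ugl$, so they are independent and the map is bijective. That is exactly what the FRT construction supplies, so it should be the proof rather than a fallback.

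\textbf{Setting up the FRT map.} Note that $\ot$ has a single diagonal generator $t_{ii}$ shared by $T^u$ and $T^l$. Also, $T^l$ satisfies the reversed RTT relation with reversed coproduct, so it plays the role of $S(L^-)=(L^-)^{-1}$ rather than $L^-$. Your pair $(\id\otimes\pi)\mathcal{R}_{21}$, $(\id\otimes\pi)\mathcal{R}^{-1}$ has mutually inverse Cartan factors, hence diagonals $K_i$ and $K_i^{-1}$, so it cannot send $t_{ii}$ consistently. You need two matrices with the same Cartan factor, e.g.\ built from $\mathcal{R}^{-1}$ and $\mathcal{R}_{21}^{-1}$, to match $t_{ii}\mapsto K_i^{-1}$.

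\textbf{A smaller slip.} The RTT relation for the minor on rows $i,i+1$ and columns $i+1,i+2$ is $t_{i,i+1}t_{i+1,i+2}-t_{i+1,i+2}t_{i,i+1}=(q-q^{-1})t_{i+1,i+1}t_{i,i+2}$, with coefficient $1$. The powers $q^{\pm1}$ appear only after attaching the diagonal factors of $\psi(F_i)$ and $\psi(F_{i+1})$.
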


\subsubsection{Small $\ugl$ and $\usl$}

\begin{defin}
For $q$ a root of unity, the \textbf{small quantum group} $\bgl$ is the finite dimensional Hopf algebra quotient of $\ugl$ given by 
\[
E_{\alpha}^{\tp}=F_{\alpha}^{\tp}=0, \qquad K_j^p=1,\]
where $\alpha\in\Phi^{+}$ is a positive root. The \textbf{small quantum group} $\bsl$ is the finite dimensional Hopf algebra quotient of $\usl$ by the relations
\[
E_{\alpha}^{\tp}=F_{\alpha}^{\tp}=0, \qquad \hk_i^p=1.\]
\end{defin}
We note that for even roots of unity, the small quantum group $\bsl$ is sometimes referred to as the \textit{restricted quantum group} \cite{FGST}. For simplicity, we refer to both cases as the small quantum group. 

More generally, for $k\in\mn$, $\bglk$ is the Hopf algebra quotient of $\ugl$ defined by
\[
K_{j}^{kp}=1, \qquad E_{\alpha}^{\tp}=F_{\alpha}^{\tp}=0.\]	

The isomorphism in Theorem \ref{thm: ugl simeq ot} can be extended to the respective small algebras:
\begin{prop}
There is an isomorphism of Hopf algebras
\[
\botk\simeq\bglk^{\text{cop}}.\]
\end{prop}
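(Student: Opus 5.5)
The plan is to push the isomorphism $\Phi:\ugl^{\text{cop}}\to\ot$ of Theorem \ref{thm: ugl simeq ot} (or rather its inverse) down to the quotients. Concretely, I will show that $\Phi$ carries the Hopf ideal $I_{\mathfrak u}\subseteq\ugl$ generated by $K_j^{kp}-1$, $E_\alpha^{\tp}$, $F_\alpha^{\tp}$ ($\alpha\in\Phi^+$) exactly onto the Hopf ideal $I_{\mathfrak o}\subseteq\ot$ generated by $t_{ii}^{kp}-1$ and $t_{ij}^{\tp}$ ($i\neq j$). Then $\Phi$ descends to a Hopf algebra isomorphism $\bglk^{\text{cop}}\to\botk$, since a Hopf map taking one Hopf ideal onto another induces a bijective Hopf map of quotients.

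The diagonal part is immediate: $t_{ii}\mapsto K_i^{-1}$, so $t_{ii}^{kp}-1\mapsto K_i^{-kp}-1$, which generates the same ideal as $K_i^{kp}-1$ because $K_i$ is invertible. For the simple roots, $t_{i+1,i}\mapsto(1-q^2)K_i^{-1}E_i$. Using $K_iE_i=qE_iK_i$, we get $(K_i^{-1}E_i)^{\tp}=q^{c}K_i^{-\tp}E_i^{\tp}$ for some integer $c$, so $t_{i+1,i}^{\tp}$ and $E_i^{\tp}$ generate the same ideal. The same argument handles $t_{i,i+1}^{\tp}$ and $F_i^{\tp}$. Since $q^2\neq1$, the scalars $(1-q^2)$ and $(q^{-1}-q)$ are nonzero.

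The main step is the non-simple roots. Here I need to identify $\Phi^{-1}(t_{ij})$ for $|i-j|>1$ with a root vector, up to a nonzero scalar and a Cartan factor. The relation $T^u_{23}\hr_{12}T^l_{23}=T^l_{13}\hr_{12}T^u_{13}$, or equivalently the $L$-operator description of $\ot$, gives $t_{ij}$ recursively as a $q$-commutator of $t_{i,m}$ and $t_{m,j}$ times a power of $t_{mm}^{-1}$. This matches Lusztig's recursive definition $E_{\alpha+\beta}=E_\alpha E_\beta-q^{\pm1}E_\beta E_\alpha$ for a suitable convex ordering. So I expect $\Phi^{-1}(t_{ji})=c_{ij}K_{\bullet}E_{\alpha_{ij}}$ and $\Phi^{-1}(t_{ij})=c'_{ij}F_{\alpha_{ij}}K_{\bullet}$, where $\alpha_{ij}=\epsilon_i-\epsilon_j$, the $c$'s are nonzero scalars, and $K_\bullet$ is a monomial in the $K$'s that $q$-commutes with $E_\alpha$. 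Taking $\tp$-th powers as in the simple case then shows that the $t_{ij}^{\tp}$ and the $E_\alpha^{\tp}$, $F_\alpha^{\tp}$ generate the same ideal.

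If matching the root vectors exactly is awkward, because the definition of $E_\alpha$ depends on the reduced word, I have a fallback. First, the ideal generated by $E_i^{\tp}$, $F_i^{\tp}$, and the $t_{ij}^{\tp}$ for simple positions is independent of these choices. Second, I can use the commutation formula displayed above for $t_{ij}^nt_{kl}$ with $n=\tp$: it shows that the $t_{ij}^{\tp}$ $q$-commute modulo lower terms. Together these allow an induction on $|i-j|$ showing that the $t_{ij}^{\tp}$ lie in the ideal generated by the root-vector powers, and conversely.

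Finally, $I_{\mathfrak o}$ is a Hopf ideal because $\botk$ was asserted to be a Hopf algebra quotient, and likewise $I_{\mathfrak u}$ for $\bglk$. So matching the generators suffices, and $\Phi$ induces the claimed isomorphism $\botk\simeq\bglk^{\text{cop}}$. As a consistency check, both sides should have dimension $(kp)^N\tp^{N(N-1)}$.
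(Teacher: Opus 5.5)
Your proposal is correct and takes the same route as the paper. The paper also descends the isomorphism of Theorem \ref{thm: ugl simeq ot} by matching $t_{ii}^{kp}=1$ with $K_i^{kp}=1$, $t_{i+1,i}^{\tp}=0$ with $E_i^{\tp}=0$, and the remaining $t_{ij}^{\tp}=0$ with the non-simple root relations, together with a dimension count. At that last step your identification of the non-adjacent $t_{ij}$ with Cartan-twisted root vectors gives more detail than the paper, and your dimension $(kp)^N\tp^{N(N-1)}$ is correct; the paper's $p^{kN}$ should read $(kp)^N$.
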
 
\begin{proof}
This can be seen by comparing the defining quotient relations on each side. In particular the relation
\[
t_{ii}^{kp}=1 ~ \text{ is equivalent to } ~ K_i^{kp}=1,\]
whilst the relation
\[
t_{i+1,i}^{\tp}=0 ~ \text{ is equivalent to } ~ E_i^{\tp}=0,\]
with $t_{i+j,i}^{\tp}=0$, $j>1$, corresponding to the other positive root relations.
Finally, using the fact that $\dim\bsl=p^{N-1}\tp^{\text{rank}({\mathfrak{g}})-N+1}$, we can see that both algebras have dimension $p^{kN}\tp^{N^2-N}$.
\end{proof}

For our purposes, we want to obtain a copy of $\Rep \bsl$ as a subcategory of $\Rep\bglk$, so that $\Rep\bsl$ can act on $\Rep \oh$. For this, we need to instead view $\bsl$ as a quotient of $\bglk$. Motivated by \cite{KlimykSchmudgen}[Section 8.5.3], we define the following:
\begin{defin}\label{def: alt bsla}
For $q$ an odd root of unity, we define $\bsla$ to be the Hopf algebra quotient of $\bgl$ by the	relation
\[
\prod\limits_{i=1}^{N}K_i=1.\]
For $q$ an even root of unity, we instead define $\bsla$ to be the Hopf algebra quotient of $\bglN$ by the same relation.
\end{defin}
\begin{remark}\label{remark: quasitriangularity of Uq(sl2)}
Considering the $N=2$ case, the above relation combined with the $\usltwo$ generator $\hk:=K_1K_2^{-1}$ gives
\[
\hk=K_1^2.\]
Then for $q$ an even root of unity, if we require $\hk^p=1$, it in turn requires $K_1^{2p}=1$, so $\bslatwo$ will contain a square root of $\hk$. In fact $\bslatwo$ appeared previously in \cite{FGST} (see also \cite{KondoSaito}[Section 4]), where it was noted that $\bslatwo$ is a quasitriangular Hopf algebra that contains $\bsltwo$ as a sub-algebra. 

Alternatively, for $q$ an odd root of unity, if 
\[
K^p_1=1,\]
then 
\[
\hk^{\frac{p+1}{2}}=K^{p+1}_1=K_1,\]
and so requiring higher orders of $K_1$ is not necessary.
\end{remark}

\section{The Reflection Equation Algebra, $\oh$}\label{section: 2}
Let $Z$ denote the $N\times N$ matrix of variables $(z_{ij})_{1\leq i,j \leq N}$. The \textit{reflection equation algebra}, $\oh$, is the unital algebra generated by the variables $z_{ij}$, $1\leq i,j\leq N$ with relations given by
\[
\hr_{12}Z_{23}\hr_{12}Z_{23} = Z_{23}\hr_{12}Z_{23}\hr_{12}.\]
In \cite{DCM1,DCM2,Me1}, the algebra $\oh$ was studied with added $*$-structure given by $Z^{*}=Z$, resulting in the classification of its bounded highest weight $*$-representations. With this added $*$-structure, $\oh$ can be considered as a deformation of the algebra of functions on the space of $N\times N$ hermitian matrices, hence the notation. In the current paper, we no longer consider this added $*$-structure, however we will keep the notation $\oh$ to refer to the reflection equation algebra.

The centre of $\oh$ is a polynomial algebra with generators $\sigma_k$, $1\leq k\leq N$. An explicit formula for these was given in \cite{JordanWhite}, see also \cite{Flore}. The elements $\sigma_1$ and $\sigma_N$ are often referred to as the \textit{quantum trace} and \textit{quantum determinant} respectively.

There is natural embedding of $\mathcal{O}_q(H(N-1))$ into $\oh$, just by mapping $z_{ij}$ to itself. Denoting $\sigma_k^{(k)}$ for the quantum determinant in $\mathcal{O}_q(H(k))$, then each $\sigma_k^{(k)}$ will be a $q$-commuting element when viewed as an element of $\oh$ for $N\geq k$, and are referred to as the \textit{leading quantum minors}.  

If we add the extra conditions that the leading quantum minors $\sigma_k^{(k)}$ are all invertible, and have a square root, for $1\leq k\leq N$, then we can obtain the following \cite{JosephLetzter, Rosso, JordanWhite}:
\begin{thm}\label{thm: oh simeq ot}
Let $\mu_k$ be such that $\mu_k^{-2}=\sigma_k^{(k)}$. Then as algebras, 
\[
\oh[\mu_1,...,\mu_N]\simeq\ot\simeq\ugl.\]
\end{thm}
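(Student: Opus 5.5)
This is a cited result (Joseph--Letzter, Rosso, Jordan--White), so the plan is to reconstruct the standard argument rather than find a new one. It proceeds in two halves.

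\textbf{The second isomorphism.} The isomorphism $\ot\simeq\ugl$ is Theorem \ref{thm: ugl simeq ot}. Strictly, that theorem gives $\ugl^{\text{cop}}\simeq\ot$, but as algebras $\ugl^{\text{cop}}=\ugl$. So all the work lies in the first isomorphism.

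\textbf{The map $\oh\to\ot$.} We define it through the factorisation
\[
Z\mapsto T^l\,(T^u)^{-1},
\]
or equivalently $Z\mapsto T^l\,S(T^u)$, up to the ordering convention that matches $\hr$.

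1. We check the reflection equation for this image. Write $Z=L\,U^{-1}$ with $L=T^l$ and $U=T^u$. The three defining relations of $\ot$ let us move $\hr_{12}$ past $L_{23}$, $U_{23}$ and the mixed products. We push everything so that $\hr_{12}Z_{23}\hr_{12}Z_{23}$ and $Z_{23}\hr_{12}Z_{23}\hr_{12}$ reduce to the same expression; this is the standard ``twisting by the coquasitriangular structure'' computation.

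2. We compute the images of the leading minors. Because $L$ is lower triangular and $U$ is upper triangular, the $k$-th leading quantum minor of $Z$ should map to a monomial $\prod_{i\le k}t_{ii}\,t_{ii}'$. Here $t_{ii}'$ is the diagonal of $U^{-1}$, which is $t_{ii}^{-1}$ up to powers of $q$. This is a Gauss-type factorisation of quantum minors, and it yields
\[
\sigma_k^{(k)}\mapsto c_k\prod_{i\le k}t_{ii}^{\pm2}
\]
for scalars $c_k$.

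3. Since the leading minors map to invertible squares in $\ot$, the map extends to $\oh[\mu_1,\dots,\mu_N]$, with $\mu_k$ sent to the appropriate monomial in the $t_{ii}^{\mp1}$. The $\mu_k$ and the $q$-commutation of the $\sigma_k^{(k)}$ are compatible with the $q$-commutation of the $t_{ii}$ with the remaining generators.

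\textbf{Bijectivity.} For surjectivity, the $\mu_k$ give all the $t_{ii}^{\pm1}$. The entries $z_{i,i+1}$ and $z_{i+1,i}$ are then, modulo the diagonal, $t_{i+1,i}$ and $t_{i,i+1}$ times diagonal factors, and these generate $\ot$. For injectivity, we compare PBW-type bases: ordered monomials in the off-diagonal $z_{ij}$ times Laurent monomials in the $\mu_k$, against the corresponding basis of $\ot$. Alternatively, we compare filtered or graded dimensions via the degeneration to the classical Gauss decomposition, or construct an explicit inverse using quasi-determinant formulas, $t_{i+1,i}\sim z_{i+1,i}$ corrected by minors.

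\textbf{Main obstacle.} The hardest step is injectivity, together with controlling the localisation. One must know that the localisation at the $q$-commuting (hence Ore) set generated by the $\sigma_k^{(k)}$ is well behaved and that $\oh$ is a domain. I would try the basis comparison first: the leading term of $z_{ij}$ is a product of the corresponding $L$ and $U^{-1}$ entries, and this gives a triangular change of basis.
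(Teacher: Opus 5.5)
\textbf{The map in your proposal is not an algebra map.} It does not work with this paper's presentation of $\ot$, and Steps 2--3 and the surjectivity argument fail with it.

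Here $T^u$ and $T^l$ share the same diagonal entries $t_{ii}$; both go to $K_i^{-1}$ in Theorem \ref{thm: ugl simeq ot}. The inverse of $T^u$ is upper triangular with diagonal exactly $t_{ii}^{-1}$. So under $Z\mapsto T^l(T^u)^{-1}$ the diagonals cancel: $z_{11}\mapsto t_{11}t_{11}^{-1}=1$. At the same time $z_{12}\mapsto t_{11}\bigl(-t_{11}^{-1}t_{12}t_{22}^{-1}\bigr)=-t_{12}t_{22}^{-1}\neq0$. This contradicts $z_{11}z_{12}=q^2z_{12}z_{11}$. The reversed order $(T^u)^{-1}T^l$ has the same cancellation in the $(N,N)$ entry, so ordering conventions do not rescue it.

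Your Step 2 is internally inconsistent for the same reason. If $t_{ii}'$ is $t_{ii}^{-1}$ up to a power of $q$, then $\prod_{i\le k}t_{ii}t_{ii}'$ is a scalar, not $c_k\prod_{i\le k}t_{ii}^{\pm2}$. Hence the leading minors do not go to invertible squares, and the $\mu_k$ neither extend the map nor produce the $t_{ii}^{\pm1}$.

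A formula of the shape $L^{\mp}S(L^{\pm})$ is correct in the FRT presentation, where the two triangular matrices have mutually inverse diagonals. In this paper $T^l$ already behaves like an antipode image: its coproduct is reversed, $(\id\otimes\Delta)T^l=T^l_{13}T^l_{12}$. So no inversion should be applied.

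\textbf{The correct map.} Take $Z\mapsto T^lT^u$, i.e.\ $z_{ij}\mapsto\sum_{k\le\min\{i,j\}}t_{ik}t_{kj}$. The paper gets this by composing the coaction $\beta(Z)=T^l_{13}Z_{12}T^u_{13}$ of Proposition \ref{prop: oh coaction to ot} with the character $Z\mapsto I$. It is therefore automatically a homomorphism. It sends $Z_{[k],[k]}\mapsto\prod_{i\le k}t_{ii}^2$, so $\mu_k\mapsto\prod_{i\le k}t_{ii}^{-1}$. Injectivity and the localised isomorphism are then taken from the cited works.

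If you want to keep your Step 1 as a direct check, it is four applications of the defining relations of $\ot$ (third, second, first, third):
\[
\hr_{12}T^l_{23}T^u_{23}\hr_{12}T^l_{23}T^u_{23}
=\hr_{12}T^l_{23}T^l_{13}\hr_{12}T^u_{13}T^u_{23}
=T^l_{23}T^l_{13}\hr_{12}\hr_{12}T^u_{13}T^u_{23}
=T^l_{23}T^l_{13}\hr_{12}T^u_{13}T^u_{23}\hr_{12}
=T^l_{23}T^u_{23}\hr_{12}T^l_{23}T^u_{23}\hr_{12}.
\]
With this map, your plans for the localisation and for injectivity by a PBW/triangularity comparison are reasonable.

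\textbf{A further correction to surjectivity.} For $i\ge2$, $z_{i,i+1}$ maps to $\sum_{k\le i}t_{ik}t_{k,i+1}$, which is not a diagonal factor times $t_{i,i+1}$. It is better to use the minors $Z_{[i],[i+1]\setminus\{i\}}$ and $Z_{[i+1]\setminus\{i\},[i]}$. Granting the minor form of the coaction (which the paper states only for leading minors), triangularity of $T^l$ and $T^u$ gives their images as diagonal monomials times $t_{i,i+1}$ and $t_{i+1,i}$ respectively.
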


\subsection{Obtaining $\oh$ from $\ogl$ via braiding}

$\ogl$ is \textit{co-quasitriangular}, which means that its category of comodules forms a braided tensor category. Following \cite{KlimykSchmudgen}[Chapter 10], this coquasitriangular structure comes from the following \textit{skew bicharacter}:
\begin{equation}
	\br: \ogl\otimes\ogl\rightarrow\mc, \qquad (\id\otimes\id\otimes\br)(X_{13}X_{24})=\Sigma\circ \hr,
\end{equation}	
where $\Sigma$ is the flip map $v\otimes w\mapsto w\otimes v$, and 
\[
\Sigma\circ\hr = \sum\limits_{i,j}q^{-\delta_{ij}}e_{ii}\otimes e_{jj}+(q^{-1}-q)\sum\limits_{i<j}e_{ij}\otimes e_{ji}.\]	
The skew bicharacter satisfies
\begin{equation}
	\br(ab,c) = \br(a,c_1)\br(b,c_2), \quad \br(a,bc)=\br(a_2,b)\br(a_1,c), \quad \br(a,1)=\varepsilon(a), \quad \br(1,a)=\varepsilon(a).
\end{equation}
The skew bicharacter has a convolution inverse $\br^{-1}$ given by
\[
(\id\otimes\id\otimes\br^{-1})X_{13}X_{24} = \hat{R}^{-1}\circ\Sigma = \sum\limits_{i,j}q^{\delta_{ij}}e_{ii}\otimes e_{jj}+(q-q^{-1})\sum\limits_{i<j}e_{ij}\otimes e_{ji},\] 
which satisfies
\begin{gather}
	\br(a_1,b_1)\br^{-1}(a_2,b_2) = \br^{-1}(a_1,b_1)\br(a_2,b_2) = \varepsilon(a)\varepsilon(b) \\
	\br^{-1}(a,b) = \br(S(a),b) \\
	\br^{-1}(ab,c) = \br^{-1}(b,c_1)\br^{-1}(a,c_2), \qquad \br^{-1}(a,bc) = \br^{-1}(a_1,b)\br^{-1}(a_2,c)
\end{gather}
We also need to consider the following convolution inverse (with respect to $(\ogl,\Delta)\otimes(\ogl,\Delta^{op})$):
\begin{equation}
	(\id\otimes\id\otimes \br')(X_{13}X_{24}) = \sum\limits_{i,j}q^{\delta_{ij}}e_{ii}\otimes e_{jj}+(q-q^{-1})\sum\limits_{i<j}q^{2(j-i)}e_{ij}\otimes e_{ji}.
\end{equation}
It satisfies
\begin{gather}
	\br'(a_1,b_2)\br(a_2,b_1) =\br(a_1,b_2)\br'(a_2,b_1) = \varepsilon(a)\varepsilon(b), \\ 
	\br'(a,b) = \br(a,S(b)) \\	
	\br'(ab,c) = \br'(a,c_2)\br'(b,c_1), \quad \br'(a,bc)=\br'(a_1,b)\br'(a_2,c), \quad \br'(a,1)=\varepsilon(a), \quad \br'(1,a)=\varepsilon(a).
\end{gather}
In terms of the generating matrix, we have
\[
X_{13}X_{24} = \sum\limits_{a,b,c,d}e_{ab}\otimes e_{cd}\otimes x_{ab}x_{cd},\]
so the bicharacters are given by:
\begin{equation}
	\br(x_{ab},x_{cd}) = \begin{cases}
		q^{-\delta_{ac}} & a=b, ~ c=d \\
		\delta_{a<b}(q^{-1}-q) & b=c, ~ a=d\\
		0 & \text{otherwise}
	\end{cases}
\end{equation}
\begin{equation}
	\br^{-1}(x_{ab},x_{cd}) = \begin{cases}
		q^{\delta_{ac}} & a=b, ~ c=d \\
		\delta_{a<b}(q-q^{-1}) & b=c, ~ a=d\\
		0 & \text{otherwise}
	\end{cases}
\end{equation}
\begin{equation}
	\br'(x_{ab},x_{cd}) = \begin{cases}
		q^{\delta_{ac}} & a=b, ~ c=d \\
		\delta_{a<b}(q-q^{-1})q^{2(b-a)} & b=c, ~ a=d\\
		0 & \text{otherwise}.
	\end{cases}
\end{equation}
For the following, see \cite{Majid}[Section 7.4], or \cite{KlimykSchmudgen}[Section 10.3]:
\begin{thm}\label{thm: REA braided isomorphism}
	We can define a new product $\ast$ on $\omn$ via
	\begin{equation}\label{eq: braided product}
		f\ast g :=\br(f_1,g_2)f_2g_3\br'(f_3,g_1).
	\end{equation}
	Then with respect to this new product, there is an isomorphism of algebras
	\begin{equation}
		\Phi:\oh\rightarrow(\omn,\ast), \qquad \Phi:Z\mapsto X.
	\end{equation}
\end{thm}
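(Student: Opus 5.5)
The plan is to prove Theorem \ref{thm: REA braided isomorphism} in two steps. First show that $\ast$ is an associative unital product on $\omn$. Then show that the generators $x_{ij}$ satisfy the reflection equation under $\ast$, so that $\Phi$ is a well-defined surjective homomorphism. Finally, compare graded dimensions to get injectivity.

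For associativity, expand $(f\ast g)\ast h$ and $f\ast(g\ast h)$ using the formula \eqref{eq: braided product}. We then use the multiplicativity rules $\br(ab,c)=\br(a,c_1)\br(b,c_2)$, $\br(a,bc)=\br(a_2,b)\br(a_1,c)$, together with the analogous rules for $\br'$, to split each term $\br(f_1g_2\cdots)$ into elementary pairings. Both sides should reduce to the same expression
\[
\br(f_1,g_2)\br(f_2,h_3)\br(g_3,h_4)\,f_3g_4h_5\,\br'(f_4,g_1)\br'(f_5,h_2)\br'(g_5,h_1),
\]
up to the ordering of Sweedler indices. Matching the indices uses the co-quasitriangularity identity $\br(a_1,b_1)a_2b_2=b_1a_1\br(a_2,b_2)$ in $\omn$; this is the dual form of the relation $\hr_{12}X_{13}X_{23}=X_{13}X_{23}\hr_{12}$. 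Unitality follows from $\br(1,a)=\br(a,1)=\varepsilon(a)$ and the same identities for $\br'$.

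Next we compute $x_{ab}\ast x_{cd}$ in matrix form. Since $(\id\otimes\Delta)X=X_{12}X_{13}$, the formula in matrix notation becomes
\[
X_{13}\ast X_{23}=(\Sigma\hr)\text{-type factor}\cdot X_{13}\hr^{\pm}X_{23}\cdot(\br'\text{ matrix}).
\]
Concretely, using the tabulated values of $\br$ and $\br'$, this reads $X_{1}\ast X_{2}= R_{21}\,X_1 R' X_2\,$ up to the flip conventions. Equivalently, in the $\hr$ formalism one should obtain
\[
\hr_{12}X_{13}\ast\text{-products}=\hr\,X_{23}\hr^{-1}\cdots,
\]
that is, $X_{13}\ast X_{23}=\hr_{12}^{-1}\cdot(\text{ordinary }X_{13}\hr_{12}X_{23}\hr_{12}^{\pm})$ after rearranging. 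Substituting this into the $\ast$-version of the reflection equation, the equation $\hr_{12}X_{23}\hr_{12}X_{23}=X_{23}\hr_{12}X_{23}\hr_{12}$ under $\ast$ turns into an identity. That identity follows from the RTT relation for $\omn$ combined with the braid relation for $\hr$. This is the step I expect to be the main obstacle. The bookkeeping of which $\hr$, $\hr^{-1}$, and $\Sigma$ factors appear, and the $q^{2(j-i)}$ twist in $\br'$, must be matched exactly. I would first verify the computation for $N=2$ by hand, to pin down the conventions, before writing the general matrix argument.

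Finally, $\Phi$ is surjective because $\ast$ agrees with the ordinary product modulo lower filtration terms. More precisely, $\omn$ is generated under $\ast$ by the $x_{ij}$, since the degree-$n$ parts are preserved and $\br,\br'$ vanish off the diagonal pieces. For injectivity, both algebras have PBW-type bases with Hilbert series $(1-t)^{-N^2}$: for $\oh$ this is known (e.g.\ via \cite{KlimykSchmudgen}), and for $\omn$ it is standard. Also, $\ast$ preserves the grading. A surjective graded map between spaces of equal finite graded dimension is therefore an isomorphism.
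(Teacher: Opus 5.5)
\paragraph{Where the proposal falls short.} The paper gives no proof of this statement; it quotes it from Majid and Klimyk--Schm\"udgen. A direct verification is therefore a legitimate route. Your associativity sketch can be completed, although the two bracketings do not literally reduce to one common expression. They are related by the Yang--Baxter-type identities obtained by pairing $\br(b_1,c_1)b_2c_2=c_1b_1\br(b_2,c_2)$ with $\br(a,\cdot)$ and with $\br'(a,\cdot)$.

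\paragraph{The main gap: the reflection equation is never verified.} This is the step you flag yourself, and it is not carried out. The formulas you display for $X_{13}\ast X_{23}$ are placeholders and are not right: for $N=1$ the last one gives $q^{\mp1}x^2$, whereas $x\ast x=x^2$. In fact $X_{13}\ast X_{23}$ on its own has no clean matrix form. In
\[
x_{ab}\ast x_{cd}=\sum_{i,j,k,l}\br(x_{ai},x_{kl})\,x_{ij}x_{ld}\,\br'(x_{jb},x_{ck}),
\]
the factor $\br'$ stands to the right of $X$ in the first tensor slot but to the left of $\br$ in the second.

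The missing idea is to compute the combination that actually occurs in the reflection equation. Contract with $\br(x_{bn},x_{mc})$ and use $\br'(a_1,b_2)\br(a_2,b_1)=\varepsilon(a)\varepsilon(b)$ for $a=x_{jn}$, $b=x_{mk}$. This removes $\br'$ entirely:
\[
\sum_{b,c}\br(x_{bn},x_{mc})\,x_{ab}\ast x_{cd}=\sum_{i,l}\br(x_{ai},x_{ml})\,x_{in}x_{ld}.
\]
In matrix form this reads $X_{13}(\Sigma\circ\hr)_{12}X_{23}=(\Sigma\circ\hr)_{12}X_{13}X_{23}$, with $\ast$-products on the left. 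Multiplying on the left by $\Sigma$ gives $X_{23}\hr_{12}X_{23}=\hr_{12}X_{13}X_{23}$, again with $\ast$-products on the left. Both sides of the reflection equation then become $\hr_{12}^2X_{13}X_{23}$, using only $\hr_{12}X_{13}X_{23}=X_{13}X_{23}\hr_{12}$. The braid relation is not needed at this point.

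\paragraph{The surjectivity and injectivity step.} This also fails as written. Since $\ast$ is homogeneous, there are no lower-order terms. Also, $\br$ and $\br'$ do not vanish off the diagonal: $\br(x_{ab},x_{ba})=q^{-1}-q$ for $a<b$. So neither reason you give proves surjectivity.

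Use instead the inverse formula $fg=\br^{-1}(f_1,g_1)\,f_2\ast g_3\,\br(f_3,g_2)$, which follows from the two convolution-inverse identities. Take $f=x_{ij}$ and induct on $\deg g$; this shows every monomial lies in the $\ast$-subalgebra generated by the $x_{ij}$.

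For injectivity, once surjectivity is known, the entire content is the bound $\dim(\text{degree-}n\text{ part of }\oh)\le\binom{N^2+n-1}{n}$. The Hilbert series of $\oh$ is often deduced from this very isomorphism. You therefore need a source that proves it independently, or a direct argument that ordered monomials in the $z_{ij}$ span each graded piece of $\oh$. Otherwise the argument is circular.
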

We can also write the original product on $\omn$ in terms of the modified product as follows:
\begin{equation}\label{eq:original product}
	fg = \br^{-1}(f_1,g_1)f_2\ast g_3\br(f_3,g_2).
\end{equation}
\begin{defin}
	We denote by $\phi_{\ast}$ the vector space isomorphism
	\[
	\phi_{\ast}:\omn\rightarrow(\omn, \ast).\]
\end{defin}
Then combining with Theorem $\ref{thm: REA braided isomorphism}$, we have a vector space isomorphism
\[
\Phi^{-1}\circ\phi_{\ast}:\omn\rightarrow\oh.\]

\subsection{Quantum Minors and Coactions}

Using the relation between $\omn$ and $\oh$, we can take the quantum minors $X_{IJ}$ in $\omn$ and obtain corresponding quantum minors in $\oh$:
\begin{defin}
For $I,J\subseteq[N]$, $\lvert I\rvert=\lvert J\rvert$, the \textbf{quantum minor} $Z_{IJ}\in\oh$ is defined by
\[
Z_{IJ}:=\Phi^{-1}\left(\phi_{\ast}(X_{IJ})\right).\]
\end{defin}
Explicit expansion formulae for the $\oh$ quantum minors are given in \cite{DCM1}[Proposition 2.9]. A particularly important property of $\oh$ is that it forms a comodule algebra over both $\ogl$ and $\ugl$ \cite{DCM1}. For our purposes, we will only consider the $\ugl$ case. The comodule algebra structure means that there is a homomorphism 
\[
\beta:\oh\rightarrow\oh\otimes\ugl^{\text{cop}},\]
such that $\oh$ forms a comodule over $\ugl^{\text{cop}}$ with respect to $\beta$. This in turn means that $\Rep\oh$ forms a module category over $\Rep\ugl^{\text{cop}}$. Using the isomorphism in Theorem \ref{thm: ugl simeq ot}, combined with \cite{KlimykSchmudgen}[Proposition 10.30] or \cite{Majid}[Theorem 7.4.1], we can describe the coaction as follows:
\begin{prop}\label{prop: oh coaction to ot}
The comodule algebra structure $\beta:\oh\rightarrow\oh\otimes\ot$ is given by
\[
\beta:Z\mapsto T_{13}^lZ_{12}T_{13}^u.\]
In terms of generators, we have
\[
\beta(z_{ij}) = \sum\limits_{\substack{1\leq k\leq i\\ 1\leq l\leq j}}z_{kl}\otimes t_{ik}t_{lj}.\]
In particular, the leading quantum minors satisfy
\[
\beta(Z_{[k],[k]}) = Z_{[k],[k]}\otimes\left(\prod\limits_{i=1}^{k}t_{ii}^{2}\right), \quad 1\leq k\leq N.\]
\end{prop}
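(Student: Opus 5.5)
The plan is to get the whole statement from the matrix form $\beta(Z)=T^l_{13}Z_{12}T^u_{13}$. The generator formula then follows by reading off matrix entries, and the leading-minor formula follows from a quantum-minor version of the same coaction.

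For the matrix form, I would start from the standard description of the coaction of $\ogl$ on the transmuted algebra $(\omn,\ast)$ (\cite{Majid}[Theorem 7.4.1], \cite{KlimykSchmudgen}[Proposition 10.30]). This is the adjoint-type coaction $x_{ij}\mapsto \sum x_{kl}\otimes S(x_{ik})x_{lj}$, written in whichever convention makes $\ast$ covariant. I would then push the Hopf factor into $\ugl^{\text{cop}}$ through the $L$-operators built from the bicharacters $\br$ and $\br'$. For the standard $\hr$ these $L$-operators are triangular, with the lower triangular one inverted. Under the isomorphism of Theorem \ref{thm: ugl simeq ot}, they are exactly the generating matrices $T^l$ and $T^u$. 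One checks the normalisation on the diagonal ($t_{ii}\mapsto K_i^{-1}$) and on one off-diagonal entry per side. Transporting along $\Phi$ gives the coaction $\beta(Z)=T^l_{13}Z_{12}T^u_{13}$. As a sanity check, this map respects the reflection equation: substituting it into $\hr_{12}Z_{23}\hr_{12}Z_{23}=Z_{23}\hr_{12}Z_{23}\hr_{12}$ and using the three $RTT$-relations of $\ot$ shows that the relation is preserved.

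Taking the $(i,j)$ entry gives $\beta(z_{ij})=\sum_{k,l} z_{kl}\otimes t_{ik}t_{lj}$. Since $t_{ik}=0$ for $k>i$ and $t_{lj}=0$ for $l>j$, the sum reduces to $k\le i$, $l\le j$, which is the stated formula.

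For the leading minors, the index restriction shows that $\beta$ maps the subalgebra generated by $z_{ij}$ with $i,j\le k$ (a copy of $\mathcal{O}_q(H(k))$) into itself tensored with $\ot$. I would then prove a minor coaction formula
\[
\beta(Z_{IJ})=\sum_{K,L}Z_{KL}\otimes T^l_{IK}T^u_{LJ}.
\]
Here $T^l_{IK}$ and $T^u_{LJ}$ are the images of the $\ogl$ quantum minors under the quotient maps onto $\otl$ and $\otu$. The argument uses that $\phi_\ast$ is an isomorphism of comodules: the coaction is the same linear map before and after twisting the product. On the untwisted side the formula follows from the coproduct formula for quantum minors.

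Apply this with $I=J=[k]$. Triangularity forces $K=L=[k]$, because a minor $T^l_{[k],K}$ with $K\neq[k]$ has a column index larger than all available row indices and so vanishes. The same holds for $T^u$. The minor of a triangular matrix on $[k]$ is $\prod_{i\le k}t_{ii}$ by the expansion (\ref{eq: quantum minor expansion}), exactly as $D_q\mapsto\prod t_{ii}$. The diagonal elements commute, so the product of the two minors is $\prod_{i=1}^k t_{ii}^2$.

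The main obstacle is the minor coaction step: checking that the coaction really is unchanged by the twist, with the correct placement of antipodes and of $\text{cop}$. If this proves delicate, the fallback is to compute directly. Since $Z_{[k],[k]}$ is central in $\mathcal{O}_q(H(k))$ (it is $\sigma_k^{(k)}$), its image spans a one-dimensional subcomodule. The character can then be computed by evaluating on the weight-grading, i.e. by applying the diagonal part of $\beta$ to the diagonal monomial $z_{11}\cdots z_{kk}$, which appears in the minor with coefficient $1$ and receives the factor $\prod t_{ii}^2$.
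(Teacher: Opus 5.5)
Your proposal is correct and follows the route the paper itself indicates: the paper only cites the transmutation results of \cite{KlimykSchmudgen} and \cite{Majid} and transports them through $\ugl^{\text{cop}}\simeq\ot$, giving no argument for the leading-minor formula, and your minor coaction plus triangularity argument supplies that step. Two small fixes are needed. First, $\otl$ carries the reversed relation $\hr_{12}T^l_{23}T^l_{13}=T^l_{23}T^l_{13}\hr_{12}$, so the map $x_{ij}\mapsto\delta_{i\geq j}t_{ij}$ onto $\otl$ is an algebra anti-homomorphism (this is what your inverted lower $L$-operator produces); hence $T^l_{IK}$ must be read with products reversed, which is harmless for $I=K=[k]$. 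Second, the justification in your fallback is invalid, because centrality does not give a one-dimensional subcomodule: for $N=2$ the quantum trace $\sigma_1$ is central, yet $\beta(\sigma_1)$ has a nonzero $z_{12}\otimes t_{21}t_{22}$ component.
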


If we combine the coaction with the character $z_{ij}\mapsto\delta_{ij}$ on $\oh$, then this gives us an injective map 
\begin{gather}
\oh\rightarrow\ot \\ Z\mapsto T^lT^u\\
z_{ij}\mapsto\sum\limits_{1\leq k\leq \min\{i,j\}}t_{ik}t_{kj} \qquad Z_{[k],[k]}\mapsto \prod\limits_{i=1}^{k}t_{ii}^{2}.
\end{gather}
The isomorphism in Theorem \ref{thm: oh simeq ot} can then be seen to be the result of this injective map, and the $\mu_k$ in the theorem are given by $\mu_k=Z_{[k],[k]}^{-\frac{1}{2}}$.

\section{A Presentation of Small $\oh$}\label{section: 3}

The braided product $\ast$ can also be considered with respect to the quotient algebra $\bogl$. To do so, we take the ideal defining $\bogl$ as a quotient, and combine this with the braided product and isomorphism in Theorem \ref{thm: REA braided isomorphism}. This then gives a corresponding ideal in $\oh$, which defines a \textit{small} version of the reflection equation algebra.

\begin{defin}
The \textbf{small reflection equation algebra} $\boh$ is the finite dimensional quotient of $\oh$, with quotient relations
\[
\Phi^{-1}(\phi_{\ast}(x_{ii}^{p}))=1, \qquad \Phi^{-1}(\phi_{\ast}(x_{ij}^{\tp}))=0 ~ \text{ for } ~ i\neq j.\]
\end{defin}
A presentation for $\boh$ at odd roots of unity was given in \cite{CookeLaugwitz}:
\begin{thm}
For $q$ an odd root of unity, the quotient $\boh$ is defined by
\begin{equation}
z_{11}^{p}=1, \qquad z_{ij}^{\tp}=0 ~ \text{ for } ~ i\neq j, \qquad \Phi^{-1}(\phi_{\ast}(x_{kk}^{p}))=1 ~ \text{ for } ~ 1<k\leq N.
\end{equation}
\end{thm}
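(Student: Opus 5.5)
To prove this, I would show that the two ideals of $\oh$ coincide: the ideal $\mathcal{J}$ generated by $\Phi^{-1}(\phi_{\ast}(x_{ii}^{p}))-1$ and $\Phi^{-1}(\phi_{\ast}(x_{ij}^{\tp}))$ for $i\neq j$, and the ideal $\mathcal{J}'$ generated by $z_{11}^p-1$, $z_{ij}^{\tp}$ for $i\neq j$, and $\Phi^{-1}(\phi_{\ast}(x_{kk}^{p}))-1$ for $k>1$. The main tool is the formula \eqref{eq: braided product} for $\ast$ together with its inverse \eqref{eq:original product}. The aim is to compare the $n$-fold braided power $x_{ij}^{\ast n}$ with the ordinary power $x_{ij}^n$ in $\omn$.

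First, compute $x_{ij}^{\ast n}$ by induction on $n$. Iterating the coproduct gives $\Delta^{(2)}(x_{ij}^{n-1})=\sum x_{ia}^{\ast}\otimes\cdots$. Only a few terms survive in $\br(f_1,x_{cd})$ and $\br'(f_3,x_{ab})$, because $\br$ and $\br'$ vanish on $x_{ab}\otimes x_{cd}$ unless the indices are of the form $a=b,c=d$ or $b=c,a=d$.

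\begin{itemize}
\item For $(i,j)=(1,1)$, the leftmost index of every term is $1$, and every correction term involves some $x_{1m}x_{m1}$ with a coefficient that is a multiple of $(q-q^{-1})$. So $z_{11}^{n}$ equals $q^{c_n}\,\Phi^{-1}\phi_\ast(x_{11}^n)$ plus lower terms built from $x_{1a}\cdots x_{a1}$.
\item For $i\neq j$, a triangularity argument using the commutation relations \eqref{eqn: ogl commutation relations} should show that $z_{ij}^{\tp}$ equals $q^{c}\,\Phi^{-1}\phi_\ast(x_{ij}^{\tp})$ modulo terms lying in the ideal generated by the images of $x_{ab}^{\tp}$ ($a\ne b$). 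The key point is that the coefficients $(q-q^{1-2n})$ and the factors $q^{2n}-1$ appearing in the expansion vanish at $n=\tp$, because $q^{2\tp}=1$.
\end{itemize}

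Second, deduce $\mathcal{J}'\subseteq\mathcal{J}$. Write each generator of $\mathcal{J}'$ via \eqref{eq:original product} as a combination of braided products of elements already in $\mathcal{J}$. The relation $x_{ij}^{\tp}=0$ is coproduct-compatible, since $\Delta(x_{ij}^{\tp})=\sum_r x_{ir}^{\tp}\otimes x_{rj}^{\tp}$. It follows that the vector space image of the bi-ideal defining $\bogl$ is stable under the twisting: both $\br$ and $\br'$ evaluate through $\varepsilon$-type values on the relevant tensor legs. Hence $z_{ij}^{\tp}$ and $z_{11}^p-1$ lie in $\mathcal{J}$.

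Third, show the reverse inclusion $\mathcal{J}\subseteq\mathcal{J}'$ by the same expansion read backwards. This works by induction on the ordering of index pairs $(i,j)$, using the lower-triangular shape of the coaction in Proposition \ref{prop: oh coaction to ot}. Both ideals are $\beta$-stable, and $\beta(z_{ij})$ involves only $z_{kl}$ with $k\le i,l\le j$. So once $z_{11}^p-1$ and the off-diagonal $z_{ij}^{\tp}$ are known to lie in a $\beta$-stable ideal, coacting and applying suitable functionals on $\ot$ generates the remaining off-diagonal braided powers. The diagonal $k>1$ relations are in both ideals by definition.

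The main obstacle is the first step: controlling the cross terms in $x_{ij}^{\ast\tp}$. In particular, I need to show that the $\br,\br'$ contributions either vanish at the root of unity or land in the ideal. This vanishing is what fails for $x_{kk}^{p}$ with $k>1$, which explains why those relations remain in twisted form. I would first carry out the computation explicitly for $N=2$ and then induct on $N$ using the embedding $\mathcal{O}_q(H(N-1))\subset\oh$.
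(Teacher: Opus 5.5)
There is a genuine gap. The step you flag as the main obstacle is essentially the whole content of the statement, and you leave it open with an incorrect picture of what happens.

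\textbf{The missing computation.} There are no correction terms at all. For every $n$ and every $q$ one has $x_{ij}^{n}=q^{\frac{1}{2}(n^2-n)}x_{ij}^{\ast n}$ for $i\neq j$, and $x_{11}^{n}=x_{11}^{\ast n}$. These are Lemma \ref{lem: ast expansion 2}, and Lemma \ref{lem: ast expansion 1} with $I=J=[1]$; this is the route the paper indicates.

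The mechanism is the support of the bicharacters, not root-of-unity cancellation:
\begin{enumerate}
\item Write $x_{ij}^n=x_{ij}\cdot x_{ij}^{n-1}$ via \eqref{eq:original product}, and expand the factors $\br^{-1}(x_{ia},\,\cdot\,)$ and $\br(x_{bj},\,\cdot\,)$ using multiplicativity.
\item The off-diagonal values of $\br^{\pm 1}$ occur only on pairs $(x_{ab},x_{ba})$ with $a<b$. In a factor $\br^{-1}(x_{ie},x_{ic})$ this would require $i<i$. In a factor $\br(x_{ej},x_{id})$ it would require $i=j$ and $e=d<j$.
\item So for $i\neq j$, or for $i=j=1$, every summation index is forced. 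One gets $x_{ij}^n=q^{n-1}x_{ij}\ast x_{ij}^{n-1}$, respectively $x_{11}^n=x_{11}\ast x_{11}^{n-1}$, and induction gives the two formulas.
\item Applying $\Phi^{-1}$ gives $\Phi^{-1}(\phi_{\ast}(x_{ij}^{\tp}))=q^{\frac{1}{2}(\tp^2-\tp)}z_{ij}^{\tp}$ and $\Phi^{-1}(\phi_{\ast}(x_{11}^{p}))=z_{11}^{p}$.
\end{enumerate}
Hence the generating sets of $\mathcal{J}$ and $\mathcal{J}'$ agree up to nonzero scalars, and the statement is immediate. The support argument breaks only through $\br(x_{bk},x_{kd})$ with $b=d<k$. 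That is exactly why $x_{kk}^{p}$, $k>1$, has to stay in twisted form.

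\textbf{Problems with your intermediate steps.}
\begin{enumerate}
\item Your claim that $x_{11}^{\ast n}$ differs from $x_{11}^{n}$ by terms built from $x_{1a}\cdots x_{a1}$ is false. Had such terms existed, Step 2 could not absorb them: for instance $x_{12}x_{21}\neq 0$ in $\bogl$, so its image is not in $\mathcal{J}$.
\item The relations \eqref{eqn: ogl commutation relations}, and the vanishing of $q-q^{1-2n}$ at $n=\tp$, concern the untwisted product. They do not compare $x_{ij}^{n}$ with $x_{ij}^{\ast n}$, so the triangularity argument in your second bullet has nothing to run on.
\item In Step 2, the assertion that the image of the $\bogl$ ideal is stable under twisting, because $\br$ and $\br'$ are of \emph{$\varepsilon$-type} on it, is stated but never checked.
\item Step 3 is circular. $\beta$-stability of $\mathcal{J}'$ is not available before $\mathcal{J}'=\mathcal{J}$ has been proved. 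Also, \emph{applying suitable functionals} to $\beta(z_{ij}^{\tp})$ does not evidently produce the remaining generators.
\end{enumerate}
Once the exact identities above are established, Steps 2 and 3, and the induction on $N$, are unnecessary.
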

Although we omit it here, in \cite{CookeLaugwitz}[Theorem 4.15] an explicit formula for $\Phi^{-1}(\phi_{\ast}(x_{kk}^{p}))$, $1<k\leq N$, is given. This formula is very complicated compared to the other relations, so our aim is to give an alternative relation equivalent to this using quantum minors. We will do this by combining Theorem \ref{thm: REA braided isomorphism} with Proposition \ref{prop: power of quantum determinant, minor expansion}.

For this, we will first need several lemmas. To simplify notation, we use the following:
\begin{defin}
	For $a\in\omn$, and $\ast$ the braided product from Equation \ref{eq: braided product}, we denote
	\[
	a^{\ast n}:=a\ast a\ast...\ast a.\]
\end{defin}
The following result appears in \cite{CookeLaugwitz}[Proposition 4.23]: 
\begin{lemma}\label{lem: ast expansion 2}
	For $i\neq j$, and $n\in\mn$, we have
	\[
	x_{ij}^{n} = q^{\frac{1}{2}(n^2-n)}x_{ij}^{\ast n}.\]
\end{lemma}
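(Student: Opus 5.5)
Induction on $n$ is the natural approach. The identity is trivial for $n=1$. Assume it holds for $n$; then
\[
x_{ij}^{\ast (n+1)} = x_{ij}^{\ast n}\ast x_{ij} = q^{-\frac12(n^2-n)}\, x_{ij}^n\ast x_{ij},
\]
so it suffices to show $x_{ij}^n\ast x_{ij} = q^{-n}x_{ij}^{n+1}$, since $\tfrac12((n+1)^2-(n+1)) = \tfrac12(n^2-n)+n$. I would compute this directly from the definition (\ref{eq: braided product}),
\[
f\ast g = \br(f_1,g_2)f_2g_3\br'(f_3,g_1),
\]
with $f=x_{ij}^n$ and $g=x_{ij}$.

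First, expand the coproducts:
\[
\Delta^{(2)}(x_{ij}) = \sum_{a,b} x_{ia}\otimes x_{ab}\otimes x_{bj},
\]
and $\Delta^{(2)}(x_{ij}^n)$ is the corresponding $n$-fold product of such sums. Next, evaluate the bicharacter factors using the multiplicativity rules $\br(ab,c)=\br(a,c_1)\br(b,c_2)$ and $\br'(ab,c)=\br'(a,c_2)\br'(b,c_1)$ together with the explicit tables for $\br$ and $\br'$ on generators. The key observation is that $\br(x_{ab},x_{cd})$ and $\br'(x_{ab},x_{cd})$ vanish unless either both arguments are diagonal, or $b=c$, $a=d$ with $a<b$.

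Consider $\br(f_1,g_2)$, where $g_2=x_{ab}$ and $f_1$ is a product of $x_{i c_k}$, all with first index $i$. If $a\neq b$, a nonzero term requires each factor of $f_1$ to pair off-diagonally, so that the chain of indices forces $c_k=a$ and successive equalities $a=i$, etc. A similar argument applies to $\br'(f_3,g_1)$, whose factors all have second index $j$. I expect the only surviving terms to be those with $f_1=x_{ii}^n$, $g_2=x_{ii}$ and $f_3=x_{jj}^n$, $g_1=x_{jj}$, or possibly some terms where the off-diagonal contributions cancel. The diagonal term contributes the middle factor $x_{ij}^n x_{ij}$, with scalar $\br(x_{ii}^n,x_{ii})\br'(x_{jj}^n,x_{jj}) = q^{-n}\cdot q^{n}$.

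The main obstacle is that this scalar comes out as $1$ rather than $q^{-n}$, so the naive guess that only diagonal terms survive is wrong. The off-diagonal terms must therefore contribute: terms where $g_2=x_{ab}$ with $a\neq b$ give middle products like $x_{ic}\cdots x_{bj}$. These are then rewritten via the $\omn$ commutation relations (\ref{eqn: ogl commutation relations}) as multiples of $x_{ij}^{n+1}$, or they cancel. I would handle this by splitting into the cases $i<j$ and $i>j$, tracking which single off-diagonal pairing is permitted by the condition $a<b$ in the tables; this condition makes one of the two bicharacters contribute only diagonally. I would carry out the $n=1$ computation explicitly for $x_{ij}\ast x_{ij}$ to fix the sign and power conventions before writing the general inductive step, in which $x_{ij}^n$ is treated as a single block via the multiplicativity rules.

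As an alternative if the direct route is too messy, I would use the inverse formula (\ref{eq:original product}),
\[
fg = \br^{-1}(f_1,g_1)f_2\ast g_3\br(f_3,g_2),
\]
with $f=x_{ij}^{\ast n}$ and $g=x_{ij}$, where the analogous support analysis may collapse more cleanly.
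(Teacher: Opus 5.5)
Your inductive reduction is sound. Bilinearity of $\ast$ gives $x_{ij}^{\ast(n+1)}=q^{-\frac12(n^2-n)}\,x_{ij}^n\ast x_{ij}$, so everything rests on showing $x_{ij}^n\ast x_{ij}=q^{-n}x_{ij}^{n+1}$. Your evaluation of that product is wrong, though, and the ``obstacle'' you found comes from an indexing slip. Write $\Delta^{(2)}(x_{ij})=\sum_{c,d}x_{ic}\otimes x_{cd}\otimes x_{dj}$. The leg $g_1=x_{ic}$ always has row index $i$, so it can never be $x_{jj}$. The pair $g_1=x_{jj}$, $g_2=x_{ii}$ is also inconsistent, since the column of $g_1$ must equal the row of $g_2$. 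The surviving term actually has $f_1=x_{ii}^n$, $g_2=x_{ii}$, $f_3=x_{jj}^n$, $g_1=x_{ii}$. Its scalar is $\br(x_{ii}^n,x_{ii})\,\br'(x_{jj}^n,x_{ii})=q^{-n}\cdot q^{n\delta_{ij}}=q^{-n}$, which is exactly what you need. So your proposed remedy cannot work. You want to extract the missing power of $q$ from off-diagonal terms by rewriting them with the commutation relations (\ref{eqn: ogl commutation relations}), but those terms are identically zero. As written, the key step is both miscomputed and aimed in the wrong direction.

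The correct support argument is short if you evaluate $\br'$ before $\br$. By $\br'(ab,c)=\br'(a,c_2)\br'(b,c_1)$, the factor $\br'(x_{b_1j}\cdots x_{b_nj},x_{ic})$ is computed along the iterated coproduct $x_{ic}\mapsto\sum x_{ie_1}\otimes x_{e_1e_2}\otimes\cdots\otimes x_{e_{n-1}c}$. Inductively, each factor has the form $\br'(x_{bj},x_{ie})$. An off-diagonal pairing there would need $j=i$, so every factor is diagonal. This forces $b_k=j$, all $e_\bullet=i$, $c=i$, with value $q^{n\delta_{ij}}=1$. Then $\br(x_{ia_1}\cdots x_{ia_n},x_{id})$ splits into factors $\br(x_{ia_k},x_{ie})$, whose off-diagonal case needs $i<i$. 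Hence $a_k=i$ and $d=i$, with value $q^{-n}$, so $x_{ij}^n\ast x_{ij}=q^{-n}x_{ij}^{n+1}$ and the induction closes. The order matters: if you treat $\br(f_1,g_2)$ first with $c$ free, off-diagonal factors with $c>i$ are nonzero at that stage, and only the $\br'$ factor kills them. For comparison, the paper does not prove this lemma itself; it cites Cooke--Laugwitz. In the analogous Lemma \ref{lem: ast expansion 1} it uses your fallback, the inverse formula (\ref{eq:original product}). There the same diagonal-only support argument gives $x_{ij}^2=\br^{-1}(x_{ii},x_{ii})\,x_{ij}\ast x_{ij}\,\br(x_{jj},x_{ii})=q\,x_{ij}\ast x_{ij}$, consistent with the corrected direct computation.
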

Next, we will need a similar result for more general quantum minors:
\begin{lemma}\label{lem: ast expansion 1}
Let $I=\{1,2,...,k\}:=[k]$, and $J=\{j_1,...,j_k\}\subset\{1,...,N\}=[N]$. Then
\[
X_{IJ}^{n}=q^{\frac{1}{2}(n^2-n)\left(k-\lvert I\cap J\rvert\right)}X_{IJ}^{\ast n}.\]
\end{lemma}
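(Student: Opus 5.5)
We argue by induction on $n$, using associativity of $\ast$ to write $X_{IJ}^{\ast n}=X_{IJ}\ast X_{IJ}^{\ast(n-1)}$, and then expanding $X_{IJ}\ast g$ through the definition $f\ast g=\br(f_1,g_2)f_2g_3\br'(f_3,g_1)$. The target identity is
\[
X_{IJ}\ast X_{IJ}^{m} = q^{-m\left(k-\lvert I\cap J\rvert\right)}X_{IJ}^{m+1},\qquad I=[k],
\]
for every $m\ge 0$. Given this, induction yields $X_{IJ}^{\ast n}=q^{-(1+2+\dots+(n-1))(k-\lvert I\cap J\rvert)}X_{IJ}^{n}$. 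Since $1+\dots+(n-1)=\tfrac12(n^2-n)$, this is exactly the statement of the lemma. The case $n=1$ is trivial.

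To prove the target identity, we first expand the coproducts. The coproduct formula for quantum minors and multiplicativity of $\Delta$ give
\[
\Delta^{(2)}(X_{[k]J})=\sum_{K,K'}X_{[k]K}\otimes X_{KK'}\otimes X_{K'J},
\qquad
\Delta^{(2)}(X_{[k]J}^{m})=\text{products of } m \text{ such terms}.
\]
The key structural input is the triangular support of the bicharacters read off from their tables. The value $\br(x_{ab},x_{cd})$ is nonzero only if $a=b,\,c=d$ or $a=d<b=c$. Thus a nonzero off-diagonal contribution needs the first argument strictly upper and the second strictly lower, and $\br'$ has the same pattern. Using the multiplicativity rules $\br(a,bc)=\br(a_2,b)\br(a_1,c)$ and $\br'(ab,c)=\br'(a,c_2)\br'(b,c_1)$ (and their counterparts in the other slot), these values extend to products of generators and hence to minors. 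Consider a minor $X_{[k]L}$ with leading row set $[k]$. Every monomial in it has row indices $1,\dots,k$. For it to be paired nontrivially as the second argument of $\br$ or $\br'$ against anything, it must behave ``lower triangularly'', and I expect this to force $L=[k]$, after which only the diagonal part contributes.

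Applying this first to $g_1$, the factor $\br'(f_3,g_1)$ forces $g_1$ to be $X_{[k][k]}^m$ (more precisely, the $g_1$-leg of each tensor factor is the leading minor). Then $g_2$ again has row set $[k]$, and $\br(f_1,g_2)$ forces $g_2=X_{[k][k]}^m$ as well, so $g_3=X_{[k]J}^m=g$. Once the second arguments are leading minors, only diagonal ($a=b$) terms survive. This forces $f_1=X_{[k][k]}$ and $f_3=X_{JJ}$, hence $f_2=X_{[k]J}=f$. The surviving scalars are then the ``diagonal characters''
\[
\br(X_{[k][k]},X_{[k][k]}^{m})=q^{-km},\qquad \br'(X_{JJ},X_{[k][k]}^{m})=q^{m\lvert J\cap[k]\rvert}.
\]
These are computed by multiplicativity from $\br(x_{aa},x_{cc})=q^{-\delta_{ac}}$ and $\br'(x_{aa},x_{cc})=q^{\delta_{ac}}$, since only the diagonal monomial $x_{a_1a_1}\cdots$ of a diagonal minor pairs nontrivially. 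Multiplying gives the factor $q^{-m(k-\lvert I\cap J\rvert)}$.

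The main obstacle is rigorously justifying the vanishing claims: that $\br(\cdot,X_{[k]L}Y)$ and $\br'(\cdot,X_{[k]L}Y)$ vanish unless $L=[k]$, and that against leading minors only diagonal minors pair nontrivially. I would first try to prove this at the level of monomials, using the support pattern together with multiplicativity. Each application of multiplicativity distributes the second argument's generators over a coproduct of the first argument. An inversion count (the row indices of $X_{[k]L}$ are already minimal) should show that any term using an off-diagonal pairing must lower some row index below $1$ or leave $[k]$, which is impossible. If this gets messy, the fallback is to identify $\br$ restricted to rank-$k$ minors with the $R$-matrix on the $q$-exterior power $\Lambda_q^k\mc^N$, which is triangular with respect to the dominance order on $k$-subsets with minimum $[k]$. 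That identification gives the vanishing directly.
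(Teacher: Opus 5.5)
Your proposal is correct and is essentially the paper's argument run in the opposite direction. The paper expands the ordinary product $X_{IJ}\cdot X_{IJ}^{n-1}$ via $fg=\br^{-1}(f_1,g_1)f_2\ast g_3\br(f_3,g_2)$ and uses Lemma 2.7 of \cite{DCM1} to kill every term except the one with the intermediate index sets equal to $I$ (resp.\ $J$); this gives $X_{IJ}^n=q^{(n-1)(k-\lvert I\cap J\rvert)}X_{IJ}\ast X_{IJ}^{n-1}$, which is exactly your identity $X_{IJ}\ast X_{IJ}^m=q^{-m(k-\lvert I\cap J\rvert)}X_{IJ}^{m+1}$, obtained there from the inverse formula rather than from the defining formula with $\br,\br'$.

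The vanishing you flag as the main obstacle is what the paper takes from \cite{DCM1}, and your monomial-level route does close it quickly. Iterating $\br(ab,c)=\br(a,c_1)\br(b,c_2)$ shows $\br(F,x_{cd})=0$ unless $c\geq d$, and likewise for $\br'$. Hence a monomial $x_{1l_{\sigma(1)}}\cdots x_{kl_{\sigma(k)}}$ of $X_{[k]L}$ pairs nontrivially in the second slot only if $l_{\sigma(s)}=s$ for all $s$, i.e.\ $L=[k]$ and only the diagonal monomial survives.
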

\begin{proof}
Starting with $n=2$, and using the product formula in Equation \ref{eq:original product}, we have
\[
X_{IJ}^2 = \sum\limits_{A,B,C,D}\br^{-1}(X_{IA},X_{IC})X_{AB}\ast X_{DJ}\br(X_{BJ},X_{CD}),\]
where the sum is over all subsets $A,B,C,D\subset\{1,...,N\}$ of size $k$. In what follows, we switch to using Sweedler notation. By \cite{DCM1}[Lemma 2.7], we see that the only non-zero terms will occur when $A=C=I$, which in turn requires $D=I$ and $B=J$.
Hence it simplifies to
\[
X_{IJ}^{2} = \br^{-1}(X_{II},X_{II})X_{IJ}\ast X_{IJ}\br(X_{JJ},X_{II}).\]
We further have
\[
\br^{-1}(X_{II},X_{II}) = q^{k}, \qquad \br(X_{JJ},X_{II}) = q^{-\lvert I\cap J\rvert},\]
and so get
\[
X_{IJ}^{2} = q^{k-\lvert I\cap J\rvert} X_{IJ}^{\ast 2}.\]
Next, for general $n$, we have
\[
X_{IJ}^{n} = \br^{-1}(X_{IA},X_{IC_1}...X_{IC_{n-1}})X_{AB}\ast(X_{D_1J}...X_{D_{n-1}J})\br(X_{BJ},X_{C_1D_1}...X_{C_{n-1}D_{n-1}})\]
\begin{gather*}
=\br^{-1}(X_{IA_1},X_{IC_1})\br^{-1}(X_{A_1A_2},X_{IC_2})...\br^{-1}(X_{A_{n-2}A_{n-1}},X_{IC_{n-1}})X_{A_{n-1}B}\ast(X_{D_1J}...X_{D_{n-1}J}) \times \\ \times \br(X_{B_1B_2},X_{C_{n-1}D_{n-1}})...\br(X_{B_{n-2}B_{n-1}},X_{C_2D_2})\br(X_{B_{n-1}J},X_{C_1D_1}).
\end{gather*}
Again by \cite{DCM1}[Lemma 2.7], we must have
\[
I = C_1=...=C_{n-1} = A_1=...=A_{n-1},\]
which in turn forces 
\[
I = D_1 =...= D_{n-1}, \qquad J=B_1=...=B_{n-1}.\]
The above relation therefore simplifies to 
\[
X_{IJ}^n = q^{(n-1)(k-\lvert I\cap J\vert)}X_{IJ}\ast(X_{IJ}^{n-1}),\]
and hence by induction we get
\[
X_{IJ}^{n} = q^{\frac{1}{2}(n^2-n)\left(k-\lvert I\cap J\rvert\right)}X_{IJ}^{\ast n}.\]
\end{proof}
In particular, we note that for $I=J=[k]$, the lemma gives
\[
X_{[k],[k]}^{n} = X_{[k],[k]}^{\ast n}.\]

\begin{lemma}\label{lem: ast expansion 3}
Let $1\leq i,j\leq N$ and $n\in\mn$. Then	
\[
x_{Ni}^n X_{[N-1],[N]\setminus\{j\}}^n = q^{n^2(\delta_{iN}-1)}x_{Ni}^n \ast X_{[N-1],[N]\setminus\{j\}}^n.\]
\end{lemma}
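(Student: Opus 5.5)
We apply Equation \eqref{eq:original product} with $f=x_{Ni}^n$ and $g=X_{[N-1],[N]\setminus\{j\}}^n$, following the proof of Lemma \ref{lem: ast expansion 1}. Write
\[
fg=\br^{-1}(f_1,g_1)\,f_2\ast g_3\,\br(f_3,g_2).
\]
The iterated coproducts are
\[
\Delta^{(2)}(x_{Ni}^n)=\sum x_{Na_1}\cdots x_{Na_n}\otimes x_{a_1b_1}\cdots x_{a_nb_n}\otimes x_{b_1i}\cdots x_{b_ni},
\]
and similarly for $g$, with sums over $(N-1)$-subsets $C_r,D_r$ for each of the $n$ factors $X_{[N-1]C_r}\otimes X_{C_rD_r}\otimes X_{D_r,[N]\setminus\{j\}}$.

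First we kill the left factor $\br^{-1}(x_{Na_1}\cdots x_{Na_n},X_{[N-1]C_1}\cdots X_{[N-1]C_n})$. Using the multiplicativity rules for $\br^{-1}$, we split it into products of values $\br^{-1}(x_{ab},X_{KL})$. By the triangularity of $\br^{\pm1}$ on quantum minors (the $\br$ values vanish unless the row index and column index are ``moved downward/upward'' compatibly; this is \cite{DCM1}[Lemma 2.7], extended to pairs of a generator and a minor), the row $N$ of $f$ can only pair nontrivially with row set $[N-1]$ if all intermediate indices are diagonal. This should force $a_r=N$ and $C_r=[N-1]$ for all $r$.

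Next we treat the right factor $\br(x_{b_1i}\cdots x_{b_ni},X_{C_1D_1}\cdots X_{C_nD_n})$ in the same way. With $C_r=[N-1]$, nonvanishing requires either diagonal pairing or the off-diagonal term $b=c,a=d$, which for a minor of rank $N-1$ does not exist except in degenerate ways. We expect this to force $b_r=i$ and $D_r=[N]\setminus\{j\}$. The sum then collapses to
\[
fg=\br^{-1}(x_{NN}^n,X_{[N-1],[N-1]}^n)\,f\ast g\,\br(x_{ii}^n,X_{[N-1],[N-1]}^n).
\]
The scalars are computed from $\br^{\pm1}(x_{aa},x_{cc})=q^{\mp\delta_{ac}}$ together with multiplicativity, which gives $\br(x_{aa},X_{KK})=q^{-\delta_{a\in K}}$. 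This yields $\br^{-1}(x_{NN},X_{[N-1],[N-1]})=q^{0}$ and $\br(x_{ii},X_{[N-1],[N-1]})=q^{-(1-\delta_{iN})}$. Raising to $n$-th powers in each argument contributes $n\cdot n$ pairings, so the total factor is $q^{n^2(\delta_{iN}-1)}$, as claimed.

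The main obstacle is the vanishing argument for the cross terms. We need a version of \cite{DCM1}[Lemma 2.7] for $\br(x_{ab},X_{KL})$ and $\br^{-1}(x_{ab},X_{KL})$ with a row/column-$N$ generator against an $(N-1)$-minor. We would check this directly via the expansion formula \eqref{eq: quantum minor expansion}, showing that $\br(x_{ab},X_{KL})$ is nonzero only if $a=b$, $K=L$, or $K\setminus L=\{b\}$, $L\setminus K=\{a\}$ with $a<b$. We would then track the index bookkeeping through the $n$-fold products by induction on $n$, as in Lemma \ref{lem: ast expansion 1}.
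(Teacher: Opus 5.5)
Your proposal follows the paper's own proof. Like the paper, you expand $x_{Ni}^nX_{[N-1],[N]\setminus\{j\}}^n$ via Equation \eqref{eq:original product} with the braided product in the middle, split $\br^{\pm1}$ multiplicatively, and use the triangularity of \cite{DCM1}[Lemma 2.7] to force $a_r=N$, $b_r=i$ and $C_r=D_r=[N-1]$; the surviving term then carries the factor $\bigl(\br^{-1}(x_{NN},X_{[N-1],[N-1]})\,\br(x_{ii},X_{[N-1],[N-1]})\bigr)^{n^2}=q^{n^2(\delta_{iN}-1)}$.

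There is one slip. The surviving middle index is $D_r=[N-1]$, not $[N]\setminus\{j\}$; with $D_r=[N]\setminus\{j\}$ the third tensor factor $g_3$ would not be $X_{[N-1],[N]\setminus\{j\}}^n$. Your collapsed formula already uses $D_r=[N-1]$, so this is a typo rather than a gap.
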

\begin{proof}
By associativity of the respective products, we can start the expansion in the middle. We have
\begin{gather*}
x_{Ni}^n X_{[N-1],[N]\setminus\{j\}}^n = \sum\limits_{a_l,b_l,C_l,D_l}\br^{-1}(x_{Na_1}...x_{Na_{n}},X_{[N-1],C_1}...X_{[N-1],C_{n}}) \times \\ \times (x_{a_1b_1}...x_{a_{n}b_{n}})\ast(X_{D_1,[N]\setminus\{j\}.}...X_{D_{n},[N]\setminus\{j\}})\br(x_{b_1i}...x_{b_{n}i},X_{C_1D_1}...X_{C_{n}D_{n}}).		
\end{gather*}
Expanding this out, we get
\begin{gather*}
= \sum\br^{-1}(x_{Na_{1,1}}...x_{Na_{n,1}},X_{[N-1],C_1})...\br^{-1}(x_{a_{1,n-1}a_{1,n}}...x_{a_{n,n-1}a_{n,n}},X_{[N-1],C_{n}})(x_{a_1b_1}...x_{a_{n}b_{n}})\ast \\ \ast (X_{D_1,[N]\setminus\{j\}.}...X_{D_{n-1},[N]\setminus\{j\}})\br(x_{b_{1,1}b_{1,2}}...x_{b_{n,1}b_{n,2}},X_{C_{n}D_{n}})...\br(x_{b_{1,n}i}...x_{b_{n,n}i},X_{C_1D_1}),
\end{gather*}
where $a_{i,n}=a_i$, $b_{i,1}=b_i$. Expanding further gives
\begin{gather*}
= \sum\br^{-1}(x_{Na_{1,1}},X_{C_{1,n-1},C_{1,n}})...\br^{-1}(x_{Na_{n,1}},X_{[N-1],C_{1,1}})...\br^{-1}(x_{Na_{n,n}},X_{[N-1],C_{n,1}})\times \\ \times
(x_{a_1b_1}...x_{a_{n}b_{n}})\ast (X_{D_1,[N]\setminus\{j\}.}...X_{D_{n},[N]\setminus\{j\}})\times \\ \times
\br(x_{b_{1,1}b_{1,2}},X_{C_{n}D_{n,1}})...\br(x_{b_{n,1}b_{n,2}},X_{D_{n,n-1}D_{n,n}})...\br(x_{b_{n,n}i},X_{D_{1,n-1}D_{1,n}}),
\end{gather*}
where $C_{i,n}=C_i$, $D_{i,n}=D_i$. Again by \cite{DCM1}[Lemma 2.7], we see that the only non-zero terms occur when 
\[
a_{l,m}=N, \quad b_{l,m}=i, \quad C_{l,m}=D_{l,m}=[N-1]. \]
Hence it reduces to
\[
x_{Ni}^nX_{[N-1],[N]\setminus\{j\}} = \left(\br^{-1}(x_{NN},X_{[N-1],[N-1]})\br(x_{ii},X_{[N-1],[N-1]})\right)^{n^2}(x_{Ni}^n)\ast(X_{[N-1],[N]\setminus\{j\}}^n),\]
which then gives the required identity.
\end{proof}
For $1\leq i<N$ and $1\leq j\leq N$, we can combine Lemmas \ref{lem: ast expansion 2}. \ref{lem: ast expansion 1}, and \ref{lem: ast expansion 3} to get the following identity:
\begin{equation}
x_{Ni}^{n}X_{[N-1],[N]\setminus\{j\}}^n = q^{\frac{1}{2}\delta_{jN}(n-n^2)-n}x_{Ni}^{\ast n}\ast X_{[N-1],[N]\setminus\{j\}}^{\ast n}.
\end{equation}
The final lemma we need is the following:
\begin{lemma}\label{lem: ast expansion 4}
When $q$ is a root of unity, 
\[
x_{NN}^{2\tp} = (x_{NN}^{\tp})\ast(x_{NN}^{\tp})+\sum\limits_{i,j=1}^{N-1}c_{i,j}(x_{Ni}^{\tp})\ast(x_{jN}^{\tp}),\]
for some coefficients $c_{i,j}$.
\end{lemma}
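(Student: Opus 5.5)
We expand $x_{NN}^{2\tp}=x_{NN}^{\tp}\cdot x_{NN}^{\tp}$ using the expression of the original product in terms of the braided product, Equation \ref{eq:original product}:
\[
x_{NN}^{\tp}x_{NN}^{\tp}=\sum \br^{-1}\big((x_{NN}^{\tp})_1,(x_{NN}^{\tp})_1\big)\,(x_{NN}^{\tp})_2\ast(x_{NN}^{\tp})_3\,\br\big((x_{NN}^{\tp})_3,(x_{NN}^{\tp})_2\big).
\]
The first ingredient is the coproduct of $x_{NN}^{\tp}$. At a root of unity the $q$-binomial coefficients vanish, exactly as in the identity $\Delta(x_{ij}^{\tp})=\sum_r x_{ir}^{\tp}\otimes x_{rj}^{\tp}$ quoted after Definition \ref{def: bogl}. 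Iterating this gives
\[
\Delta^{(2)}(x_{NN}^{\tp})=\sum_{a,b}x_{Na}^{\tp}\otimes x_{ab}^{\tp}\otimes x_{bN}^{\tp}.
\]
I would use this in all four tensor slots, so that every middle term has the form $x_{ab}^{\tp}\ast x_{cd}^{\tp}$ with scalar coefficient $\br^{-1}(x_{Na}^{\tp},x_{Nc}^{\tp})\,\br(x_{bN}^{\tp},x_{dN}^{\tp})$. More precisely, the coefficient is a product of Sweedler-split evaluations. We still need the $\tp$-th power coproduct for the second leg, and then $\br$ is evaluated using the multiplicativity rules for $\br$ and $\br^{-1}$.

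Next I would determine which index patterns survive. The values of $\br$ and $\br^{-1}$ on generators are nonzero only when both arguments are diagonal, or when they are a transposed pair $x_{ab},x_{ba}$ with $a<b$. Multiplicativity then shows that $\br^{-1}(x_{Na}^{\tp},x_{Nc}^{\tp})$ is supported on the pattern analysed in \cite{DCM1}[Lemma 2.7]. Since $N$ is the largest index, the off-diagonal contributions $\delta_{N<\cdot}$ vanish when the first argument's row is $N$ in the generator computation. Consequently $\br^{-1}(x_{Na}^{\tp},\cdot)$ forces $a=N$ unless the off-diagonal term from the second argument survives, and that would require $x_{Nc}$ with $N<c$, which is impossible. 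The same argument on the $\br$ side forces $d=N$.

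The surviving terms therefore have the shape $x_{Nb}^{\tp}\ast x_{cN}^{\tp}$. The term $b=c=N$ gives $(x_{NN}^{\tp})\ast(x_{NN}^{\tp})$ with coefficient $\br^{-1}(x_{NN}^{\tp},x_{NN}^{\tp})\br(x_{NN}^{\tp},x_{NN}^{\tp})=q^{\tp^2}q^{-\tp^2}=1$. All remaining terms are absorbed into the coefficients $c_{i,j}$. The coefficient is in fact diagonal-forced for mixed cases: if exactly one of $b,c$ equals $N$, the pairing pattern forces the other to equal $N$ as well. This leaves exactly $\sum_{i,j<N}c_{i,j}x_{Ni}^{\tp}\ast x_{jN}^{\tp}$.

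The main obstacle is the support analysis of $\br^{\pm1}$ on products of $\tp$ generators. In particular, I must show that the mixed terms (e.g.\ $x_{NN}^{\tp}\ast x_{jN}^{\tp}$ with $j<N$) have zero coefficient rather than merely some coefficient. To handle this I would track weights: $\br(a,b)$ for homogeneous $a,b$ vanishes unless the weights match appropriately. The Lemma 2.7-type argument then pairs row/column weights between the two arguments, which rules out the unbalanced terms. I do not expect the explicit values of $c_{i,j}$ to be needed.
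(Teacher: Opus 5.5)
Your argument is correct and is essentially the paper's proof, reorganized. The paper expands $x_{NN}^{n}\cdot x_{NN}^{n}$ with generator-level Sweedler sums for general $n$. It forces $a_{l,m}=c_{l,m}=N$ through the triangularity of $\br^{-1}$, and only then sets $n=\tp$, collapsing the sums over $b_1,\dots,b_n$ and $d_1,\dots,d_n$ with the ``reordering coefficients'' computed in $Y_N$.

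You instead use $\Delta(x_{ij}^{\tp})=\sum_r x_{ir}^{\tp}\otimes x_{rj}^{\tp}$ from the start. This is the same fact. The summands $x_{Nb}\otimes x_{bN}$ of $\Delta(x_{NN})$ pairwise $q^2$-commute in $\omn\otimes\omn$, so $(\sum_b x_{Nb}\otimes x_{bN})^{\tp}=\sum_b x_{Nb}^{\tp}\otimes x_{bN}^{\tp}$, which is exactly what the $Y_N$ computation establishes.

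Your packaging is cleaner, since the paper only sketches how to reorder simultaneously inside the $\ast$-factor and inside the argument of $\br$. You should say explicitly that the identity holds already in $\omn$, not just in $\bogl$, because the lemma is an identity in $\omn$. You also treat the mixed terms, which the paper drops without comment when it restricts to $i,j\leq N-1$.

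There is one bookkeeping slip to fix. Take $\sum_{a,b}x_{Na}^{\tp}\otimes x_{ab}^{\tp}\otimes x_{bN}^{\tp}$ for $f$ and $\sum_{c,d}x_{Nc}^{\tp}\otimes x_{cd}^{\tp}\otimes x_{dN}^{\tp}$ for $g$. Then Equation \ref{eq:original product} produces the terms
\[
\br^{-1}(x_{Na}^{\tp},x_{Nc}^{\tp})\,x_{ab}^{\tp}\ast x_{dN}^{\tp}\,\br(x_{bN}^{\tp},x_{cd}^{\tp}),
\]
not $x_{ab}^{\tp}\ast x_{cd}^{\tp}$ with $\br(x_{bN}^{\tp},x_{dN}^{\tp})$. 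Read literally, your version would force all of $a,b,c,d$ to equal $N$ by the same weight argument, which contradicts the shape you then claim.

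With the correct legs, the weight condition for $\br^{\pm1}$ forces $a=c=N$, with value $q^{\tp^2}$. (The condition: the total row weight of the two arguments equals their total column weight.) No separate ``$\br$ side'' argument for $d$ is needed, because $g_3=x_{dN}^{\tp}$ already has column index $N$. You then get the paper's intermediate formula $\sum_{b,d}q^{\tp^2}x_{Nb}^{\tp}\ast x_{dN}^{\tp}\,\br(x_{bN}^{\tp},x_{Nd}^{\tp})$. The weight condition now forces $b=d$, which kills the mixed terms as you intended and leaves coefficient $q^{\tp^2}q^{-\tp^2}=1$ on $x_{NN}^{\tp}\ast x_{NN}^{\tp}$.

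Your route also gives a stronger statement. Write $x_{iN}^{\tp}=x_{iN}\cdot x_{iN}^{\tp-1}$ and use $\Delta(x_{Ni}^{\tp})=\sum_r x_{Nr}^{\tp}\otimes x_{ri}^{\tp}$. This gives
\[
\br(x_{iN}^{\tp},x_{Ni}^{\tp})=\sum_r\br(x_{iN},x_{Nr}^{\tp})\br(x_{iN}^{\tp-1},x_{ri}^{\tp}).
\]
For $i<N$, each factor $\br(x_{iN},x_{Nr}^{\tp})$ violates the weight condition, since the coefficient of $e_i$ is $1$ on one side and $\tp\delta_{ri}$ on the other, and $\tp\geq 2$. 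Hence every $c_{i,j}$ vanishes, and in fact $x_{NN}^{2\tp}=x_{NN}^{\tp}\ast x_{NN}^{\tp}$.
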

\begin{proof}
We first consider $x_{NN}^{2n}$ for a general choice of $n\in\mn$. Applying the braided product formula in the middle, we have
\[
x_{NN}^{2n} = \sum\br^{-1}(x_{Na_1}...x_{Na_n},x_{Nc_1}...x_{Nc_n})(x_{a_1b_1}...x_{a_nb_n})\ast(x_{d_1N}...x_{d_nN})\br(x_{b_1N}...x_{b_nN},x_{c_1d_1}...x_{c_nd_n}).\]
Expanding the $\br^{-1}$ term first, we get
\begin{gather*}
=\sum\br^{-1}(x_{Na_{1,1}},x_{c_{1,n-1}c_{1,n}})...\br^{-1}(x_{Na_{n,1}},x_{Nc_{1,1}})...\br^{-1}(x_{Na_{n,n}},x_{Nc_{n,1}})\times \\ \times
(x_{a_1b_1}...x_{a_nb_n})\ast(x_{d_1N}...x_{d_nN})\br(x_{b_1N}...x_{b_nN},x_{c_1d_1}...x_{c_nd_n}),
\end{gather*}
where $a_{i,n}=a_i$, $c_{i,1}=c_i$. We see the only non-zero terms occur when $a_{l,m}=c_{l,m}=N$, and so it becomes
\begin{gather}\label{eq: lemma partial split}
x_{NN}^{2n} = \sum\limits_{\substack{b_1,...,b_n=1\\ d_1,...,d_n=1}}^{N}q^{n^2}(x_{Nb_1}...x_{Nb_n})\ast(x_{d_1N}...x_{d_nN})\br(x_{b_1N}...x_{b_nN},x_{Nd_1}...x_{Nd_n}).
\end{gather}	
To motivate how we will simplify this, we first consider a simple case with $n=N=2$. Then
\[
q^{-4}X_{22}^{4} = \sum\limits_{1\leq b_1,b_2,d_1,d_2\leq 2}(x_{2b_1}x_{2b_2})\ast(x_{d_12}x_{d_22})\br(x_{b_12}x_{b_22},x_{2d_1}x_{2d_2}).\]
There will be four terms where $\{b_1,b_2\}=\{d_1,d_2\}=\{1,2\}$. They are
\begin{gather*}
(x_{22}x_{21})\ast(x_{22}x_{12})\br(x_{22}x_{12},x_{22}x_{21}), \quad (x_{21}x_{22})\ast(x_{22}x_{12})\br(x_{12}x_{22},x_{22}x_{21}), \\ (x_{22}x_{21})\ast(x_{12}x_{22})\br(x_{22}x_{12},x_{21}x_{22}), \quad (x_{21}x_{22})\ast(x_{12}x_{22})\br(x_{12}x_{22},x_{21}x_{22}).
\end{gather*} 
Using the relations
\[
x_{ij}x_{ik}=qx_{ik}x_{ij}, \quad x_{ji}x_{ki}=qx_{ki}x_{ji}, ~ \text{ for } j<k,\]
We can simplify those four terms to get
\[
(1+q^2)^2(x_{22}x_{21})\ast(x_{22}x_{12})\br(x_{22}x_{12},x_{22}x_{21}).\]
We then see that for $q=\pm i$, the coefficient equals zero.
We now want to consider this more generally. Starting from equation \ref{eq: lemma partial split}, consider a fixed choice of positive integers satisfying
\[
N\geq b_1\geq b_2\geq...\geq b_n\geq 1, \quad N\geq d_1\geq d_2\geq ...\geq d_2\geq 1.\]
Then Equation \ref{eq: lemma partial split} will contain a sub-sum over all permutations of these integers. By simplifying using the above $\omn$ relations, we will obtain a term of the form
\[
C_{b_1,...b_n}C_{d_1,...,d_n}(x_{Nb_1}...x_{Nb_n})\ast(x_{d_1N}...x_{d_nN})\br(x_{b_1N}...x_{b_nN},x_{Nd_1}...x_{Nd_n}).\]
We call the coefficients $C_{b_1,...,b_n}$, $C_{d_1,...,d_n}$, \textit{reordering coefficients}. Our aim is to show that in the case $n=\tp$, the majority of reordering coefficients will evaluate to zero.

To calculate the reordering coefficients, consider the algebra $Y_N$ generated by elements $y_1,...,y_N$ with relation
\[
y_iy_j = q^2 y_jy_i ~ \text{ for }i<j.\]
Then it can be seen that the reordering coefficient $C_{b_1,...,b_n}$ is given by the corresponding coefficient of the term $y_{b_1}...y_{b_n}$ in the expansion of 
\[
(y_1+...+y_N)^n.\]
To describe this expansion, write $y_{1,k}:=\sum\limits_{i=1}^{k}y_i$. Then following \cite{ParshallWang}[Chapter 7],
the expansion satisfies
\[
(y_1+...+y_N)^n = (y_{1,N-1}+y_N)^n = \sum\limits_{m=0}^{n}\binom{n}{m}_{q^2}y_{N}^{m}y_{1,N-1}^{n-m},\]
where $\binom{n}{m}_{q^2}$ is a quantum binomial coefficient. We now specialize to the case $n=\tp$. Then by \cite{ParshallWang}[Corollary 7.1.3], we have
\[
\binom{\tp}{m}_{q^2}=0 \quad \text{ for } 0<m<\tp, \qquad \binom{\tp}{0}_{q^2}=\binom{\tp}{\tp}_{q^{2}}=1.\]
Hence it simplifies to 
\[
(y_1+...+y_N)^{\tp} = y_{1,N-1}^{\tp}+y_{N}^{\tp}.\]
By induction on $y_{1,k}$, we then get
\[
(y_1+...+y_N)^{\tp}=\sum\limits_{i=1}^{N}y_i^{\tp}.\]
We see from this that the only non-zero reordering coefficients in Equation \ref{eq: lemma partial split}  will be $C_{i,i,...,i}$, for $1\leq i\leq N$. Hence the equation simplifies to give
\[
x_{NN}^{2\tp} = \sum\limits_{i,j=1}^{N}q^{\tp^2}(x_{Ni}^{\tp})\ast(x_{jN}^{\tp})\br(x_{iN}^{\tp},x_{Nj}^{\tp}),\]
\[
= (x_{NN}^{\tp})\ast(x_{NN}^{\tp})+\sum\limits_{i,j=1}^{N-1}q^{\tp^2}(x_{Ni}^{\tp})\ast(x_{jN}^{\tp})\br(x_{iN}^{\tp},x_{Nj}^{\tp}).\]
\end{proof}
We can now proceed to giving our presentation of $\boh$:
\begin{thm}\label{thm: alternative presentation}
The small reflection equation algebra $\boh$ is given as a quotient of $\oh$ by the relations
\[
Z_{[k],[k]}^p=1, ~ 1\leq k\leq N, \qquad z_{ij}^{\tp}=0, ~ i\neq j.\]
\end{thm}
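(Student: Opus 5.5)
Let $\mathcal{J}\subseteq\oh$ be the ideal generated by $\Phi^{-1}(\phi_\ast(x_{ii}^p))-1$ and $\Phi^{-1}(\phi_\ast(x_{ij}^{\tp}))$ for $i\neq j$. Let $\mathcal{J}'$ be the ideal generated by $Z_{[k],[k]}^p-1$ and $z_{ij}^{\tp}$ for $i\neq j$. The aim is to show $\mathcal{J}=\mathcal{J}'$. Because $\Phi^{-1}\circ\phi_\ast$ is only a linear isomorphism, I will work throughout in $(\omn,\ast)$. There I must show that the $\ast$-ideal generated by $\{x_{ii}^p-1,\ x_{ij}^{\tp}\}$ coincides with the $\ast$-ideal generated by $\{X_{[k],[k]}^{\ast p}-1,\ x_{ij}^{\ast\tp}\}$.

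The off-diagonal generators are immediate. Lemma \ref{lem: ast expansion 2} gives $x_{ij}^{\tp}=q^{\frac12(\tp^2-\tp)}x_{ij}^{\ast\tp}$, and $\Phi^{-1}(x_{ij}^{\ast\tp})=z_{ij}^{\tp}$. So the two off-diagonal families agree up to nonzero scalars.

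The diagonal generators are handled by induction on $k$, using the leading minors. Lemma \ref{lem: ast expansion 1} with $I=J=[k]$ gives $X_{[k],[k]}^{p}=X_{[k],[k]}^{\ast p}$, so the generators $Z_{[k],[k]}^p-1$ correspond to $X_{[k],[k]}^p-1$ computed in the ordinary product. I then apply Proposition \ref{prop: power of quantum determinant, minor expansion} inside $\mathcal{O}_q(M_k)\subseteq\omn$:
\[
X_{[k],[k]}^{\tp}=\sum_{i=1}^{k}\lambda^{i-1}x_{ki}^{\tp}X_{[k-1],[k]\setminus\{i\}}^{\tp}.
\]
\begin{itemize}
\item For $i<k$, the term contains $x_{ki}^{\tp}$. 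By the identity after Lemma \ref{lem: ast expansion 3} (applied with $k$ in place of $N$), it equals a nonzero scalar times $x_{ki}^{\ast\tp}\ast X^{\ast\tp}_{[k-1],[k]\setminus\{i\}}$, which lies in either ideal. (Here I use that the off-diagonal generators are the same in both ideals.)
\item The $i=k$ term is, by Lemma \ref{lem: ast expansion 3} with $\delta_{iN}=1$, exactly $x_{kk}^{\tp}\ast X_{[k-1],[k-1]}^{\tp}$.
\end{itemize}
Thus, modulo off-diagonal generators, $X_{[k],[k]}^{\tp}\equiv x_{kk}^{\tp}\ast X_{[k-1],[k-1]}^{\tp}$. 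For odd roots ($p=\tp$) the induction is then straightforward. Assume $X_{[k-1],[k-1]}^p\equiv1$ in both ideals, and note that $X_{[k-1],[k-1]}^p$ is $\ast$-invertible modulo the ideal. Then $X_{[k],[k]}^p\equiv1$ holds if and only if $x_{kk}^p\equiv1$. The base case is $k=1$, where $x_{11}^p=x_{11}^{\ast p}$.

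For even roots, $p=2\tp$ and I must pass from $\tp$-th powers to $p$-th powers.
\begin{itemize}
\item Lemma \ref{lem: ast expansion 4} gives $x_{kk}^{p}=x_{kk}^{\tp}\ast x_{kk}^{\tp}+\sum c_{ij}x_{ki}^{\tp}\ast x_{jk}^{\tp}$. This should be used inside the subalgebra for $[k]$; the proof of that lemma works verbatim with $N$ replaced by $k$. The correction terms lie in the off-diagonal ideal, so $x_{kk}^p\equiv (x_{kk}^{\tp})^{\ast2}$.
\item Similarly, $X_{[k],[k]}^{p}=X_{[k],[k]}^{\tp}\ast X_{[k],[k]}^{\tp}$ holds exactly by Lemma \ref{lem: ast expansion 1}.
\item Squaring the congruence $X_{[k],[k]}^{\tp}\equiv x_{kk}^{\tp}\ast X_{[k-1],[k-1]}^{\tp}$ in $\ast$ then requires that $x_{kk}^{\tp}$ and $X_{[k-1],[k-1]}^{\tp}$ $\ast$-commute modulo the ideal, possibly up to a sign that squares away.
\end{itemize}
This commutation is the step I expect to be the main obstacle. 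I would try to obtain it from the $q$-commutation of leading minors in $\oh$ with images of $x_{kk}$, or else from Lemma \ref{lem: ast expansion 3} applied in both orders together with centrality and anticommutation of $\tp$-th powers in $\omn$. Granting it, $X_{[k],[k]}^p\equiv x_{kk}^p\ast X_{[k-1],[k-1]}^p$, and the same induction closes the argument. In both parities this shows that each generating set lies in the other's ideal, so the ideals coincide and the presentation follows.
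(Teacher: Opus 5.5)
Your strategy is the paper's. You expand $X_{[k],[k]}^{\tp}$ via Proposition \ref{prop: power of quantum determinant, minor expansion} and convert to $\ast$-products with Lemmas \ref{lem: ast expansion 2}--\ref{lem: ast expansion 3}. This gives, modulo the off-diagonal generators,
\[
Z_{[k],[k]}^{\tp}\equiv w_k\,Z_{[k-1],[k-1]}^{\tp},\qquad w_k:=\Phi^{-1}(\phi_{\ast}(x_{kk}^{\tp})),
\]
and at even roots Lemma \ref{lem: ast expansion 4} adds $\Phi^{-1}(\phi_{\ast}(x_{kk}^{p}))\equiv w_k^2$. Your odd-root argument is complete, and your two-inclusion bookkeeping is cleaner than the paper's.

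At even roots, however, the commutation you grant is a genuine gap. Without it you only have $Z_{[k],[k]}^{p}\equiv w_kZ_{[k-1],[k-1]}^{\tp}w_kZ_{[k-1],[k-1]}^{\tp}$, and neither of your suggested routes supplies it as stated. Relation \eqref{eqn: Zkk q commute} is about quantum minors $Z_{IJ}$, and $w_k$ is not one. Lemma \ref{lem: ast expansion 3} is only proved with the $x$-factor on the left. In the reverse order the expansion does not collapse to a single term, since $\br^{-1}(x_{ia},x_{kc})\neq 0$ for $i<k$, $a=k$, $c=i$, so that route would need a new computation.

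The missing idea is to eliminate $w_k$ rather than commute it. Modulo either ideal, $Z_{[k-1],[k-1]}^{\tp}$ is invertible, because its square $Z_{[k-1],[k-1]}^{p}$ is $\equiv 1$: by definition for $\mathcal{J}'$, and by your induction for $\mathcal{J}$. Hence $w_k\equiv Z_{[k],[k]}^{\tp}Z_{[k-1],[k-1]}^{-\tp}$. The leading minors commute in $\oh$, since \eqref{eqn: Zkk q commute} with $I=J=[k-1]$ has exponent $0$. Therefore
\[
\Phi^{-1}(\phi_{\ast}(x_{kk}^{p}))\equiv w_k^2\equiv Z_{[k],[k]}^{\tp}Z_{[k-1],[k-1]}^{-\tp}Z_{[k],[k]}^{\tp}Z_{[k-1],[k-1]}^{-\tp}=Z_{[k],[k]}^{p}Z_{[k-1],[k-1]}^{-p}.
\]
This gives $\Phi^{-1}(\phi_{\ast}(x_{kk}^{p}))-1\in\mathcal{J}'$ directly, and $Z_{[k],[k]}^{p}-1\in\mathcal{J}$ inductively. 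It is exactly the step the paper uses implicitly in the last display of its proof. With it inserted, your argument is complete.
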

\begin{proof}
We first note that the relations defining $\boh$ are given by taking the corresponding relations defining $\bogl$ and applying $\Phi^{-1}\circ\phi_{\ast}$ to them. In particular, $\phi_{\ast}$ only depends on $\br$ and $\br^{-1}$, and so is valid when specialized to any root of unity.

The relations
\begin{gather}\label{eq: partial boh rels}
Z_{[1],[1]}^p=z_{11}^p=1, \qquad z_{ij}^{\tp}=0 ~ \text{ for } ~ i\neq j
\end{gather} 
are the same as those given in \cite{CookeLaugwitz}, and follow similarly for even roots of unity via Lemmas \ref{lem: ast expansion 2} and \ref{lem: ast expansion 1}. We also note that combining Proposition \ref{prop:power of small quantum determinant}, along with the fact that $Z_{IJ}=\Phi^{-1}(\phi_{\ast}(X_{IJ})))$ by definition, we see that $Z_{[k],[k]}$ is invertible in $\boh$ for all $k\in\mn$. It follows that all that remains now is to show that the relations $Z_{[k],[k]}^{p}=1$ are equivalent to those given by $\Phi^{-1}(\phi_{\ast}(x_{kk}^{p}))=1$.\\

We first note that in $\bogl$, by Proposition \ref{prop:power of small quantum determinant}, the relations $x_{kk}^{p}=1$, $1\leq k\leq N$, are in fact equivalent to the relations $X_{[k],[k]}^p=1$. Hence we want to obtain a version of this relation in $\boh$. To do so, we use the identity in Proposition \ref{prop: power of quantum determinant, minor expansion}, and apply the braided product to it. The identity in the proposition is
\begin{gather*}
D_q^{\tp} = \sum\limits_{i=1}^{N}(\pm 1)^{i-1}x_{Ni}^{\tp}X_{[N-1],[N]\setminus\{i\}}^{\tp}.
\end{gather*}
We then want to consider the corresponding relation in $\oh$, i.e. compute $\Phi^{-1}(\phi_{\ast}(D_q^{\tp}))$. Using Lemmas \ref{lem: ast expansion 2}, \ref{lem: ast expansion 1}, and \ref{lem: ast expansion 3}, gives
\[
D_q^{\ast \tp} = \phi_{\ast}(x_{NN}^{\tp})\ast X_{[N-1],[N-1]}^{\ast\tp}+\sum\limits_{i=1}^{N-1}\pm x_{Ni}^{\ast \tp}\ast X_{[N-1],[N]\setminus\{i\}}^{\ast \tp}.\]
Further applying the isomorphism $\Phi^{-1}$ from Theorem $\ref{thm: REA braided isomorphism}$, we get
\[
Z_{[N],[N]}^{\tp} = \Phi^{-1}(\phi_{\ast}(x_{NN}^{\tp}))Z_{[N-1],[N-1]}^{\tp}+\sum\limits_{i=1}^{N-1}\pm z_{Ni}^{\tp}Z_{[N-1],[N]\setminus\{i\}}^{\tp}.
\]
Using the relations in \ref{eq: partial boh rels}, along with invertibility of $Z_{[k],[k]}$, we can then simplify this, giving
\begin{gather}\label{eq: tp znn relation} 
\Phi^{-1}(\phi_{\ast}(x_{NN}^{\tp})) = Z_{[N],[N]}^{\tp}Z_{[N-1],[N-1]}^{-\tp}.
\end{gather}
For $q$ an odd root of unity, this then gives
\[
Z_{[N],[N]}^{p}Z_{[N-1],[N-1]}^{-p}=1,\]
which simplifies to the required relation. For $q$ an even root of unity, we further need to calculate $\Phi^{-1}(\phi_{\ast}(x_{NN}^{p}))$. Using Lemma \ref{lem: ast expansion 4}, along with relation \ref{eq: tp znn relation}, we get
\[
\Phi^{-1}(\phi_{\ast}(x_{NN}^{p})) = Z_{[N],[N]}^{\tp}Z_{[N-1],[N-1]}^{-\tp}Z_{[N],[N]}^{\tp}Z_{[N-1],[N-1]}^{-\tp}+\sum\limits_{i,j=1}^{N-1}q^{\tp(\tp-1)}c_{i,j}z_{Ni}^{\tp}z_{jN}^{\tp},\]
which then gives
\[
\Phi^{-1}(\phi_{\ast}(x_{NN}^{p})) = Z_{[N],[N]}^{p}Z_{[N-1],[N-1]}^{-p}=1.\]
\end{proof}
\begin{exmp}
As an example, for $q$ an odd root of unity with $p=3$, we have
\begin{gather*}
Z_{12,12}^3=(z_{22}z_{11}-q^{-2}z_{21}z_{12})^3 \\
= z_{22}^3z_{11}^3-(q^{-2}+1+q^2)z_{22}^2z_{21}z_{12}z_{11}^2+(1+q^{-2}+q^{-4})z_{22}z_{21}^2z_{12}^2z_{11}+(2q^6-q^4-q^2)z_{22}z_{21}z_{12}z_{11}^3\\
+(1-q^6)z_{21}^2z_{12}^2z_{11}^2-q^6(q^2-1)^2z_{21}z_{12}z_{11}^4-q^{-6}z_{21}^3z_{12}^3. 
\end{gather*}
Via the other $\boh$ relations, this simplifies to give the relation
\begin{gather*}
Z_{12,12}^3z_{11}^{-3}= z_{22}^3+(2-q^4-q^2)z_{22}z_{21}z_{12}-(q^2-1)^2z_{21}z_{12}z_{11}=1.
\end{gather*}
Then this can be seen to match the relation given in \cite{CookeLaugwitz}[Example 4.20].
\end{exmp}
From our presentation, we can also see (after mapping $q\mapsto q^{-1}$) that the algebra $\mathcal{L}_{0,1}(\bar{U}_q)$ in \cite{Faitg}[Theorem 4.4.1] is isomorphic to $\bohtwo$ with the added condition $Z_{12,12}=1$.

As an immediate consequence of our alternative presentation in Theorem \ref{thm: alternative presentation}, we get the following:
\begin{corr}\label{corr: boh simeq bgl}
For odd roots of unity, the small reflection equation algebra $\boh$ is isomorphic to $\bgl$. For even roots of unity, if we extend $\boh$ by the elements $Z_{[k],[k]}^{\frac{1}{2}}$, $1\leq k\leq N$, then $\boh[Z_{[1],[1]}^{\frac{1}{2}},...,Z_{[N],[N]}^{\frac{1}{2}}]$ is isomorphic to $\bglt$.
\end{corr}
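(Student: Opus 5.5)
The natural route is to transport the presentation of Theorem \ref{thm: alternative presentation} along the isomorphism of Theorem \ref{thm: oh simeq ot}, and then use the isomorphism $\botk\simeq\bglk^{\text{cop}}$. The injective map $\oh\rightarrow\ot$, $Z\mapsto T^lT^u$, sends $Z_{[k],[k]}\mapsto\prod_{i=1}^{k}t_{ii}^2$. Adjoining $\mu_k=Z_{[k],[k]}^{-\frac12}$ makes it an isomorphism $\oh[\mu_1,\dots,\mu_N]\simeq\ot$, with $\mu_k\mapsto\prod_{i\leq k}t_{ii}^{-1}$ for a consistent choice of square roots. So I would first show that this isomorphism carries the ideal generated by the relations of Theorem \ref{thm: alternative presentation} onto the ideal of $\ot$ defining $\bott$ (odd case) or $\mathfrak{o}_q^{(2)}(T(N))$ (even case). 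Then I would compose with $\botk\simeq\bglk^{\text{cop}}$ and note that $\bglk^{\text{cop}}\simeq\bglk$ as algebras.

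\textbf{Step 1: the toral relations.} Since $Z_{[k],[k]}\mapsto\prod_{i\le k}t_{ii}^2$, the relations $Z_{[k],[k]}^p=1$ for all $k$ are equivalent, by taking successive quotients, to $t_{kk}^{2p}=1$ for all $k$.

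\begin{itemize}
\item For $p$ odd, $t_{kk}$ already lies in the algebra generated by $t_{kk}^2$ once $t_{kk}^{2p}=1$. Indeed $t_{kk}=t_{kk}^{p+1}\cdot t_{kk}^{-p}$, and $t_{kk}^{2p}=1$ gives $t_{kk}^{p}=\pm1$ in each irreducible; more cleanly, I would mimic Remark \ref{remark: quasitriangularity of Uq(sl2)}. The point to make precise is that no square roots need adjoining and that the relation should really be $t_{kk}^p=1$. This requires care: I expect to argue that the image of $\oh$ modulo the relations equals the subalgebra generated by the $t_{ij}$ with $t_{kk}^p=1$, using $(t_{kk}^2)^{(p+1)/2}=t_{kk}$.
\item For $p$ even, adjoining $Z_{[k],[k]}^{\frac12}$ gives exactly the $t_{ii}$ with $t_{ii}^{2p}=1$, which is the $k=2$ relation of Definition \ref{def: botk}. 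This is the origin of the superscript $(2)$.
\end{itemize}

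\textbf{Step 2: the nilpotency relations.} I must show that, modulo the toral relations, the ideal generated by $z_{ij}^{\tp}$ for $i\neq j$ coincides with the ideal generated by $t_{ij}^{\tp}$ for $i\neq j$. I would argue by induction on $i$ (resp.\ $j$) using $z_{ij}=\sum_{k\le\min\{i,j\}}t_{ik}t_{kj}$.

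For $i<j$, we have $z_{1j}=t_{11}t_{1j}$. The two factors $q$-commute, so $z_{1j}^{\tp}$ is a power of $q$ times $t_{11}^{\tp}t_{1j}^{\tp}$, and since $t_{11}$ is invertible the two relations are equivalent.

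For general $i$, I would expand $z_{ij}^{\tp}$ using the $q$-binomial argument from the proof of Lemma \ref{lem: ast expansion 4}. The summands $t_{ik}t_{kj}$ should pairwise $q^2$-commute modulo lower terms, so that the vanishing of $\binom{\tp}{m}_{q^2}$ gives $z_{ij}^{\tp}=\sum_k c_k\, t_{ik}^{\tp}t_{kj}^{\tp}$. By induction all terms except $k=\min\{i,j\}$ lie in the ideal, and the remaining term is $t_{ii}^{\tp}t_{ij}^{\tp}$ or $t_{ij}^{\tp}t_{jj}^{\tp}$ up to a unit.

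\textbf{Main obstacle.} The hard step is justifying this $q$-commutation. The elements $t_{ik}t_{kj}$ do not literally $q^2$-commute, since there are correction terms in the $\ot$ commutation relations. A cleaner alternative I would try first is to compare dimensions. Theorem \ref{thm: alternative presentation} identifies $\boh$ with the twist of $\bogl$, so it has dimension $\dim\bogl$. The map above induces a surjection onto (a subalgebra of) $\bott$ or $\mathfrak{o}_q^{(2)}(T(N))$, and the toral count shows the dimensions agree. Concretely, $p^{N^2}$ against $p^{N}\tp^{N^2-N}$ with $\tp=p$ in the odd case, and for the even case $2^N\tp^{N^2}\cdot 2^N$ against $(2p)^N\tp^{N^2-N}$ after adjoining the square roots. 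This reduces Step 2 to showing that the $t_{ij}^{\tp}$ lie in the image ideal only modulo a well-defined surjection. That surjection follows from the easy direction: $z_{ij}^{\tp}$ lies in the ideal generated by the $t_{kl}^{\tp}$, via the same PBW-type reasoning applied to the quotient.
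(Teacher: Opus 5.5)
Your skeleton matches the paper's proof. That proof consists only of $\dim\boh=\dim\bogl$, the presentation of Theorem~\ref{thm: alternative presentation}, and transport along Theorem~\ref{thm: oh simeq ot}. The $(2)$ is explained there by $Z_{[k],[k]}^{1/2}$ having order $2p$, and the odd case by $Z_{[k],[k]}^{(p+1)/2}\mapsto\prod_{i\le k}K_i^{-1}$. Several parts of your plan are fine:
\begin{itemize}
\item your surjectivity argument, i.e.\ the triangular recursion from $z_{1j}=t_{11}t_{1j}$ and $z_{i1}=t_{i1}t_{11}$ together with $(t_{kk}^2)^{(p+1)/2}=t_{kk}$ in $\bott$;
\item your dimension bookkeeping;
\item mapping straight into $\bott$ in the odd case. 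Your first bullet's claim that $t_{kk}$ lies in the algebra generated by $t_{kk}^2$ once $t_{kk}^{2p}=1$ is false; it needs $t_{kk}^p=1$.
\end{itemize}

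The gap is that the dimension argument still requires the map to be well defined. You must show that each $z_{ij}^{\tp}$ lands in the ideal generated by the $t_{kl}^{\tp}$. This is not ``the easy direction'': it is exactly the step you flagged, and the mechanism you propose for it fails.

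Take $N=3$. Then $z_{23}\mapsto A+B$ with $A=t_{21}t_{13}$ and $B=t_{22}t_{23}$. The relations $t_{23}t_{21}=t_{21}t_{23}$, $t_{22}t_{21}=qt_{21}t_{22}$, $t_{23}t_{13}=q^{-1}t_{13}t_{23}$ and $t_{22}t_{13}=t_{13}t_{22}$ give $AB=BA$ on the nose. So $(A+B)^{\tp}$ expands with ordinary binomial coefficients, none of which vanish. Moreover $A^m$ is not a multiple of $t_{21}^mt_{13}^m$, because $t_{13}t_{21}-q^{-1}t_{21}t_{13}=q^{-1}(1-q^2)t_{22}t_{23}$. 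What actually happens is
\[
t_{21}^mt_{13}^m(A+B-q^{-2m}B)=q^{-m}t_{21}^{m+1}t_{13}^{m+1},
\]
hence
\[
t_{21}^{\tp}t_{13}^{\tp}=q^{\tp(\tp-1)/2}\prod_{s=0}^{\tp-1}(A+B-q^{-2s}B)=q^{\tp(\tp-1)/2}\bigl((A+B)^{\tp}-B^{\tp}\bigr).
\]
This is a cyclotomic cancellation, not a $q$-binomial one.

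A uniform way to close the gap avoids PBW reasoning. One checks that, pulled back along $\Phi^{-1}\circ\phi_\ast$, the map $Z\mapsto T^lT^u$ is $f\mapsto\pi^-(f_1)\pi^+(f_2)$. Here $\pi^+:\omn\to\otu$ and $\pi^-:\omn\to\otl$ are the triangular quotient maps, a homomorphism and an anti-homomorphism respectively. Multiplicativity for $\ast$ is where the cross relations of $\ot$, which encode $\br$, enter. Then $\Delta(x_{ij}^{\tp})=\sum_r x_{ir}^{\tp}\otimes x_{rj}^{\tp}$ and Lemma~\ref{lem: ast expansion 2} give
\[
z_{ij}^{\tp}\mapsto q^{-\tp(\tp-1)/2}\sum_{r\le\min\{i,j\}}t_{ir}^{\tp}t_{rj}^{\tp},
\]
which agrees with the $N=3$ computation above. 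This lies in the ideal, and your triangular induction then also gives the reverse inclusion. The paper's proof treats this correspondence of nilpotency relations as immediate, so this is precisely the content you would need to supply.
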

\begin{proof}
We first note that $\phi_{\ast}$ preserves dimensions, so 
\[
\dim \bogl = \dim \boh = p^N\tp^{N^2-N}.\]
Then the result follows immediately from the presentation in Theorem \ref{thm: alternative presentation}, combined with the isomorphism in Theorem \ref{thm: oh simeq ot}. We note that in the even root of unity case, we have $\bglt$ appearing instead of $\bgl$, as if $Z_{[k],[k]}^p=1$, then the square roots of the quantum minors will have order $2p$, and as the isomorphism maps 
\[
Z_{[k],[k]}^{\frac{1}{2}}\mapsto\prod\limits_{i=1}^{k}K_{i}^{-1},\] 
we require $K_i^{2p}=1$ to match this.

In the odd root of unity case, we can see that the addition of square roots of quantum minors is not necessary, as in this case we will have
\[
Z_{[k],[k]}^{\frac{p+1}{2}}\mapsto \prod\limits_{i=1}^{k}K_i^{-1}.\] 
\end{proof}

\subsection{Representations of the Small Reflection Equation Algebra}

Theorem \ref{thm: alternative presentation} allows for a straightforward classification of $\boh$ irreducibles. We note that the case of $N=2$ follows from \cite{BeraMukherjee}, and via the isomorphism with $\bgl$, the general case can also be related to \cite{Lusztig}[Proposition 5.11]. Before giving the classification, we want to use the general properties of $\oh$ quantum minors to describe how $\boh$ irreducibles can be viewed as highest weight representations. We will need the following relations in $\oh$:
\begin{prop}
Choose $I,J\subseteq[N]$, $\lvert I\rvert=\lvert J\rvert$, and $1\leq k\leq N$. We have the following identities in $\oh$:
\begin{gather}
	\quad Z_{[k],[k]}Z_{IJ} = q^{2\lvert I\cap [k]\rvert-2\lvert J\cap [k]\rvert}Z_{IJ}Z_{[k],[k]}, \label{eqn: Zkk q commute} \\
	Z_{[k],[k+1]\setminus\{k\}}Z_{[k+1]\setminus\{k\},[k]}-q^{-2}Z_{[k+1]\setminus\{k\},[k]}Z_{[k],[k+1]\setminus\{k\}} = (1-q^2)(Z_{[k-1],[k-1]}Z_{[k+1],[k+1]}-Z_{[k],[k]}^2) \label{eqn: zij relation} \\
	z_{1k}Z_{[k],[k+1]\setminus\{k\}}-q^{-1}Z_{[k],[k+1]\setminus\{k\}}z_{1k}=(q^2-1)z_{1,k+1}Z_{[k],[k]} \label{eqn: zij relation 2}\\
	z_{k1}z_{1,k+i}-qz_{1,k+i}z_{k1} = (q^2-1)z_{k,k+i}z_{11}, \quad i\geq 1. \label{eqn: zij relation 3} 
\end{gather}
\end{prop}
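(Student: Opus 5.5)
Rather than verify the four identities directly from the reflection equation, which would need long expansions, I would move each one into $\ot$ through the injective algebra map $\oh\rightarrow\ot$, $Z\mapsto T^lT^u$, built from the coaction $\beta$ of Proposition \ref{prop: oh coaction to ot}. Since that map is an injective homomorphism, an identity holds in $\oh$ as soon as it holds for the images in $\ot$.

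For the images of the minors I would use the quantum Cauchy--Binet formula. Applying the coproduct formula for quantum minors together with the braided-product description (Theorem \ref{thm: REA braided isomorphism}) gives
\[
Z_{IJ}\mapsto\sum_{K}T^l_{IK}T^u_{KJ},
\]
where $K$ runs over subsets with $\lvert K\rvert=\lvert I\rvert$, $K\le I$ componentwise and $K\le J$ componentwise. In particular $Z_{[k],[k]}\mapsto\prod_{i\le k}t_{ii}^2$, as already recorded in the paper.

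\textbf{Relation (\ref{eqn: Zkk q commute}).} I would get this by comparing degrees instead of computing.
\begin{itemize}
\item The torus generated by the $t_{ii}$ grades $\ot$: conjugation by $t_{ii}$ rescales $t_{ab}$ by $q^{\pm(\delta_{ia}-\delta_{ib})}$, read off from Theorem \ref{thm: ugl simeq ot}, where $K_i$ acts on $E_j,F_j$ by a weight.
\item Each summand $T^l_{IK}T^u_{KJ}$ of $Z_{IJ}$ has weight $\epsilon_I-\epsilon_J$; the intermediate $K$ cancels between the two factors.
\item Conjugating by $\prod_{i\le k}t_{ii}^2$ therefore multiplies every summand by $q^{2(\lvert I\cap[k]\rvert-\lvert J\cap[k]\rvert)}$.
\end{itemize}
Equivalently, the identity is the statement that $\beta$ restricted to $Z_{[k],[k]}$ is grouplike, combined with the comodule-algebra structure.

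\textbf{Relations (\ref{eqn: zij relation 2}) and (\ref{eqn: zij relation 3}).} Both involve only indices inside $\{1,k,k+1\}$ or $\{1,k,k+i\}$. Relation (\ref{eqn: zij relation 3}) is a single component of the reflection equation
\[
\hr_{12}Z_{23}\hr_{12}Z_{23}=Z_{23}\hr_{12}Z_{23}\hr_{12}.
\]
Taking the entry with indices $(k,1)\otimes(1,k+i)$ and using $\Sigma\circ\hr$ should produce it directly; the $\delta$-dependent off-diagonal terms of $\hr$ generate the $z_{k,k+i}z_{11}$ correction. Relation (\ref{eqn: zij relation 2}) is the analogous statement one rank up. I would derive it by expanding $Z_{[k],[k+1]\setminus\{k\}}$ along its first row, using the expansion formula in \cite{DCM1}[Proposition 2.9], and then applying (\ref{eqn: zij relation 3}) termwise. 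Alternatively, it is the image under $\Phi^{-1}\phi_\ast$ of an $\omn$ commutation relation between a generator and a minor, with the braiding factors computed by \cite{DCM1}[Lemma 2.7], as in Lemma \ref{lem: ast expansion 3}.

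\textbf{Relation (\ref{eqn: zij relation}).} This is the main obstacle: it is a quantum Desnanot--Jacobi (Dodgson condensation) identity for $\oh$. My plan is to start from the known Plücker/Desnanot identity in $\omn$,
\[
X_{[k],[k]}X_{[k+1]\setminus\{k\},[k+1]\setminus\{k\}}-X_{[k],[k+1]\setminus\{k\}}X_{[k+1]\setminus\{k\},[k]}\propto X_{[k-1],[k-1]}X_{[k+1],[k+1]},
\]
\cite{ParshallWang}. I would then convert the products to $\ast$-products using equation (\ref{eq:original product}), evaluating the $\br,\br^{-1}$ terms via \cite{DCM1}[Lemma 2.7], which keeps only a few index choices. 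This yields one identity. Its $q$-commutator form then comes from combining it with its mirror image, where the order of the mixed minors is swapped, and using (\ref{eqn: Zkk q commute}) to move the $Z_{[k],[k]}$ factors past each other.

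If the bookkeeping of the braiding scalars proves too heavy, I would fall back to checking the identity after the map to $\ot$. There, by the Cauchy--Binet formula above, the mixed minors $Z_{[k],[k+1]\setminus\{k\}}$ and $Z_{[k+1]\setminus\{k\},[k]}$ each have only two terms. The relation then reduces to rank-$2$ computations in $\ot$ governed by the $\usltwo$ commutator $[E_k,F_k]$ of Theorem \ref{thm: ugl simeq ot}.
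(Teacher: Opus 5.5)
Your route differs from the paper's, and with two corrections it works. The paper's proof is entirely by citation:
\begin{itemize}
\item (\ref{eqn: Zkk q commute}) is \cite{DCM1}, Eq.~2.21;
\item (\ref{eqn: zij relation}) is \cite{Me1}, Lemma~4.4;
\item (\ref{eqn: zij relation 2}) is a specialization of the general REA minor commutation formula \cite{DCM1}, Eq.~2.20;
\item only (\ref{eqn: zij relation 3}) is read off from the defining relations, as you also propose.
\end{itemize}
Pushing everything into $\ot$ through the injective map $Z\mapsto T^lT^u$ turns the proposition into elementary checks.

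For (\ref{eqn: Zkk q commute}) you do not need Cauchy--Binet at all. The map preserves the grading in which $z_{ij}$ and $t_{ij}$ have weight $\epsilon_i-\epsilon_j$, and $Z_{IJ}$ is homogeneous of weight $\epsilon_I-\epsilon_J$. Moreover $t_{aa}t_{bc}t_{aa}^{-1}=q^{\delta_{ab}-\delta_{ac}}t_{bc}$, which fixes the sign you left open.

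For (\ref{eqn: zij relation}) and (\ref{eqn: zij relation 2}), your fallback should be the main argument, and it is simpler than you say. The componentwise constraint leaves only $K=[k]$, so $Z_{[k],[k+1]\setminus\{k\}}\mapsto\prod_{i<k}t_{ii}^2\,t_{kk}t_{k,k+1}$ and $Z_{[k+1]\setminus\{k\},[k]}\mapsto\prod_{i<k}t_{ii}^2\,t_{k+1,k}t_{kk}$ are single monomials, not two-term sums. Up to torus weights:
\begin{itemize}
\item (\ref{eqn: zij relation}) reduces to $[t_{k,k+1},t_{k+1,k}]=(q^2-1)(t_{kk}^2-t_{k+1,k+1}^2)$;
\item (\ref{eqn: zij relation 2}) reduces to $t_{1k}t_{k,k+1}-t_{k,k+1}t_{1k}=(q-q^{-1})t_{1,k+1}t_{kk}$ in $\otu$.
\end{itemize}
Both go through.

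The paper's citations give commutation formulas for arbitrary $I,J$; your approach gives a transparent verification. The price is that you must establish the image formula $Z_{IJ}\mapsto\sum_K T^l_{IK}T^u_{KJ}$ with its exact normalization, because the mixed minor occurs on only one side of (\ref{eqn: zij relation 2}). That formula comes from the covariance of REA minors under $\beta$ (as in \cite{DCM1}). You also need that the character $z_{ij}\mapsto\delta_{ij}$ is the counit of $(\omn,\ast)$, hence sends $Z_{KL}$ to $\delta_{KL}$.

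\textbf{First correction.} The first-row-expansion plan for (\ref{eqn: zij relation 2}) fails as described. The expansion produces the entries $z_{1j}$ and $(k-1)$-minors on rows $\{2,\dots,k\}$. Relation (\ref{eqn: zij relation 3}) only relates $z_{k1}$ to $z_{1,k+i}$, so it gives no control over moving $z_{1k}$ past those terms. Use the $\ot$ check instead.

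\textbf{Second correction.} Relations (\ref{eqn: zij relation 2}) and (\ref{eqn: zij relation 3}) are false for $k=1$, so neither your argument nor the paper's can cover that case. At $k=1$ they become $q$-commutation rules between $z_{11}$ and $z_{1,1+i}$. For example, (\ref{eqn: zij relation 3}) would give $z_{11}z_{12}=(q^2+q-1)z_{12}z_{11}$ and (\ref{eqn: zij relation 2}) would give $z_{11}z_{12}=(q^2+q^{-1}-1)z_{12}z_{11}$, whereas (\ref{eqn: Zkk q commute}) gives $z_{11}z_{12}=q^2z_{12}z_{11}$.

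Concretely, the coefficient of $e_{1,k+i}\otimes e_{k1}$ in the reflection equation reads $z_{1,k+i}z_{k1}=q^{-1}z_{k1}z_{1,k+i}+(q^{-1}-q)z_{k,k+i}z_{11}$, which is (\ref{eqn: zij relation 3}), only when $k\neq1$. At $k=1$ the diagonal entry $q^{-1}$ of $\hr$ enters, and one just recovers $z_{11}z_{1j}=q^2z_{1j}z_{11}$. Likewise, your $\ot$ computation for (\ref{eqn: zij relation 2}) closes only when $t_{1k}$ is off-diagonal, i.e.\ $k\geq2$.

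So state and prove (\ref{eqn: zij relation 2}) and (\ref{eqn: zij relation 3}) for $k\geq 2$, with $k+1\le N$ and $k+i\le N$ respectively.
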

\begin{proof}
The first equation is given in \cite{DCM1}[Equation 2.21]. The second equation follows from \cite{Me1}[Lemma 4.4]. For the third relation, we use \cite{DCM1}[Equation 2.20] with $I=[k]$, $J=[k+1]\setminus\{k\}$, $I'=k$, $J'=1$. The last equation comes directly from the $\oh$ relations. 
\end{proof}
Let $V$ be an $\boh$ irreducible. Then Equation \ref{eqn: Zkk q commute} says that the $Z_{[k],[k]}$ $1\leq k\leq N$ can be simultaneously diagonalized on $V$, and that every other $z_{ij}$, $i\neq j$, or $Z_{IJ}$, $I\neq J$, will act as a lowering or raising operator on $V$. Hence $V$ can be viewed as a highest weight representation. By combining Equations \ref{eqn: zij relation}, \ref{eqn: zij relation 2}, and \ref{eqn: zij relation 3}, we see that the $\boh$ action on $V$ is fully determined by the possible highest weights. From our alternative presentation of $\boh$ in Theorem \ref{thm: alternative presentation}, we see that if $Z_{[k],[k]}$ acts on a highest weight vector $v_0$ of $V$ by
\[
Z_{[k],[k]}v_0 = \lambda_kv_0, \quad \text{ for  } \lambda_k\in\mc, \quad 1\leq k\leq N, \]
then $\lambda_k$ must satisfy
\[
\lambda_k^p=1.\] 
It follows from this that the possible highest weights will be in bijection with $\mz_p^N$. Hence all that remains is to verify that an irreducible $\boh$ representation actually exists for each element of $\mz_p^N$. We can verify this and state the classification of irreducibles as follows:
\begin{thm}\label{thm: boh rep classification}
The irreducible representations of $\boh$ are in bijection with the set $\mz_p^N$.
\end{thm}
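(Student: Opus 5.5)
The discussion preceding the statement already gives injectivity: every irreducible $V$ is a highest weight module. Its highest weight $(\lambda_1,\dots,\lambda_N)$, with $Z_{[k],[k]}v_0=\lambda_k v_0$, satisfies $\lambda_k^p=1$, so it lies in $\mz_p^N$. By Equations \ref{eqn: zij relation}--\ref{eqn: zij relation 3} this weight determines the whole action. What remains is existence: for each $(\lambda_1,\dots,\lambda_N)$ with $\lambda_k^p=1$ we must produce an irreducible $\boh$-module whose highest weight is exactly this tuple. I would not build the module by hand. Instead I would transport the problem through the isomorphisms already established.

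For $q$ an odd root of unity, Corollary \ref{corr: boh simeq bgl} gives $\boh\simeq\bgl$. Under this isomorphism $Z_{[k],[k]}$ goes to $\prod_{i=1}^kK_i^{-2}$, via Theorem \ref{thm: oh simeq ot} and the map $Z_{[k],[k]}\mapsto\prod_{i\le k}t_{ii}^2$ together with $t_{ii}\mapsto K_i^{-1}$. Irreducibles of the small quantum group $\bgl$ are classified by their $K$-weights (cf. \cite{Lusztig}[Proposition 5.11]). These are tuples $(\kappa_1,\dots,\kappa_N)$ with $\kappa_i^p=1$, and each tuple gives exactly one irreducible, the simple quotient of the baby Verma module. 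Then $\lambda_k=\prod_{i\le k}\kappa_i^{-2}$. Since $p$ is odd, squaring is a bijection on $\mz_p$, and the triangular change of variables $\kappa\mapsto\lambda$ is a bijection of $\mz_p^N$. This gives existence and recovers the count.

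For $q$ an even root of unity I would use the extension $\boh[Z_{[k],[k]}^{1/2}]\simeq\bglt$. An irreducible of $\bglt$ has $K$-weights $\kappa_i$ with $\kappa_i^{2p}=1$. Restricting to $\boh$ gives a module with highest weight $\lambda_k=\prod_{i\le k}\kappa_i^{-2}$, and $\lambda_k^p=1$. Every such $\lambda$ is attained, since one can choose square roots $\mu_k$ with $\mu_k^{2p}=1$ and set $\kappa$ by successive ratios. The key step is to show that this restriction stays irreducible as a $\boh$-module. Here is the argument. The adjoined elements $Z_{[k],[k]}^{1/2}$ act diagonally on the weight spaces of $V$, because they $q$-commute with everything just as $Z_{[k],[k]}$ does in Equation \ref{eqn: Zkk q commute}. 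So any $\boh$-submodule generated by a weight vector is automatically stable under them. It is therefore enough to know that every nonzero $\boh$-submodule contains a weight vector, which holds because the $Z_{[k],[k]}$ are simultaneously diagonalizable.

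The main obstacle is precisely this compatibility of weights and submodules. Weight spaces for $Z_{[k],[k]}$ are coarser than weight spaces for the square roots, so a $\boh$-weight space could a priori split under $Z^{1/2}$. I would handle this by noting that on the highest weight vector the ratios between the two sets of weights are fixed; then the raising and lowering operators $z_{ij}$ shift the square-root weights by the $q$-powers predicted by Equation \ref{eqn: Zkk q commute}, so $\boh$-weights determine square-root weights inside a single highest weight module. A fallback is a direct construction, using the $N=2$ construction of \cite{BeraMukherjee} inductively through the embedding $\mathcal{O}_q(H(N-1))\subset\oh$.
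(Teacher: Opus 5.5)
\textbf{Odd roots of unity.} Your argument is correct, and it takes a different route from the paper. The paper realizes each $\lambda\in\mz_p^N$ directly. It writes $\lambda_k=q^{2\sum_{i\le k}r_i}$, shifts $r_1,\dots,r_{N-1}$ by multiples of $p$ to obtain a dominant $r'$, and takes the irreducible quotient of the root-of-unity specialization of the big cell representation of highest weight $r'$ from \cite{DCM1}. That construction is uniform in the parity of $q$ and does not use Corollary \ref{corr: boh simeq bgl}. Your transport of Lusztig's classification through $\boh\simeq\bgl$ avoids all specialization issues. Indeed the count $p^N$ alone already suffices.

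\textbf{Even roots of unity: the gap.} The gap is the step you flag yourself: that a simple $\bglt$-module $V$ remains irreducible on restriction to $\boh$. Your justification, that $\boh$-weights determine the square-root weights inside a single highest weight module, is false for $N\ge 3$. By Equation \ref{eqn: Zkk q commute}, a monomial in the $z_{ij}$ with net count $m_k$ multiplies the $Z_{[k],[k]}^{1/2}$-eigenvalue by $q^{m_k}$ and the $Z_{[k],[k]}$-eigenvalue by $q^{2m_k}$. So the $\boh$-weight records $m_k$ only modulo $\tilde p$, while the square-root weight records it modulo $p=2\tilde p$. For $N=2$ one has $0\le m_1\le \tilde p-1$, and your mechanism works. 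For $N=3$, however, the $\alpha_1$-depth of the weights of a simple module can reach $2\tilde p-2$. Take the Steinberg-type simple module, which coincides with its baby Verma module. There $F_2v_0$ and $F_1^{\tilde p-1}F_{\alpha_1+\alpha_2}v_0$ are both nonzero and have the same $Z_{[k],[k]}$-eigenvalues for $k=1,2,3$. But their $Z_{[1],[1]}^{1/2}$-eigenvalues differ by $q^{\tilde p}=-1$. Consequently, simultaneous diagonalizability of the $Z_{[k],[k]}$ on a $\boh$-submodule $W$ only gives you vectors $w=w_++w_-$ that mix $\bglt$-weights. Nothing in your argument shows that $\boh$ can separate these components. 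The suggested fallback via \cite{BeraMukherjee} and $\mathcal{O}_q(H(N-1))\subset\oh$ is not yet an argument.

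\textbf{A repair by counting.} You can avoid irreducibility of the restriction altogether. The elements $Z_{[k],[k]}^{1/2}$ commute with each other, normalize $\boh$ (they rescale each $z_{ij}$ by a power of $q$), and square into $\boh$. Hence
\[
\boh[Z^{1/2}]=\sum_{\epsilon\in\{0,1\}^N}\prod_k\bigl(Z^{1/2}_{[k],[k]}\bigr)^{\epsilon_k}\,\boh .
\]
So for a simple $\boh$-module $U$, the induced module $\boh[Z^{1/2}]\otimes_{\boh}U$ has dimension at most $2^N\dim U$. Each of its simple quotients contains an isomorphic image of $1\otimes U$, because $1\otimes U$ generates the induced module. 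It therefore has at most $2^N$ simple quotients up to isomorphism. By Frobenius reciprocity, every simple $\bglt$-module is a quotient of the module induced from any simple $\boh$-submodule of it. Thus $(2p)^N=\#\mathrm{Irr}(\bglt)\le 2^N\,\#\mathrm{Irr}(\boh)$, so $\boh$ has at least $p^N$ irreducibles. The injectivity of the highest-weight map into $\mz_p^N$, taken from the preceding discussion, then upgrades this to the claimed bijection.
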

\begin{proof}
Choose $\lambda=(\lambda_1,...,\lambda_N)\in\mc^N$ such that $\lambda_k^p=1$ for $1\leq k\leq N$. To verify the existence of an $\boh$ irreducible $V_{\lambda}$ corresponding to $\lambda$, we note that $V_{\lambda}$ can be considered as the irreducible quotient of the specialization of a \textit{big cell representation} from \cite{DCM1}[Theorem 4.15] to $q$ a root of unity, where we only need to consider the $\epsilon=(1,...,1)$ case. Given such a choice of $\lambda$, we can always choose $r\in\mr^N$ such that we have
\[
\lambda_k=q^{2\sum\limits_{i=1}^{k}r_i}.\]
To relate this to the ordered highest weights in \cite{DCM1}[Theorem 4.15], we see that we can always choose $k_i\in\mn$ such that 
\[
r':=(r_1',...,r_N')=(r_1+k_1p,...,r_{N-1}+k_{N-1}p,r_N)\]
satisfies 
\[
r_1'\geq r_2'\geq...\geq r_N'.\]
Then $q^{r_i'}=q^{r_i}$ for each $i$, and $Z_{[k],[k]}$ acts on a highest weight vector $v_0$ as
\[
Z_{[k],[k]}v_0 = q^{2\sum\limits_{i=1}^{k}r_i'}v_0 = q^{2\sum\limits_{i=1}^{k}r_i}v_0 = \lambda_k v_0.\] 
Hence we see that the $\boh$ irreducible corresponding to $r$ is a quotient of the specialization (to $q$ a root of unity) of the $\ot$ big cell representation of highest weight $r'$.
\end{proof}

\section{Wee Reflection Equation Algebras}\label{section: 4}

Recall we have the coaction
\[
\beta:\oh\rightarrow\oh\otimes\ugl^{\text{cop}}.\]
This coaction then descends to the respective quotients, giving a coaction
\[
\beta:\boh\rightarrow\boh\otimes \bgl^{\text{cop}},\]
where we omit which version of $\bglk$ we need for brevity. In \cite{DCM2}, a notion of \textit{shape} invariant of $\oh$ representations was introduced. Given an $\oh$ irreducible $W$, the shape $\mcs$ associated to $W$ can be thought of as describing an ideal $\mci_{\mcs}$ (see \cite{DCM2}[Definition 3.5]) such that $\mci_{\mcs}$ is contained in the kernel of $\oh\rightarrow\text{End}(W)$. This shape invariant has some interesting properties: Firstly, the shape is preserved by the $\ugl$ coaction \cite{Me1}[Proposition 6.4], i.e. the homomorphism $\beta$ maps
\[
\beta:\oh/\mci_{\mcs}\rightarrow\left(\oh/\mci_{\mcs}\right)\otimes\ugl.\]
This means that if $V$ is a $\ugl$ representation, then $W\otimes V$ will be an $\oh$ representation with the same shape as $W$. Secondly, for a given shape $\mcs$ there are elements $Z_{\mcs,k}\in\oh$ such that they $q$-commute with all elements in $\oh/\mci_{\mcs}$. In terms of the coaction, they satisfy
\[
\beta:Z_{\mcs,k}\mapsto Z_{\mcs,k}\otimes\left(\prod K_{i}^{-1}\right),\]
where the product is over a certain labelling associated to the shape. 

The simplest example of shape is the identity shape $\mcs=id$. Any representation with this shape, can be thought of as a $\ugl$ representation coming from the embedding $\oh\rightarrow\ugl$ in Theorem \ref{thm: oh simeq ot}. In the case of the identity shape, we simply have $Z_{\mcs,k}=Z_{[k],[k]}$. We can then think of the algebra $\boh$ as a quotient of $\oh$ that is \textit{compatible} with the identity shape ideal $\mci_{\mcs=\id}$. Motivated by this, it is then natural to ask if we can introduce alternative quotients of $\oh$ that are compatible with other shapes. We define this in a general way as follows:
\begin{defin}
Let $A$ be a finite dimensional quotient of $\oh$. We call $A$ a \textbf{wee reflection equation algebra} if $A$ forms a comodule algebra over $\bgl$, (or alternatively $\bsl$ or $\bglk$) with respect to the coaction $\beta$.
\end{defin}
In particular, if $\mci_H$ is an ideal of $\oh$, with $A=\oh/\mci_H$, and $\mci_U$ is the ideal of $\ugl$ such that $\ugl/\mci_U = \bgl$, then for $A$ to be a wee REA, we require
\[
\beta(\mci_H)\subseteq \mci_H\otimes \ugl + \oh\otimes \mci_U.\]
An immediate consequence of the definition, is that given a wee REA $A$, then $\Rep A$ will form a module category over $\Rep \bgl$. Considering our motivation of the shape invariant, we can further specify when a wee REA is associated to a particular shape:
\begin{defin}
Given a wee REA $A$ with $A=\oh/\mci_{H}$, we say it is associated to a shape $\mcs$ if $\mci_{\mcs}\subseteq\mci_H$, and $Z_{\mcs,k}\notin\mci_H$ for all $k$.
\end{defin}

In the motivating case of a wee REA associated to a shape, it is straightforward to see that we expect relations of the form $Z_{\mcs,k}^{p}-1\in\mci_H$. Presumably, we also require other quotient relations, similar to $z_{ij}^{\tp}=0$, to ensure the resulting algebra is finite dimensional. Unfortunately, in the case of general shapes, it appears (based on \cite{Me1}[Lemma 3.8]) to be quite difficult to determine what quotient relations are actually necessary, and so we leave this to future work. For now, we will instead focus on two families of examples of wee REAs. These are wee REAs of \textit{big cell type,} and wee REAs coming from \textit{exact comodule algebras} via the classification in \cite{Mombelli, NSS}.

\subsection{Wee REAs of big cell type}
In \cite{DCM1}, a family of $\oh$ representations known as \textit{big cell representations} were considered. These can be thought of as coming from a family of algebras $\ote$, which are modifications of $\ot$ based on some choice of $\epsilon\in\{1,-1,0\}^{N}$. In fact, the isomorphism in Theorem \ref{thm: oh simeq ot} generalizes to give a homomorphism
\[
\oh\rightarrow\ote,\]
and a big cell representation factors through this homomorphism. Our aim now is to similarly generalize $\boh$ based on a notion of $\bote$. By how $\epsilon$ is actually used in the definition of $\mathcal{O}_{q,\epsilon}(T(N))$ (see \cite{DCM1}[Section 4.2]), we only need to consider $\epsilon$ of the form
\[
\epsilon=(\epsilon_1,...,\epsilon_K,0,...,0), \quad \epsilon_1,...,\epsilon_K\in\{1,-1\}.\]
Hence to simplify, we will view any chosen $\epsilon$ as consisting of $K$ non-zero terms followed by $N-K$ zeroes, for some $1\leq K\leq N$. Given $\epsilon=(\epsilon_1,...,\epsilon_N)$, we denote 
\[
\epsilon_{(i,j]}:=\prod\limits_{s=i+1}^{j}\epsilon_s.\]
\begin{defin}
Given a choice of $\epsilon$, $\ote$ is the algebra generated by the sub-algebras $\otl$ and $\otu$, along with the added relation
\begin{equation}
	\begin{split}
		t_{ij}t_{kl}-q^{\delta_{jk}-\delta_{il}}t_{kl}t_{ij} =& \delta_{il}q^{\delta_{jk}-1}(1-q^2)\left(\sum\limits_{m=i+1}^{\min\{j,k\}}\epsilon_{(i,m]}t_{km}t_{mj}\right) 
		+ \delta_{jk}(q^{2}-1)\left(\sum\limits_{m=\max\{i,l\}}^{j-1}t_{im}t_{ml}\right),
	\end{split} 
\end{equation}
for $i<j$ and $k>l$.
\end{defin}
Taking $\epsilon=(1,...,1)$ then recovers the algebra $\ot$. We note that for other choices of $\epsilon$, $\ote$ is no long a Hopf algebra, but instead forms a comodule algebra over $\ot$.
The above relations generalizes, for $i<j$, $k>l$, and $n\in\mn$, to the following general commutation relation:
\begin{equation}
	\begin{split}
		t_{ij}^nt_{kl}-q^{n(\delta_{jk}-\delta_{il})}t_{kl}t_{ij}^n =& \delta_{il}q^{\delta_{jk}-1}(q^{2-2n}-q^2)t_{ij}^{n-1}\left(\sum\limits_{m=i+1}^{\min\{j,k\}}\epsilon_{(i,m]}t_{km}t_{mj}\right) \\
		&+ \delta_{jk}(q^{2n}-1)t_{ij}^{n-1}\left(\sum\limits_{m=\max\{i,l\}}^{j-1}t_{im}t_{ml}\right) \\
		&+ \delta_{il}\delta_{jk}(q^{2n}-q^2-1+q^{2-2n})t_{ij}^{n-2}\left(\sum\limits_{i+1=s\leq m}^{j-1}\epsilon_{(i,s]}t_{im}t_{ms}t_{sj}\right),
	\end{split} 
\end{equation}
where we can obtain a similar relation for $i>j$ and $k<l$ by applying the $*$-structure. We see from this relation that for $q$ a root of unity, and $n=\tp$, that in $\ote$ each of the $t_{ij}$ will again either commute or anti-commute. Hence we can define the following:
\begin{defin}
Given $k\in\mn$ and a choice of $\epsilon$, we define $\botek$ to be the finite dimensional quotient of $\ote$ by the relations
\[
t_{ii}^{kp}=1 \qquad t_{ij}^{\tp}=0, ~ i\neq j.\] 
\end{defin}
Given a choice of $\epsilon$, write $E_\epsilon$ for the $N\times N$ diagonal matrix with $\epsilon_{(0,i]}$ as the $i$th diagonal entry. Then there is a homomorphism
\begin{gather}\label{eqn: epsilon hom}
\oh\rightarrow\ote, \qquad Z\mapsto T^lE_{\epsilon}T^u.
\end{gather}
We can combine this homomorphism with the definition of $\bote$ to get a homomorphism 
\[
\oh\rightarrow\botek,\]
where $k=1$ for odd roots of unity, and $k=2$ for even roots of unity. Given this, and using \cite{DCM1}[Lemma 4.7], we then have the following:
\begin{prop}
For each choice of $1\leq K\leq N$, and $\epsilon=(\epsilon_1,...,\epsilon_K,0,,,0)$, with $\epsilon_i\in\{1,-1\}$, there is a wee reflection equation algebra given by the homomorphism $\oh\rightarrow\botek$. It is defined by the relations
\[
Z_{[k],[k]}^{p}=\prod\limits_{i=1}^{k}\epsilon^p_{(0,i]}, \quad k\leq K, \qquad \]
\[
z_{ij}^{\tp}=0, \quad i\neq j,\]
\[
Z_{IJ}=0 ~ \text{ if } ~ \lvert I\rvert >K.\]
\end{prop}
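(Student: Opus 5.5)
Our plan is to work with the image of $\oh$ under the homomorphism $Z\mapsto T^lE_{\epsilon}T^u$ of Equation \ref{eqn: epsilon hom}, composed with the quotient $\ote\rightarrow\botek$. We define $A$ to be this image, so that $A=\oh/\mci_H$ with $\mci_H$ the kernel. Since $\botek$ is finite dimensional, $A$ is finite dimensional. The proof then splits into three steps: verifying the listed relations hold in $A$, showing $A$ is a comodule algebra over $\bglk$ with respect to $\beta$, and showing that these relations generate $\mci_H$.

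\emph{Step 1: the relations hold.}
\begin{itemize}
\item For the leading minors, we would use the $\ote$ analogue of the computation $Z_{[k],[k]}\mapsto\prod_{i\le k}t_{ii}^2$ from Proposition \ref{prop: oh coaction to ot}, which is \cite{DCM1}[Lemma 4.7]. Because $E_\epsilon$ is diagonal and $T^l,T^u$ are triangular, this gives $Z_{[k],[k]}\mapsto\prod_{i=1}^k\epsilon_{(0,i]}t_{ii}^2$. Raising to the $p$th power and using $t_{ii}^{kp}=1$ gives $\prod_{i\le k}\epsilon_{(0,i]}^p$. At even roots the exponent needs care: there $t_{ii}^{2p}=1$ with $k=2$, and $t_{ii}^{2p}$ is exactly what appears.
\item For $k>K$, the factor $\epsilon_{(0,k]}=0$ forces $Z_{[k],[k]}\mapsto 0$. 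More generally, any $Z_{IJ}$ with $\lvert I\rvert>K$ maps to a sum of products of minors of $T^l$, $E_\epsilon$ and $T^u$ (quantum Cauchy--Binet), and each such term contains a minor of $E_\epsilon$ of rank greater than $K$, which vanishes.
\item For $z_{ij}^{\tp}$ with $i\neq j$, we would repeat the binomial argument of Lemma \ref{lem: ast expansion 4}. The image $\sum_m\epsilon_{(0,m]}t_{im}t_{mj}$ is a sum of terms that pairwise $q^2$-commute up to lower terms. We would use the general commutation relation in $\ote$ at $n=\tp$ (the $t_{ij}^{\tp}$ commute or anticommute) together with the vanishing of $\binom{\tp}{m}_{q^2}$ to reduce $z_{ij}^{\tp}$ to a combination of $t_{im}^{\tp}t_{mj}^{\tp}$. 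At least one factor is off-diagonal, so each term is $0$.
\end{itemize}

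\emph{Step 2: the comodule algebra structure.} We use that $\ote$ is a comodule algebra over $\ot$ through $T\mapsto T_{13}T_{12}$-type formulas compatible with $\beta$. So it suffices to check that the defining ideal of $\botek$ is mapped into $\ote\otimes I+J\otimes\ot$. This follows from the coproduct formula $\Delta(t_{ij}^{\tp})=\sum_r t_{ir}^{\tp}\otimes t_{rj}^{\tp}$, proved in the $\epsilon$-twisted setting exactly as in the $\ogl$ case, together with the fact that the $t_{ii}$ are grouplike up to the nilpotent parts.

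\emph{Step 3: the relations generate the kernel.} This is the main obstacle. We would argue by a dimension count, as in Corollary \ref{corr: boh simeq bgl}. First, the quotient of $\oh$ by the listed relations is spanned by ordered monomials in the generators that survive. The only surviving Cartan-type directions are $Z_{[k],[k]}$ for $k\le K$, since the larger minors are killed. We would then bound this dimension above by $p^K\tp^{\,m}$, where $m$ counts the surviving off-diagonal generators. Finally, we would compare this with the dimension of the image in $\botek$, computed from its PBW basis.

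If equality proves hard to establish directly, the fallback is to define the wee REA as the image $A$ itself. In that case it suffices to show that the listed relations hold in $A$ and define a finite-dimensional comodule quotient, which is Steps 1 and 2.
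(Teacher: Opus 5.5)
Your route is the paper's. Its proof only composes $\oh\to\ote\to\botek$ and reads the relations off \cite{DCM1}[Lemma 4.7], and your first two bullets of Step 1 and your Step 2 expand that along the expected lines. The genuine gap is the third bullet of Step 1, the one place where you supply your own argument, because its mechanism fails.

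Take $N=3$, $K\geq 2$ and $(i,j)=(2,3)$, so that $z_{23}\mapsto\epsilon_1t_{21}t_{13}+\epsilon_1\epsilon_2t_{22}t_{23}$. Use $t_{13}t_{22}=t_{22}t_{13}$, $t_{13}t_{23}=qt_{23}t_{13}$, $t_{22}t_{21}=qt_{21}t_{22}$, and $t_{23}t_{21}=t_{21}t_{23}$ (the mixed relation with $\delta_{il}=\delta_{jk}=0$). With these, the two summands commute exactly. So there is no $q^2$-binomial vanishing, and ordinary binomial coefficients appear.

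Worse, the two factors inside a single summand do not $q$-commute. The mixed relation gives $t_{13}t_{21}-q^{-1}t_{21}t_{13}=q^{-1}(1-q^2)\epsilon_2t_{22}t_{23}$. In general, the correction terms for $t_{mj}t_{im}$ are exactly the other summands $\epsilon_{(m,s]}t_{is}t_{sj}$, $s>m$, of the image of $z_{ij}$.

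Write $(t_{21}t_{13})^{\tp}=\sum_{\ell=1}^{\tp}\gamma_\ell(t_{22}t_{23})^{\tp-\ell}t_{21}^{\ell}t_{13}^{\ell}$. One finds $\gamma_1=\bigl((q^{-2}-1)\epsilon_2\bigr)^{\tp-1}\neq 0$. The $\ell=1$ term is a nonzero multiple of a PBW monomial of $\botek$ that occurs in no other term. So $(t_{21}t_{13})^{\tp}$ is neither a multiple of $t_{21}^{\tp}t_{13}^{\tp}$ nor zero in $\botek$.

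Any vanishing of the image of $z_{ij}^{\tp}$ must therefore come from cancellation between different summands, which your argument does not control. You would need a genuine Frobenius-type identity expressing the image of $z_{ij}^{\tp}$ through the $t_{im}^{\tp}$ and $t_{mj}^{\tp}$, proved by a method that handles these corrections. The Parshall--Wang argument behind $\Delta(x_{ij}^{\tp})=\sum_r x_{ir}^{\tp}\otimes x_{rj}^{\tp}$, which you use in Step 2, works only because there the summands live in a tensor product and genuinely $q$-commute.

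Step 3 is, as you acknowledge, not carried out, and the proposed dimension comparison is not set up correctly. The image of $\oh$ is a proper subalgebra of $\botek$; for $K<N$ it involves only the $t_{im}$, $t_{mj}$ with $m\leq K$. So its dimension cannot be read off the PBW basis of $\botek$, and you have no spanning set for $\oh$ modulo the listed relations. The paper does not prove that these relations generate the kernel either; it simply asserts the presentation. Your fallback therefore matches what the paper actually provides. Note, however, that even the fallback's claim that the listed relations hold in the image depends on repairing the third bullet.
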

We call these quotient algebras the wee reflection equation algebras of \textit{big cell type.} By construction, they will be associated to the corresponding big cell shape. Using the same techniques as Theorem \ref{thm: boh rep classification}, we can immediately describe their irreducibles:
\begin{prop}
For $1\leq K\leq N$, and a choice of $\epsilon=(\epsilon_1,...,\epsilon_K,0,...,0)$ with $\epsilon_i\in\{1,-1\}$, the corresponding wee reflection equation algebra of big cell type has irreducibles in bijection with $\mz_p^K$.
\end{prop}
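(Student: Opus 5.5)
We mirror the argument of Theorem \ref{thm: boh rep classification}, now inside the wee REA $A_\epsilon$ given by the image of $\oh\rightarrow\botek$. The first step is to show that every irreducible $A_\epsilon$-module $V$ is a highest weight module whose weight is recorded by the eigenvalues of the leading minors $Z_{[k],[k]}$ for $1\leq k\leq K$ only. Equation \ref{eqn: Zkk q commute} holds in $\oh$, hence in the quotient. So the $Z_{[k],[k]}$ with $k\leq K$ pairwise commute, and every $z_{ij}$ and $Z_{IJ}$ with $I\neq J$ shifts their joint eigenspaces by powers of $q^{2}$. The leading minors with $k>K$ vanish by the relation $Z_{IJ}=0$ for $\lvert I\rvert>K$. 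For $k\leq K$, the relation $Z_{[k],[k]}^p=\prod_{i\leq k}\epsilon_{(0,i]}^p$ shows these elements are invertible and semisimple in $A_\epsilon$. Since $V$ is finite dimensional and the off-diagonal generators are nilpotent ($z_{ij}^{\tp}=0$), we can pick a joint eigenvector $v_0$ annihilated by the raising operators. Irreducibility then gives $V=A_\epsilon v_0$.

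The second step is to show that $V$ is determined by $(\lambda_1,\dots,\lambda_K)$, where $Z_{[k],[k]}v_0=\lambda_kv_0$. We use Equations \ref{eqn: zij relation}, \ref{eqn: zij relation 2}, \ref{eqn: zij relation 3} exactly as in the identity-shape case to express the action on $v_0$. The only change is that $Z_{[k],[k]}$ and $Z_{[k+1],[k+1]}$ act by $0$ once $k\geq K$, which truncates the recursion, so the remaining data is again forced. The relations give $\lambda_k^p=\prod_{i\leq k}\epsilon^p_{(0,i]}$, a fixed constant $\pm1$. Hence each $\lambda_k$ ranges over a $\mz_p$-torsor of $p$-th roots of that sign, and the set of admissible highest weights is in bijection with $\mz_p^K$. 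To pin down this bijection uniformly, write $\lambda_k=\prod_{i\leq k}\epsilon_{(0,i]}\,q^{2\sum_{i\leq k}r_i}$.

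The third step is existence. For each admissible $(\lambda_1,\dots,\lambda_K)$ we take the big cell representation of \cite{DCM1}[Theorem 4.15] attached to $\epsilon$, which factors through $Z\mapsto T^lE_\epsilon T^u$. As in Theorem \ref{thm: boh rep classification}, we shift $r_i\mapsto r_i+k_ip$ to obtain a dominant ordered weight $r'$ without changing the $q$-powers. We then specialize to the root of unity and pass to the irreducible quotient containing the highest weight vector. We must check that this quotient factors through $\botek$, meaning the relations $t_{ii}^{kp}=1$ and $t_{ij}^{\tp}=0$ act trivially. On the highest weight vector this follows from the choice of weights. On the whole irreducible quotient it follows because the elements $t_{ij}^{\tp}$ (anti)commute with everything, as shown from the general commutation relation for $\ote$, so they generate a proper ideal whose image we may quotient out.

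The main obstacle is the second step, specifically whether the truncated recursion really determines all of $V$ when $K<N$. The extra generators $z_{ij}$ with $i$ or $j>K$ are not controlled by invertible leading minors. For these I would first try to use the homomorphism to $\ote$, writing $z_{ij}=\sum_m \epsilon_{(0,m]}t_{im}t_{mj}$ with $m\leq K$. This reduces the question to the structure of $\botek$-modules generated by a highest weight vector.
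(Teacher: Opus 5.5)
Your proposal is correct and follows the paper's route. The paper proves this proposition only by invoking the argument of Theorem \ref{thm: boh rep classification}: weights are read off from the $Z_{[k],[k]}$ with $k\leq K$, they satisfy $\lambda_k^p=\prod_{i\leq k}\epsilon_{(0,i]}^p$, and existence comes from shifted big cell representations for the given $\epsilon$, which is what you carry out. On the obstacle you flag, avoid reducing to $\botek$-modules, since a module over the image algebra need not extend to one; instead control the entries with indices $>K$ inside the algebra itself, e.g.\ via Equation \ref{eqn: zij relation 3}, which needs only $z_{11}$ invertible, together with the vanishing of minors of size $>K$ (for $K=1$ this gives $z_{ij}=z_{i1}z_{11}^{-1}z_{1j}$ for $i,j\geq 2$).
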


\subsection{Realizing $\bsltwo$ exact comodule algebras as wee REAs}\label{section: MNSS algebras}
The second family of wee REAs we want to consider are those related to exact comodule algebras over $\bsltwo$.
\begin{defin}
An \textbf{exact comodule algebra} $A$ over a Hopf algebra $H$ is a comodule algebra such that $\Rep A$ is exact over $\Rep H$, i.e. for any projective representation $P\in\Rep H$, and representation $W\in\Rep A$, $W\otimes P$ is projective in $\Rep A$.
\end{defin}
See \cite{AndruMombelli, EGNO} for details. 
In the case of $\bsltwo$, for $q$ an odd root of unity, the exact comodule algebras over $\bsltwo$ were classified in \cite{Mombelli}, and given explicit presentations in \cite{NSS}. Our aim here is to see which of the algebras in \cite{NSS} can be given presentations as quotients of $\ohtwo$. If the algebras can be given such a presentation, then it means that they can be viewed as a wee REA. 

To aid the reader, we try to keep notation consistent with \cite{NSS}. Let $q$ be an $p$th root of unity, for odd $p\in\mn$. Let $Div(p)$ denote the set of positive divisors of $p$. Let $r\in Div(p)$ and $\alpha,\beta,\nu,\epsilon,\zeta\in\mc$ with $(\alpha,\beta)\neq (0,0)$. For $q$ an odd root of unity, the exact comodule algebras over $\bsltwo$ were listed with explicit presentations in \cite{NSS}[Section 5.6]. The algebras are split into six families:
\begin{gather}
\mathscr{A}_0(r), \quad \mathscr{A}_1(r;\xi), \quad \mathscr{A}_2(r;\zeta), \quad \mathscr{A}_3(r;\xi;\zeta), \quad \mathscr{A}_3(p;\xi,\zeta,\eta), \quad \mathscr{A}_4(\alpha;\beta;\xi).
\end{gather}
We can relate these algebras to the reflection equation algebra as follows:
\begin{prop}\label{prop: exact comodule algebras via oh}
For $q$ an odd root of unity, every exact comodule algebra over $\bsltwo$ can be constructed as a quotient of a subalgebra of $\ohtwo$.
\end{prop}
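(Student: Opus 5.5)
The plan is to go through the six families listed in \cite{NSS}[Section 5.6] one at a time. For each algebra $\mathscr{A}$ we will find a subalgebra $B\subseteq\ohtwo$ and a coideal-compatible ideal $\mathcal{J}\subseteq B$ with $B/\mathcal{J}\simeq\mathscr{A}$, as comodule algebras over $\bsltwo$.

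\textbf{Dictionary.} The main tool is the embedding $\ohtwo\rightarrow\mathcal{O}_q(T(2))\simeq U_q(\mathfrak{gl}_2)^{\text{cop}}$, $Z\mapsto T^lT^u$, from Theorem \ref{thm: oh simeq ot} and Proposition \ref{prop: oh coaction to ot}. Explicitly,
\[
z_{11}\mapsto t_{11}^2,\qquad z_{12}\mapsto t_{11}t_{12},\qquad z_{21}\mapsto t_{21}t_{11},\qquad z_{22}\mapsto t_{21}t_{12}+t_{22}^2.
\]
Combined with Theorem \ref{thm: ugl simeq ot}, this identifies each $z_{ij}$ with an explicit element of $U_q(\mathfrak{gl}_2)$:
\begin{itemize}
\item $z_{11}$ corresponds to $K_1^{-2}$, which is (up to the central factor $K_1K_2$) $\hk^{-1}$;
\item $z_{12}$ and $z_{21}$ correspond to $F$-type and $E$-type elements twisted by powers of $K_1$;
\item $Z_{12,12}$ corresponds to $(K_1K_2)^{-2}$.
\end{itemize}
Imposing $\prod K_i=1$ as in Definition \ref{def: alt bsla} kills the last of these, so we pass to $\bsla$. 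The coaction $\beta$ restricted to this image is then the restriction of $\Delta^{\text{cop}}$, that is, the regular coaction on a left coideal subalgebra. This is exactly how \cite{Mombelli,NSS} present their exact comodule algebras: as coideal subalgebras of $\bsltwo$, or as deformations of these with scalars in the $p$th power relations.

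\textbf{Case-by-case matching.}
\begin{itemize}
\item $\mathscr{A}_0(r)$ is the group algebra of the cyclic subgroup of order $r$ generated by a power of the grouplike. We take $B$ to be the subalgebra generated by $z_{11}^{p/r}$ and impose $z_{11}^{p}=1$. Since $z_{11}$ coacts as a grouplike (Proposition \ref{prop: oh coaction to ot}), this is a comodule algebra.
\item $\mathscr{A}_1(r;\xi)$ adjoins one nilpotent-type generator. We take $B=\langle z_{11}^{p/r},z_{21}\rangle$ with the relation $z_{21}^{p}=\xi$ (suitably normalised). The $q$-commutation of $z_{11}$ with $z_{21}$ comes from Equation \ref{eqn: Zkk q commute}.
\item $\mathscr{A}_2(r;\zeta)$ is the same with $z_{12}$ in place of $z_{21}$.
\item $\mathscr{A}_3$ uses both $z_{12}$ and $z_{21}$. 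In the family $\mathscr{A}_3(p;\xi,\zeta,\eta)$ we additionally need $z_{22}$, or $z_{11}^{-1}Z_{12,12}$, to produce the commutator relation. The relation should come from Equation \ref{eqn: zij relation} with $k=1$, after $Z_{12,12}$ is specialised to a scalar.
\end{itemize}

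\textbf{Checks for each case.} Each identification needs three checks:
\begin{enumerate}
\item the generators of \cite{NSS} are the images of our chosen elements;
\item their defining relations hold in $\ohtwo$ modulo the chosen ideal, using the commutation relations of $\ohtwo$ and the $p$th power computations from Lemma \ref{lem: ast expansion 2} and Theorem \ref{thm: alternative presentation};
\item the ideal is a coideal for $\beta$.
\end{enumerate}
The third check is the comodule condition $\beta(\mathcal{J})\subseteq\mathcal{J}\otimes\ugl+\ohtwo\otimes\mathcal{I}_U$, where $\mathcal{I}_U$ is the kernel of $\ugl\rightarrow\bsltwo$. We will verify it on generators using $\beta(z_{ij})=\sum z_{kl}\otimes t_{ik}t_{lj}$. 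The deformed relations such as $z_{21}^p-\xi$ are handled by the fact that $t_{ij}^{p}$ is central and lies in $\mathcal{I}_U$ for $i\neq j$, together with $q$-binomial vanishing as in the proof of Lemma \ref{lem: ast expansion 4}. This vanishing makes $\beta(z_{21}^p)$ collapse to $z_{21}^p\otimes(\text{grouplike})$ plus terms in $\ohtwo\otimes\mathcal{I}_U$.

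\textbf{Main obstacle.} I expect the hardest case to be $\mathscr{A}_4(\alpha;\beta;\xi)$, whose generator is a mixture $\alpha E+\beta F\hk$-type, rather than the full-parameter family of $\mathscr{A}_3$. I would try the element $\alpha z_{21}+\beta' z_{12}$, possibly multiplied by a power of $z_{11}$ to fix the $K$-weights, and then show the following:
\begin{itemize}
\item it spans a subcomodule together with a scalar, using the coaction formula;
\item its $p$th power equals $\xi$ plus terms that vanish modulo the quotient.
\end{itemize}
The second point requires a $q$-binomial expansion of $(\alpha z_{21}+\beta' z_{12})^{p}$, based on relation \ref{eqn: zij relation 3} and Equation \ref{eqn: zij relation}, and this is the delicate computation. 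If the relevant elements do not lie in $\ohtwo$ itself, we will pass to $\ohtwo[z_{11}^{-1}]$. This localisation is harmless because $z_{11}^{p}=1$ in the quotient.
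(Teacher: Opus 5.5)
\textbf{The gap: the generators of $\mathscr{A}_1$, $\mathscr{A}_2$, $\mathscr{A}_3$ are not normalised.} Your overall strategy is the paper's: run through the six families of \cite{NSS} and realise each one explicitly. But the realisations you write down for these three families lack the normalisation that gives them the coaction of \cite{NSS}.

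First, $\beta$ must be converted into a left $\bsltwo$-coaction in the normalisation of \cite{NSS}. The paper does this with $\tilde\beta$: flip, apply $\varphi$, impose $K_1K_2=1$. Your dictionary skips this step, which is why your roles of $z_{12}$ and $z_{21}$ are swapped relative to the paper. With your $G\mapsto z_{11}^{p/r}$ and $X$ built from $z_{21}$, the relation $GX=q^{2p/r}XG$ comes out with $q^{-2p/r}$.

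More seriously, even after the conversion one has $\tilde\beta(z_{12})=(1-q^2)E\otimes z_{11}+1\otimes z_{12}$. The root-vector term carries $z_{11}$ instead of $1$, and the grouplike is $1$ instead of $\hk^{-1}$. By contrast, \cite{NSS} require $\delta(X)=(q-q^{-1})\hk^{-1}E\otimes 1+\hk^{-1}\otimes X$. No scalar normalisation repairs this; you must multiply by $z_{11}^{-1}$, which is coacted on by $\hk^{-1}$:
\[
\tilde\beta(z_{11}^{-1}z_{12})=(1-q^2)\hk^{-1}E\otimes 1+\hk^{-1}\otimes z_{11}^{-1}z_{12}.
\]
This gives $X\mapsto -q^{-1}z_{11}^{-1}z_{12}$ and similarly $Y\mapsto (q-q^3)^{-1}z_{11}^{-1}z_{21}$. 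Here $z_{11}^{-1}=z_{11}^{p-1}$ once $z_{11}^p=1$, and $z_{11}^p=(z_{11}^{p/r})^r$ lies in the subalgebra.

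Without this normalisation, for $r<p$ your $B=\langle z_{11}^{p/r},z_{21}\rangle$ is not even $\beta$-stable. Indeed $\beta(z_{21})=z_{11}\otimes t_{21}t_{11}+z_{21}\otimes t_{22}t_{11}$, and $z_{11}\notin B$.

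\textbf{Consequence for $\mathscr{A}_3$.} The normalised generators satisfy, identically, $XY-q^2YX=1-\sigma_2 z_{11}^{-2}$; this is Equation \ref{eqn: zij relation} with $k=1$. So both subfamilies require specialising the central element $\sigma_2$: to $0$ for $\mathscr{A}_3(r;\xi;\zeta)$, and to $\eta$ for $\mathscr{A}_3(p;\xi;\zeta;\eta)$. In the latter, $r=p$ is needed so that $G^{-2}=z_{11}^{-2}$. Also, $z_{22}$ is not an extra generator; it is determined once $\sigma_2=\eta$. Finally, $K_1K_2=1$ does not kill $Z_{12,12}$. It makes it coinvariant, which is exactly what makes $\sigma_2=\eta$ an admissible relation.

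\textbf{$\mathscr{A}_4$: a different but workable route.} The paper sets $z_{11}\mapsto 0$, $z_{12}\mapsto (q^{-1}-q)^{-1}\beta$, $z_{21}\mapsto -q^{-1}\alpha$ and $W\mapsto z_{22}$. Then $\tilde\beta(z_{22})$ reduces exactly to $\delta(W)$, so $\mathscr{A}_4$ is an honest quotient of $\ohtwo$. The later wee-REA corollary and the $\chi_w$ character picture rely on this.

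Your candidate, once normalised as above, is $W'=z_{11}^{-1}\bigl(-q^{-1}\alpha z_{12}+(q-q^3)^{-1}\beta z_{21}\bigr)=\alpha X+\beta Y$. It does have coaction $\delta(W)$, so it proves the proposition as stated, as a quotient of a subalgebra. However, the delicate $p$-th power expansion you anticipate is unnecessary:
\begin{enumerate}
\item $\mathbb{C}[W']$ is a polynomial algebra; use the weight grading with $\deg z_{12}=1=-\deg z_{21}$ and $\deg z_{11}=\deg z_{22}=0$.
\item The surjection $W'\mapsto W$ onto $\mathscr{A}_4$ intertwines the coactions, because it does so on the generator.
\item Hence its kernel, generated by $\phi_{\alpha,\beta,\xi}(W')$, is automatically coaction-stable.
\end{enumerate}
In every case, the real content is matching the coactions of the generators exactly. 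Your $q$-binomial argument for $\beta(z_{21}^p)$ is a correct direct stability check, but it does not replace that matching.
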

We split the proof of this into parts, recalling the definition of each algebra, and then giving a corresponding presentation in terms of $\ohtwo$.\\

Each of the comodule algebras are defined using some combination of generators $G,X,Y,W$. Denote the $\bsltwo$ coaction by $\delta$. In each case, the coaction on these generators is given by
\begin{gather}
\delta(G) = \hk^{\frac{p}{r}}\otimes G, \qquad \delta(X) = (q-q^{-1}) \hk^{-1}E\otimes 1+\hk^{-1}\otimes X, \qquad \delta(Y) = F\otimes 1+ \hk^{-1}\otimes Y \\
\delta(W) =  \left(\alpha(q-q^{-1})\hk^{-1}E+\beta F\right)\otimes 1+\hk^{-1}\otimes W.
\end{gather}
The defining relations of $\ohtwo$ are:
\begin{gather*}
z_{11}z_{22} = z_{22}z_{11}, \quad z_{11}z_{12} = q^{2}z_{12}z_{11}, \quad z_{11}z_{21} = q^{-2}z_{21}z_{11} \\
z_{12}z_{22}-z_{22}z_{12} = (q^2-1)z_{11}z_{12}, \quad z_{21}z_{22}-z_{22}z_{21} = (1-q^2)z_{21}z_{11} \\ 
z_{12}z_{21}-z_{21}z_{12} = (1-q^2)(z_{22}z_{11}-z_{11}^2).
\end{gather*}
We also recall the central elements $\sigma_1,\sigma_2$ of $\ohtwo$, referred to as the \textit{quantum trace} and \textit{quantum determinant} respectively:
\begin{gather*}
\sigma_1 := qz_{11}+q^{-1}z_{22}, \qquad \sigma_2 := Z_{12,12} = z_{22}z_{11}-q^{-2}z_{21}z_{12}.	
\end{gather*}
The generators and central elements can then be seen to satisfy the following
\begin{gather}\label{eqn: ohtwo alt relations}
z_{12}z_{21} = -q^{2}\sigma_2+q\sigma_1 z_{11}-z_{11}^2 \\
z_{21}z_{12} = -q^{2}\sigma_2+q^3\sigma_1 z_{11}-q^4z_{11}^2
\end{gather}

Before showing how to realize the exact comodule algebras, we first need to modify the coaction on $\ohtwo$ so that it will correctly match with the presentation in \cite{NSS}. As we are only considering odd roots of unity here, then following the discussion in relation to Definition \ref{def: alt bsla}, we can view $\bsltwo$ as a quotient of $\bgltwo$ by the relation $K_1K_2=1$, so that we are in fact technically considering $\bslatwo$. Further, the coaction on $\oh$ given in Proposition \ref{prop: oh coaction to ot} actually maps to $\ugl^{\text{cop}}$ instead of $\ugl$. In the case of $N=2$, we can correct this by applying the isomorphism
\[
(\varphi\otimes\id)\circ\Sigma:\oh\otimes\usltwo^{\text{cop}}\rightarrow \usltwo\otimes\oh,\]
\[
\varphi(E)=F, \quad \varphi(F)=E, \quad \varphi(\hk)=\hk^{-1}, \quad \Sigma(x\otimes y) = y\otimes x.\]
Hence combining these notions, and modifying the coaction by both $\varphi$ and the quotient relation $K_1K_2=1$, we then obtain a coaction
\[
\tilde{\beta}:\ohtwo\rightarrow\usltwo\otimes\ohtwo,\]
In terms of the generators, this is given by
\begin{gather}\label{eqn: modified bsltwo coaction}
\tilde{\beta}(z_{11}) = \hk\otimes z_{11}, \quad \tilde{\beta}(z_{12}) = (1-q^2)E\otimes z_{11}+1\otimes z_{12},  \quad \tilde{\beta}(z_{21}) = (q-q^3) \hk F\otimes z_{11}+1\otimes z_{21} \\
\tilde{\beta}(z_{22}) = q^{-1}(1-q^2)^2 FE\otimes z_{11}+(q^{-1}-q)F\otimes z_{12}+(1-q^2)\hk^{-1} E\otimes z_{21}+\hk^{-1}\otimes z_{22}.
\end{gather}

\subsection*{$\mathscr{A}_0(r)$}

The algebra $\mathscr{A}_0(r)$ is generated by $G$ with the relation
\[
G^r=1.\]
In terms of $\ohtwo$, this is realized as a comodule algebra by
\[
z_{11},z_{12},z_{21}\mapsto 0, \qquad z_{22}^p=1, \qquad G^{-1}\mapsto z_{22}^{\frac{p}{r}}.\]
In particular, $\mathscr{A}_0(p)$ can be obtained directly as a quotient of $\ohtwo$, whilst for $r<p$, $\mathscr{A}_0(r)$ must be obtained as a quotient of a subalgebra of $\ohtwo$.

\subsection*{$\mathscr{A}_1(r,\xi)$}

The algebra $\mathscr{A}_1(r,\xi)$ is generated by $G,X$ with relations
\begin{gather}\label{eqn: GX relations}
G^r=1, \qquad X^N=\xi, \qquad GX=q^{\frac{2p}{r}}XG.
\end{gather}
We can realize this by taking the subalgebra of $\ohtwo$ generated by $z_{11}^{\frac{p}{r}}$, $z_{11}^{-1}z_{12}$, and applying the quotient relations
\[
z_{11}^p=1, \qquad z_{12}^p=-\xi.\]
We then obtain $\mathscr{A}_1(r,\xi)$ as a comodule algebra from $\ohtwo$ via
\[
G\mapsto z_{11}^{\frac{p}{r}}, \qquad X\mapsto -q^{-1}z_{11}^{-1}z_{12}.\]

\subsection*{$\mathscr{A}_2(r,\zeta)$}

The algebra $\mathscr{A}_2(r,\zeta)$ is generated by $G,Y$ with relations
\begin{gather}\label{eqn: GY relations}
G^r=1, \quad Y^p=\zeta, \quad GY = q^{\frac{-2p}{r}}YG 
\end{gather}
We can realize this by taking the subalgebra of $\ohtwo$ generated by $z_{11}^{\frac{p}{r}},z_{11}^{-1}z_{21}$, with the quotient relations
\[
z_{11}^p=1, \qquad z_{21}^p=\zeta(q-q^3)^p.\]
We then obtain $\mathscr{A}_2(r;\zeta)$ as a comodule algebra from $\ohtwo$ via
\[
G\mapsto z_{11}^{\frac{p}{r}}, \qquad Y\mapsto (q-q^3)^{-1}z_{11}^{-1}z_{21}.\]

\subsection*{$\mathscr{A}_3(r;\xi;\zeta)$ and $\mathscr{A}_3(p;\xi;\zeta;\eta)$}

The algebra $\mathscr{A}_3(r;\xi;\zeta)$ is generated by $G,X,Y$, with relations \ref{eqn: GX relations} and \ref{eqn: GY relations}, along with the relation
\begin{gather}\label{eqn: XY relation 1}
XY-q^2YX=1.
\end{gather}
The algebra $\mathscr{A}_3(p;\xi;\zeta;\eta)$ is generated by $G,X,Y$, with relations \ref{eqn: GX relations}, \ref{eqn: GY relations}, and
\begin{gather}\label{eqn: XY relation 2} 
XY-q^2YX=1-\eta G^{-2}.
\end{gather}
Then these algebras are realized from $\ohtwo$ via
\[
G\mapsto z_{11}^{\frac{p}{r}}, \qquad X\mapsto -q^{-1}z_{11}^{-1}z_{12}, \qquad Y\mapsto (q-q^3)^{-1}z_{11}^{-1}z_{21},\]
with quotient relations
\[
z_{11}^p=1, \qquad z_{12}^p=-\xi, \qquad z_{21}^p=\zeta(q-q^3)^p, \qquad \sigma_2=\eta.\]
Where for $A_3(r;\xi;\zeta)$ we take $\eta=0$, so in this case we quotient by $\sigma_2=0$.

We note that in this realization, as we are quotienting to fix the central element $\sigma_2$ by a specific value, this will allow us to write the generator $z_{22}$ in terms of the other elements, i.e.
\[
z_{22} = (\sigma_2+q^{-2}z_{21}z_{12})z_{11}^{-1}.\]
Hence if $r=p$, then these algebras will in fact be realized as quotients of $\ohtwo$, whereas if $r<p$ we instead need to take quotients of subalgebras of $\ohtwo$ to realize them. We note that the $r=p$ algebras in this case can be seen to be closely related to the wee REAs of big cell type.

\subsection*{$\mathscr{A}_4(\alpha;\beta;\xi)$}

The algebra $\mathscr{A}_4(\alpha;\beta;\xi)$ is generated by $W$, with relation $\phi_{\alpha,\beta,\xi}(W)=0$, where $\phi_{\alpha,\beta,\xi}$ is a certain polynomial given in \cite{NSS}[Equation 5.13]. We can realize it as a quotient of $\ohtwo$ via
\begin{gather*}
z_{11}\mapsto 0, \qquad z_{12}\mapsto (q^{-1}-q)^{-1}\beta, \qquad z_{21}\mapsto-q^{-1}\alpha, \qquad \phi_{\alpha,\beta,\xi}(z_{22})=0.
\end{gather*} 
where
\[
W\mapsto z_{22}.\]
The relation $\phi_{\alpha,\beta,\xi}(W)=0$, in terms of the REA, can be thought of as describing the minimal possible algebra generated by the $\bsltwo$ coaction on a certain $\ohtwo$ character. Consider the family of $\ohtwo$ characters
\[ \chi_w(Z) = \left(\begin{array}{cc} 0 & \chi_{12} \\ \chi_{21} & w \end{array}\right) = \left(\begin{array}{cc} 0 & (q^{-1}-q)^{-1}\beta \\ -q^{-1}\alpha & w\end{array}\right),\]
for fixed $\chi_{12},\chi_{21}\in\mc\setminus\{0\}$, and a choice of $w\in\mc$.  Write
\begin{gather}\label{eqn: def of u,v}
u:= \frac{1}{2}(w+\sqrt{w^2+4\chi_{21}\chi_{12}}), \qquad v:= \frac{1}{2}(w-\sqrt{w^2+4\chi_{21}\chi_{12}}),
\end{gather}
so $w=u+v$. Let $\mathcal{V}_2$ denote the two-dimensional irreducible $\bsltwo$ representation. Then for generic choices of $\chi_{12},\chi_{21},w$, and with respect to the coaction \ref{eqn: modified bsltwo coaction}, we have the tensor product decomposition
\begin{gather}\label{eqn: W char tensor product decomposition}
\mathcal{V}_2\otimes\chi_{w}\simeq\chi_{qu+q^{-1}v}\oplus\chi_{q^{-1}u+qv}.
\end{gather}
More generally we have the decomposition
\begin{gather}
	\mathcal{V}_2\otimes\chi_{q^iu+q^{-i}v}\simeq\chi_{q^{i+1}u+q^{-i-1}v}\oplus\chi_{q^{i-1}u+q^{1-i}v}.
\end{gather}
From this, we see that the possible characters that can appear in the decomposition of $\mathcal{V}_2^{\otimes k}\otimes\chi_w$ are
\[
\chi_{q^i u+q^{-i}v}, \quad -k\leq i\leq k.\]
Using $W\mapsto z_{22}$, the relation $\phi_{\alpha,\beta,\xi}(W)=0$, in the form of \cite{NSS}[Equation 5.22], is
\begin{gather}\label{eqn: W relation}
	\phi_{\alpha,\beta,\xi}(z_{22}) = \prod\limits_{i=0}^{p-1}(z_{22}-(q^iu+q^{-i}v)) = 0.
\end{gather}
Hence the relation defining $\mathscr{A}_4(\alpha;\beta;\xi)$ can be seen as describing the characters that can appear in the decomposition of $\mathcal{V}_2^{\otimes k}\otimes\chi_w$.
\begin{defin}
Based on the preceding, we relabel $\mathscr{A}_4(\alpha;\beta;\xi):=\mathscr{A}_{4}^{\chi}(\chi_{12};\chi_{21};w)$, when we view the algebra as coming from an $\ohtwo$ character. Here, $\chi_{12},\chi_{21},w\in\mc$ are chosen to satisfy Relation \ref{eqn: W relation}.
\end{defin}

From our construction of the exact comodule algebras, by considering the cases where the construction is directly obtained as a quotient of $\ohtwo$, we then get the following:
\begin{corr}
	The exact comodule algebras $\mathscr{A}_0(p)$, $\mathscr{A}_3(p;\xi;\zeta;\eta)$, and $\mathscr{A}_4(\alpha;\beta;\xi)$, over $\bsltwo$, are wee reflection equation algebras.  
\end{corr}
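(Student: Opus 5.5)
The corollary follows from the definition of a wee REA once three facts are in place for each of $\mathscr{A}_0(p)$, $\mathscr{A}_3(p;\xi;\zeta;\eta)$ and $\mathscr{A}_4(\alpha;\beta;\xi)$. Each must be a quotient $\ohtwo/\mci_H$ of all of $\ohtwo$, not merely of a subalgebra. It must be finite dimensional. And the coaction $\tilde{\beta}$ of Equation \ref{eqn: modified bsltwo coaction} must satisfy $\tilde\beta(\mci_H)\subseteq \mci_U\otimes\ohtwo+\usltwo\otimes\mci_H$, so that it descends to a $\bslatwo$-coaction. Finite dimensionality and the comodule algebra property are supplied by \cite{NSS}, once the algebra produced from $\ohtwo$ is identified with the one there. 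So the real content is to check that, in these three cases, the realization of Proposition \ref{prop: exact comodule algebras via oh} is surjective from $\ohtwo$ and compatible with the coactions.

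For $\mathscr{A}_0(p)$ I take $\mci_H=(z_{11},z_{12},z_{21},z_{22}^p-1)$. The map $G^{-1}\mapsto z_{22}$ is then surjective, since $p/r=1$. Compatibility follows from Equation \ref{eqn: modified bsltwo coaction}. The images of $z_{11},z_{12},z_{21}$ under $\tilde\beta$ lie in $\usltwo\otimes\mci_H$. Modulo $\mci_H$, $\tilde\beta(z_{22})\equiv\hk^{-1}\otimes z_{22}$, so $\tilde\beta(z_{22}^p-1)\equiv \hk^{-p}\otimes z_{22}^p-1\otimes 1$, which lies in the required ideal because $\hk^p=1$ in $\bsltwo$.

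For $\mathscr{A}_3(p;\xi;\zeta;\eta)$ I take $\mci_H$ generated by $z_{11}^p-1$, $z_{12}^p+\xi$, $z_{21}^p-\zeta(q-q^3)^p$ and $\sigma_2-\eta$. In the quotient, $z_{11}$ is invertible with $z_{11}^{-1}=z_{11}^{p-1}$. Hence $G=z_{11}$, $X$ and $Y$ all lie in the image. Conversely, $z_{12}$ and $z_{21}$ are recovered from $X,Y,G$. Finally $z_{22}=(\sigma_2+q^{-2}z_{21}z_{12})z_{11}^{-1}$ (the paper's remark), so the image is all of the quotient. The defining relations \ref{eqn: GX relations}, \ref{eqn: GY relations} and \ref{eqn: XY relation 2} then follow from the $\ohtwo$ relations together with \ref{eqn: ohtwo alt relations}. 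For coaction compatibility, $\sigma_2$ is coinvariant, being the central quantum determinant with $\tilde\beta(\sigma_2)=1\otimes\sigma_2$ once $K_1K_2=1$. The $p$th powers are handled by the $q$-binomial argument already used in Lemma \ref{lem: ast expansion 4}: $\tilde\beta(z_{12})=(1-q^2)E\otimes z_{11}+1\otimes z_{12}$ is a sum of two $q^2$-commuting terms, so its $p$th power is $(1-q^2)^pE^p\otimes z_{11}^p+1\otimes z_{12}^p$, and $E^p=0$ in $\bsltwo$. The same argument applies to $z_{21}$ and $z_{11}$.

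For $\mathscr{A}_4$, the ideal is generated by $z_{11}$, $z_{12}-(q^{-1}-q)^{-1}\beta$, $z_{21}+q^{-1}\alpha$ and $\phi_{\alpha,\beta,\xi}(z_{22})$. Surjectivity is immediate since $W\mapsto z_{22}$. The main obstacle is the compatibility $\tilde\beta(\phi(z_{22}))\in\mci_U\otimes\ohtwo+\usltwo\otimes\mci_H$. Here I would use the character interpretation rather than expand directly. Modulo $z_{11}$ and the character values, $\tilde\beta(z_{22})\equiv(q^{-1}-q)\chi_{12}F\otimes 1+(1-q^2)\chi_{21}\hk^{-1}E\otimes 1+\hk^{-1}\otimes z_{22}$, which matches $\delta(W)$ from \cite{NSS}. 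So the required compatibility is precisely the statement that $\mathscr{A}_4$ is a $\bsltwo$-comodule algebra, which \cite{NSS} proves. Alternatively, Equation \ref{eqn: W relation} together with the tensor decomposition \ref{eqn: W char tensor product decomposition} shows that $\phi$ annihilates every character arising in $\mathcal{V}_2^{\otimes k}\otimes\chi_w$, since $q^p=1$ closes the orbit $\{q^iu+q^{-i}v\}$.
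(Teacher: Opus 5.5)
Your proposal is correct and follows the paper's own route. The paper reads the corollary straight off the realizations in Proposition \ref{prop: exact comodule algebras via oh}, keeping exactly the cases where the realization is a quotient of all of $\ohtwo$ rather than of a subalgebra. Your checks of surjectivity, of the identification with the algebras of \cite{NSS}, and of $\tilde\beta(\mci_H)\subseteq \mci_U\otimes\ohtwo+\usltwo\otimes\mci_H$ fill in details the paper leaves implicit. The one soft spot is the character-based ``alternative'' for $\mathscr{A}_4$, which is only heuristic, since vanishing of $\phi$ on generic characters of $\mathcal{V}_2^{\otimes k}\otimes\chi_w$ does not by itself give the ideal containment. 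Your main argument does not need it: matching $\tilde\beta(z_{22})$ with $\delta(W)$ and invoking the comodule algebra structure from \cite{NSS} is enough.
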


\begin{remark}
Based on the results of \cite{Mombelli}, (see also Section \ref{section: Dn module categories}), we expect Proposition \ref{prop: exact comodule algebras via oh} to fail in the even root of unity case, and instead require considering a larger algebra extending $\ohtwo$.
\end{remark}

\section{Semisimplification of Module Categories}\label{section: 5}

Given a non-semisimple tensor category $\mathcal{C}$, we can \textit{semisimplify} it to produce a semisimple tensor category $\widetilde{\mathcal{C}}$. If $\mathcal{M}$ is a module category over $\mathcal{C}$, then we can ask if there a corresponding notion of semisimplification for module categories, to produce a module category $\widetilde{\mathcal{M}}$ over $\widetilde{\mathcal{C}}$. To our knowledge, this has not been defined before, and so we do so now.

As motivation, we first consider the following construction: If $R$ is a ring, $I$ a two-sided $R$-ideal, and $V$ an $R$-module, then
\[
IV:=\{x_1v_1+...+x_kv_k \lvert x_i\in I, v_i\in V\}\]
is an $R$-submodule of $V$. Further, 
\[
V/(IV)\]
will naturally be an $R/I$-module. The semisimplification of module categories can then be thought of as a categorification of this construction. We next recall the definition of semisimplification for tensor categories \cite{BarrettWestbury, EtingofOstrik}.

\begin{defin}
Let $\mathcal{C}$ be a spherical tensor category. A \textbf{tensor ideal} $\mathcal{I}$ of $\mathcal{C}$ is a family of subspaces $\mathcal{I}_{X,Y}\subseteq\Hom(X,Y)$ for objects $X,Y\in\mathcal{C}$ such that for any objects $X,Y,Z,T\in\mathcal{C}$, we have:
\begin{enumerate}
	\item If $\alpha\in \mathcal{I}_{X,Y}$, $\beta\in\Hom(Y,Z)$, then $\beta\circ\alpha\in \mathcal{I}_{X,Z}$.
	\item If $\alpha\in \mathcal{I}_{X,Y}$, $\beta\in\Hom(Z,X)$, then $\alpha\circ\beta\in \mathcal{I}_{Z,Y}$.
	\item If $\alpha\in \mathcal{I}_{X,Y}$, $\beta\in\Hom(Z,T)$, then
	\[ \alpha\otimes \beta\in \mathcal{I}_{X\otimes Z,Y\otimes T}, \quad \text{ and } \quad \beta\otimes\alpha\in \mathcal{I}_{Z\otimes X,T\otimes Y}.\]
\end{enumerate}
\end{defin}
A particularly important example of tensor ideal is the following:
\begin{defin}
Let $\mathcal{C}$ be a spherical tensor category. The \textbf{ideal of negligible morphisms} $\mathcal{N}$ is a tensor ideal defined by $f\in \mathcal{N}_{X,Y}$ if for all $g\in\Hom(Y,X)$ we have $\Tr(f\circ g)=0$.
\end{defin}

\begin{prop}
Let $\mathcal{C}$ be a tensor category and $\mathcal{I}$ a tensor ideal. Then we can construct a quotient tensor category $\mathcal{C}/\mathcal{I}$, with objects the same as $\mathcal{C}$, and morphisms given by
\[
\Hom_{\mathcal{C}/\mathcal{I}}(X,Y):=Hom_{\mathcal{C}}(X,Y)/\mathcal{I}_{X,Y}.\]
In particular, $\mathcal{C}/\mathcal{N}$ is a semisimple tensor category, with objects given by the objects of non-zero dimension in $\mathcal{C}$.	
\end{prop}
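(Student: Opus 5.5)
We have to prove that for a tensor ideal $\mathcal{I}$ the quotient $\mathcal{C}/\mathcal{I}$ is a tensor category, and that $\mathcal{C}/\mathcal{N}$ is semisimple with simple objects the indecomposables of non-zero dimension. The plan is to check the structure first and then use the trace pairing for semisimplicity.

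\emph{Category structure.} Composition on $\Hom_{\mathcal{C}}(X,Y)/\mathcal{I}_{X,Y}$ is well defined by properties 1 and 2 of a tensor ideal. Indeed, if $\alpha-\alpha'\in\mathcal{I}$ and $\beta-\beta'\in\mathcal{I}$, then
\[
\beta\alpha-\beta'\alpha'=\beta(\alpha-\alpha')+(\beta-\beta')\alpha'\in\mathcal{I}.
\]
Identities and associativity descend from $\mathcal{C}$.

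\emph{Tensor structure.} The tensor product of morphisms descends by property 3, via the analogous identity
\[
\alpha\otimes\beta-\alpha'\otimes\beta'=(\alpha-\alpha')\otimes\beta+\alpha'\otimes(\beta-\beta').
\]
The associativity and unit constraints, together with duals (evaluation and coevaluation), are taken to be the images of those in $\mathcal{C}$. The pentagon, triangle and rigidity identities then hold because the quotient functor $\mathcal{C}\to\mathcal{C}/\mathcal{I}$ is $\mathbb{C}$-linear, strictly preserves composition and $\otimes$, and is the identity on objects. If $\mathcal{C}$ is spherical and $\mathcal{I}$ is closed under traces, pivotality descends as well.

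\emph{$\mathcal{N}$ is a tensor ideal.} Closure under composition on either side is immediate from cyclicity of the trace. For closure under $\otimes$, one uses that in a spherical category
\[
\Tr\bigl((\alpha\otimes\beta)\circ g\bigr)=\Tr\bigl(\alpha\circ(\id\otimes \mathrm{ptr}_2)(\dots)\bigr)
\]
for an appropriate partial trace. Concretely, for $g\in\Hom(Y\otimes T,X\otimes Z)$ one takes the partial trace over the second factor to land in $\Hom(Y,X)$, and then applies negligibility of $\alpha$. Sphericality is needed here so that right and left partial traces agree when handling $\beta\otimes\alpha$.

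\emph{Semisimplicity.} This is the main step. Hom spaces are finite dimensional, so every object of $\mathcal{C}$ decomposes into indecomposables, and $\End(X)$ of an indecomposable $X$ is local, equal to $\mathbb{C}\,\id\oplus\mathrm{rad}$. Nilpotent endomorphisms have zero trace, and so do their composites with any endomorphism, since the radical is an ideal. Hence $\mathrm{rad}\subseteq\mathcal{N}$.
\begin{itemize}
\item If $\dim X\neq 0$, then $\id_X\notin\mathcal{N}$, so $\End_{\mathcal{C}/\mathcal{N}}(X)=\mathbb{C}$.
\item If $\dim X=0$, then $\id_X\in\mathcal{N}$, because $\Tr(c\,\id+r)=c\dim X=0$, so $X\cong 0$ in the quotient.
\end{itemize}
For indecomposables $X\not\cong Y$ one shows $\Hom(X,Y)\subseteq\mathcal{N}$. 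For $f\colon X\to Y$ and $g\colon Y\to X$, the composite $gf$ is not invertible, since otherwise $X$ would be a summand of $Y$, forcing $X\cong Y$. Thus $gf\in\mathrm{rad}$ and $\Tr(fg)=\Tr(gf)=0$.

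It follows that the images of non-zero-dimension indecomposables are simple and mutually orthogonal, and every object is a finite direct sum of them. The quotient is therefore semisimple, with objects effectively those of non-zero dimension. The obstacle I expect is getting the partial-trace argument for $\otimes$-closure of $\mathcal{N}$ right; it relies on sphericality, and there I would follow Barrett--Westbury and Etingof--Ostrik.
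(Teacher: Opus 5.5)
The paper gives no proof of this proposition; it only recalls it from Barrett--Westbury and Etingof--Ostrik. Your outline is essentially the Etingof--Ostrik argument: descent of the monoidal and rigid structure, $\mathcal{N}$ a tensor ideal via partial traces and sphericality, then Krull--Schmidt and locality of endomorphism rings. Your well-definedness checks and the partial-trace argument are fine.

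The step that needs more than you give is ``nilpotent endomorphisms have zero trace''. All three of your semisimplicity claims rest on it: $\mathrm{rad}\subseteq\mathcal{N}$, $\id_X\in\mathcal{N}$ when $\dim X=0$, and $\Hom(X,Y)\subseteq\mathcal{N}$ for $X\not\cong Y$. Yet it does not follow formally from the spherical structure. It is exactly where abelianness of $\mathcal{C}$ gets used, and in the merely Karoubian setting of Etingof--Ostrik it enters as an explicit hypothesis.

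In an abelian $\mathcal{C}$ you can supply it as follows. For nilpotent $f\in\mathrm{End}(X)$, the subobjects $K_i=\ker f^i$ form a finite filtration with $f(K_{i+1})\subseteq K_i$. So $f$ induces zero on each $K_{i+1}/K_i$, and additivity of traces along short exact sequences gives $\Tr(f)=0$.

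In the setting the paper actually uses there is a quicker argument. In $\Rep H$ for a finite-dimensional pivotal Hopf algebra, $\Tr(f)$ is the ordinary linear trace of $gf$, where $g$ is the pivotal grouplike (or its inverse, depending on conventions). Since $f$ is $H$-linear, $g$ commutes with $f$, so $gf$ is nilpotent. This matters because $\Rep_0\bsltwo$ is only Karoubian, not abelian, so the abelian argument does not apply to it directly.

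Your Krull--Schmidt step likewise needs idempotents to split, not just finite-dimensional Hom spaces. This holds when $\mathcal{C}$ is abelian, and it is precisely the Karoubian hypothesis in Etingof--Ostrik.

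Finally, for a general tensor ideal $\mathcal{I}$ the quotient $\mathcal{C}/\mathcal{I}$ is only a rigid monoidal linear category, not an abelian one. So ``tensor category'' in the first sentence of the statement has to be read in that weaker sense. For $\mathcal{N}$, your semisimplicity argument is what makes the quotient abelian again.
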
	
The category $\mathcal{C}/\mathcal{N}$ is called the \textit{semisimplification} of $\mathcal{C}$. We can now introduce a similar notion for module categories:
\begin{defin}
Let $\mathcal{C}$ be a tensor category, $\mathcal{I}$ a tensor ideal of $\mathcal{C}$, and $\mathcal{M}$ a (right) module category over $\mathcal{C}$. The (right) \textbf{module ideal} $\mathcal{J}^{\mathcal{I}}$ of $\mathcal{M}$ with respect to $\mathcal{I}$, is a collection of subspaces $\mathcal{J}^{\mathcal{I}}_{M,N}$ for objects $M,N\in\mathcal{M}$, such that the following conditions hold:
\begin{enumerate}
	\item If $\alpha\in \mathcal{J}^{\mathcal{I}}_{M,N}$ and $\beta\in\Hom(N,P)$ then $\beta\circ\alpha\in\mathcal{J}^{\mathcal{I}}_{M,P}$.
	\item If $\alpha\in\mathcal{J}^{\mathcal{I}}_{M,N}$ and $\beta\in\Hom(P,M)$ then $\alpha\circ\beta\in\mathcal{J}^{\mathcal{I}}_{P,N}$.
	\item If $\alpha\in\Hom(M,N)$ and $\beta\in\mathcal{I}_{X,Y}$, then $\beta\otimes\alpha\in\mathcal{J}^{\mathcal{I}}_{X\otimes M,Y\otimes N}$.
\end{enumerate}
For any objects $M,N,P\in\mathcal{M}$, $X,Y\in\mathcal{C}$.
\end{defin}
\begin{thm}
Let $\mathcal{C}$ be a tensor category, $\mathcal{I}$ a tensor ideal in $\mathcal{C}$, $\mathcal{M}$ a module category over $\mathcal{C}$, and $\mathcal{J}^{\mathcal{I}}$ a module ideal. Then we can construct a module category $\mathcal{M}/\mathcal{J}^{\mathcal{I}}$ over $\mathcal{C}/\mathcal{I}$ as follows: The objects of $\mathcal{M}/\mathcal{J}^{\mathcal{I}}$ are the same as $\mathcal{M}$, and the morphisms are given by
\[
\Hom_{\mathcal{M}/\mathcal{J}^{\mathcal{I}}}(M,N):=\Hom_{\mathcal{M}}(M,N)/\mathcal{J}^{\mathcal{I}}_{M,N}.\]
\end{thm}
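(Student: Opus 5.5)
The plan is to build $\mathcal{M}/\mathcal{J}^{\mathcal{I}}$ as a linear category first, then equip it with a well-defined action bifunctor of $\mathcal{C}/\mathcal{I}$. The module category axioms should then descend from $\mathcal{M}$, exactly as in the proof that $\mathcal{C}/\mathcal{I}$ is a tensor category.

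\textbf{Step 1 (the quotient category).} Define composition in $\mathcal{M}/\mathcal{J}^{\mathcal{I}}$ by $[\beta]\circ[\alpha]:=[\beta\circ\alpha]$. This is well defined by conditions 1 and 2 in the definition of a module ideal. If $\alpha-\alpha'\in\mathcal{J}^{\mathcal{I}}_{M,N}$ and $\beta-\beta'\in\mathcal{J}^{\mathcal{I}}_{N,P}$, then
\[
\beta\alpha-\beta'\alpha'=(\beta-\beta')\alpha+\beta'(\alpha-\alpha'),
\]
and both summands lie in $\mathcal{J}^{\mathcal{I}}_{M,P}$. Associativity, identities and bilinearity are inherited from $\mathcal{M}$, and the projection $\mathcal{M}\to\mathcal{M}/\mathcal{J}^{\mathcal{I}}$ is a full linear functor that is the identity on objects.

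\textbf{Step 2 (the action).} On objects set $X\otimes M$ as in $\mathcal{M}$, and on morphisms $[\beta]\otimes[\alpha]:=[\beta\otimes\alpha]$. Well-definedness is the key point. Write
\[
\beta\otimes\alpha-\beta'\otimes\alpha'=(\beta-\beta')\otimes\alpha+\beta'\otimes(\alpha-\alpha').
\]
The first summand lies in $\mathcal{J}^{\mathcal{I}}$ by condition 3. For the second, I would use the interchange law to factor it as $(\beta'\otimes\id_N)\circ(\id_X\otimes(\alpha-\alpha'))$. Then conditions 1 and 2 reduce the problem to showing that $\id_X\otimes\gamma\in\mathcal{J}^{\mathcal{I}}$ whenever $\gamma\in\mathcal{J}^{\mathcal{I}}$.

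\textbf{The main obstacle.} This closure property, that $\mathcal{J}^{\mathcal{I}}$ is stable under $\id_X\otimes-$, is not among the three listed axioms. I would handle it in one of two ways. The first is to read it as implicit in the notion of a module ideal: it is the analogue of the tensor ideal condition 3, and it holds for the motivating example $IV$, and for the negligible-type ideal one intends to use, because traces are compatible with tensoring by identities. The second is to add it explicitly as a fourth condition when stating the theorem. Given this closure property, functoriality of the action and its bilinearity follow directly from those of $\mathcal{M}$.

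\textbf{Step 3 (structure isomorphisms and axioms).} Take the associativity constraint $m_{X,Y,M}:(X\otimes Y)\otimes M\to X\otimes(Y\otimes M)$ and the unit constraint $\mathbf{1}\otimes M\to M$ to be the images of those in $\mathcal{M}$. Their inverses descend as well, so they remain isomorphisms. Naturality with respect to morphisms of $\mathcal{C}/\mathcal{I}$ and $\mathcal{M}/\mathcal{J}^{\mathcal{I}}$ follows by choosing lifts: every morphism in the quotient is the class of a morphism in $\mathcal{C}$ or $\mathcal{M}$, and the naturality squares already commute in $\mathcal{M}$, so they commute after projection. The pentagon and triangle axioms hold in $\mathcal{M}$ and are therefore preserved by the projection. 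Compatibility with the tensor structure of $\mathcal{C}/\mathcal{I}$ holds because the tensor product of $\mathcal{C}/\mathcal{I}$ is itself induced from $\mathcal{C}$. This yields the claimed $\mathcal{C}/\mathcal{I}$-module category.
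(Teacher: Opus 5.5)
Your outline follows the paper's one-line proof: conditions 1 and 2 give the linear quotient, and condition 3 handles the $\mathcal{C}/\mathcal{I}$ variable. However, the obstacle you isolate is a real gap in the paper's definition, not in your argument. The paper's proof credits all compatibility to condition 3, which only controls the $\mathcal{C}$-factor. The closure property $\gamma\in\mathcal{J}^{\mathcal{I}}_{M,N}\Rightarrow \id_X\otimes\gamma\in\mathcal{J}^{\mathcal{I}}_{X\otimes M,X\otimes N}$ cannot be derived from the three listed axioms, so your second option (adding it as a fourth condition) is the correct one.

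Here is a counterexample. Take $\mathcal{C}=\mathcal{M}=\mathrm{Vec}_{\mathbb{Z}_2}$ with simples $\mathbf{1},g$, and take $\mathcal{I}=0$, which is a tensor ideal and makes condition 3 vacuous. Let $\mathcal{J}_{M,N}$ be the morphisms factoring through a direct sum of copies of $g$. Conditions 1--3 hold and $\id_g\in\mathcal{J}_{g,g}$. But $\id_g\otimes\id_g=\id_{g\otimes g}$ with $g\otimes g\cong\mathbf{1}$, and this is not in $\mathcal{J}$. So in the quotient $g$ becomes a zero object while $g\otimes g$ does not, and $g\otimes-$ cannot descend to any additive endofunctor. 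The theorem as stated is therefore false without the extra axiom.

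Your justification for why closure holds in the intended application is off: $\mathcal{M}$ carries no trace, so compatibility of traces with tensoring says nothing about $\mathcal{J}^{\mathcal{N}}$. The correct reason concerns the module ideal the paper actually uses, namely the one generated by condition 3: the span of composites $\gamma\circ(\beta\otimes\alpha)\circ\delta$ with $\beta\in\mathcal{I}$.

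This ideal is automatically closed under $\id_Z\otimes-$. Naturality of the module associativity constraint $m$ gives
\[
\id_Z\otimes(\beta\otimes\alpha)=m\circ\bigl((\id_Z\otimes\beta)\otimes\alpha\bigr)\circ m^{-1},
\]
and $\id_Z\otimes\beta\in\mathcal{I}$ because $\mathcal{I}$ is a two-sided tensor ideal.

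So either add the fourth axiom, or restrict the theorem to this minimal $\mathcal{J}^{\mathcal{I}}$. With either fix, your Steps 2 and 3 (well-definedness via the interchange law, and descent of the structure isomorphisms, naturality, pentagon and triangle) go through as written.
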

\begin{proof}
This follows by construction. The first two conditions for a module ideal ensure that the resulting quotient will make sense as a linear category, whilst the third condition ensures compatibility with the quotient of the corresponding tensor category.
\end{proof}
\begin{defin}
If $\mathcal{C}$ is a tensor category, $\mathcal{N}$ its ideal of negligible morphisms, $\mathcal{M}$ a module ideal over $\mathcal{C}$, and $\mathcal{J}^{\mathcal{N}}$ the associated module ideal, then we call $\mathcal{M}/\mathcal{J}^{\mathcal{N}}$ the \textit{semisimplification} of $\mathcal{M}$ (with respect to $\mathcal{C}$).
\end{defin}
\begin{exmp}
Let $q=e^{\frac{2}{3}i\pi}$, and consider the wee REA $A$ given by 
\[
z_{11}=z_{12}=z_{21}=0, \qquad z_{22}^3=1. \]
This is equivalent to the exact comodule algebra $\mathscr{A}_0(3)$. It is semisimple, with three one-dimensional irreducibles, $\chi_{i}$, $i=0,1,2$, where
\[
\chi_{i}(z_{22})=q^i.\]
The coaction $A\rightarrow \bsltwo\otimes A$ gives $z_{22}\mapsto \hk^{-1}\otimes z_{22}$. Let $\mathcal{P}_3$ denote the three-dimensional projective irreducible $\bsltwo$ representation. Then
\[
\mathcal{P}_3\otimes\chi_{i}\simeq\chi_0\oplus\chi_1\oplus\chi_2\]
for each choice of $i$. We can use this to calculate the semisimplification of $\Rep A$ with respect to the ideal of negligible morphisms $\mathcal{N}$ in $\Rep_0\bsltwo$. As $\mathcal{P}_3$ is projective, $\id_{\mathcal{P}_3}$ is a negligble morphism, i.e. it is in $\mathcal{N}$. Then it follows that 
\[
\id_{\chi_0\oplus\chi_1\oplus\chi_2}\in\mathcal{J}^{\mathcal{N}}.\]
Further, we will have maps
\[
\alpha_i\in\Hom(\chi_i,\chi_0\oplus\chi_1\oplus\chi_2), \qquad \beta_i\in\Hom(\chi_0\oplus\chi_1\oplus\chi_2,\chi_i)\]
such that $\beta_i\circ\alpha_i=\id_{\chi_i}$.
Then composing, we get
\[
\beta_i\circ\id_{\chi_0\oplus\chi_1\oplus\chi_2}\circ\alpha_i=\id_{\chi_i}.\]
Hence it follows that $\id_{\chi_i}\in\mathcal{J}^{\mathcal{N}}$ for each $i$. We can then see from this that the semisimplification of $\Rep A$ with respect to $\Rep_0\bsltwo$ is trivial.
\end{exmp}
This can be seen as an example of a more general property:
\begin{prop}\label{prop: semisimplification of semisimple module categories}
Let $\mathcal{C}$ be a non-semisimple spherical tensor category over $\mc$, $\mathcal{N}$ its ideal of negligible morphisms, and $\mathcal{M}$ a semisimple module category over $\mathcal{C}$. Then the semisimplification of $\mathcal{M}$ with respect to $\mathcal{N}$ is trivial.
\end{prop}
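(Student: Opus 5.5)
The plan is to generalize the preceding example: show that the identity morphism of every object of $\mathcal{M}$ lies in $\mathcal{J}^{\mathcal{N}}$. Then every Hom space of $\mathcal{M}/\mathcal{J}^{\mathcal{N}}$ vanishes, since for $f\in\Hom(M,N)$ we have $f=f\circ\id_M\in\mathcal{J}^{\mathcal{N}}_{M,N}$ by condition (1) of a module ideal. Because $\mathcal{M}$ is semisimple, every object is a finite direct sum of simples, and $\id_{M\oplus M'}$ is the sum of the compositions through the inclusions and projections. It therefore suffices to prove $\id_S\in\mathcal{J}^{\mathcal{N}}_{S,S}$ for each simple $S\in\mathcal{M}$.

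First step: produce a nonzero negligible identity morphism in $\mathcal{C}$. Since $\mathcal{C}$ is non-semisimple over $\mc$ with enough projectives (as for $\Rep\bsl$, the case of interest), there is a nonzero projective object $P$. In a non-semisimple spherical tensor category every projective has quantum dimension zero. Moreover, $\Tr(g)=0$ for every $g\in\operatorname{End}(P)$: $P\otimes P^*$ is projective, so the coevaluation splits, and the trace factors through a projective summand of the unit, which is impossible unless the category is semisimple. Hence $\id_P\in\mathcal{N}_{P,P}$. I expect this to be the main obstacle, or at least the step where the standing hypotheses (finiteness, existence of projectives) must be made explicit. The cleanest route is to cite the standard fact (Etingof--Ostrik) that in a non-semisimple spherical finite tensor category the negligible ideal contains all morphisms factoring through projectives.

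Second step: transport this to $\mathcal{M}$. By condition (3) of a module ideal, $\id_P\otimes\id_S=\id_{P\otimes S}\in\mathcal{J}^{\mathcal{N}}_{P\otimes S,P\otimes S}$. Now show that $S$ is a direct summand of $P\otimes S$ for a suitable projective $P$. Take $P=P(\mathbf 1)$, the projective cover of the unit, with its surjection $P(\mathbf 1)\to\mathbf 1$. Tensoring gives a surjection $P(\mathbf 1)\otimes S\to S$, because the action is exact in the first variable. Since $\mathcal{M}$ is semisimple, this epimorphism splits, giving $\alpha:S\to P\otimes S$ and $\beta:P\otimes S\to S$ with $\beta\circ\alpha=\id_S$. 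Applying conditions (1) and (2) of a module ideal to $\beta\circ\id_{P\otimes S}\circ\alpha$ then yields $\id_S\in\mathcal{J}^{\mathcal{N}}_{S,S}$.

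Combining the two steps, every identity morphism, and hence every morphism, of $\mathcal{M}$ lies in $\mathcal{J}^{\mathcal{N}}$, so $\mathcal{M}/\mathcal{J}^{\mathcal{N}}$ is trivial (every object is isomorphic to zero). The only inputs are exactness of the module action, used to keep the surjection after tensoring, and the negligibility of $\id_P$ from the first step.
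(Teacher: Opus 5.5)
Your proposal is correct and follows essentially the same argument as the paper. Both produce a negligible morphism in $\mathcal{C}$ through which a split epimorphism $P\otimes M\to M$ factors: you use $\id_{P(\mathbf{1})}$ with the cover $P(\mathbf{1})\to\mathbf{1}$, while the paper uses the evaluation $P^*\otimes P\to\mathbf{1}$ for an arbitrary nonzero projective $P$. Both then push it into $\mathcal{J}^{\mathcal{N}}$ via condition (3) and recover $\id_M$ by composing with a section.

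Your version is slightly more careful than the paper's in two respects. First, you use that the map is an epimorphism; the paper only says it is non-zero, which suffices only for simple $M$. Second, you sketch why $\Tr$ vanishes on $\operatorname{End}(P)$, which the paper uses without comment. That step should be phrased as a contradiction: if $\Tr(g)\neq 0$, then the coevaluation splits and $\mathbf{1}$ is a summand of the projective $P\otimes P^*$.
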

\begin{proof}
Let $M$ be an object in $\mathcal{M}$, and $P$ a projective object in $\mathcal{C}$. Let $\cup\in\Hom(P^*\otimes P,\mc)$. In particular, we will have $\cup\in\mathcal{N}$. Then 
\[
\cup\otimes\id_M\in \Hom(P^*\otimes P\otimes M,M),\]
will be a non-zero map by rigidity, and by construction $\cup\otimes\id_M\in\mathcal{J}^{\mathcal{N}}$. As $\mathcal{M}$ is semisimple, $M$ will therefore be a direct summand of $P^*\otimes P\otimes M$. Hence we can find a non-zero map $f_M\in\Hom(M,P^*\otimes P\otimes M)$ that we can compose to obtain 
\[
(\cup\otimes\id_M)\circ f_M=\id_M\in\mathcal{J}^{\mathcal{N}}.\] 
As this works for any object in $\mathcal{M}$, we see that for all objects $M$ of $\mathcal{M}$, we will have $\id_M\in\mathcal{J}^{\mathcal{N}}$.
\end{proof}
It follows from this proposition that to produce non-trivial examples of semisimplified module categories, then we need to focus on \textit{non-semisimple} module categories.

\subsection{Module categories over $\Rep \usltwo$ fusion categories via semisimplification.}

Our goal now is to show that module categories over $\Rep \usltwo$ fusion categories can be produced via semisimplification of representation categories of $\bsltwo$-comodule algebras. Our focus will be on explicitly constructing the $D_n$ and $T_n$ module categories by semisimplification. We note that the $E_n$ module categories appear to be more difficult to construct, and so we leave for future work.

We first recall the classification of indecomposable module categories over $\Rep \usltwo$ fusion categories, following \cite{EtingofOstrik2} for example. The module categories are typically described in terms of graphs. Let $V$ be the two-dimensional irreducible $\usltwo$ representation. Then to associate a graph to a module category, the vertices of the graph will be labelled by simple objects of the module category. The edges of the graph will encode the fusion rules with respect to $V$. There will be an edge from vertex $M_1$ to vertex $M_2$ if $M_2$ appears in the decomposition of $M_1\otimes V$. Every fusion category is a module category over itself, and in the case of $\Rep \usltwo$ this encodes as the graph $A_{\tp-1}$, where the subscript denotes the number of vertices. Apart from this case, the non-trivial indecomposable module categories are as follows:
\begin{itemize}
\item If $p$ is an odd root of unity, the only non-trivial module category is $T_{\frac{p-1}{2}}$.
\item If $p$ is an even root of unity, and $\tp$ even, there is a non-trivial module category $D_{\frac{\tp}{2}+1}$.
\item If $\tp=12,18,$ or $30$, there is another module category given by the graphs $E_6, E_7$, and $E_8$ respectively.
\end{itemize}
\begin{figure}[H]
	\centering
	\includegraphics[width=0.7\linewidth]{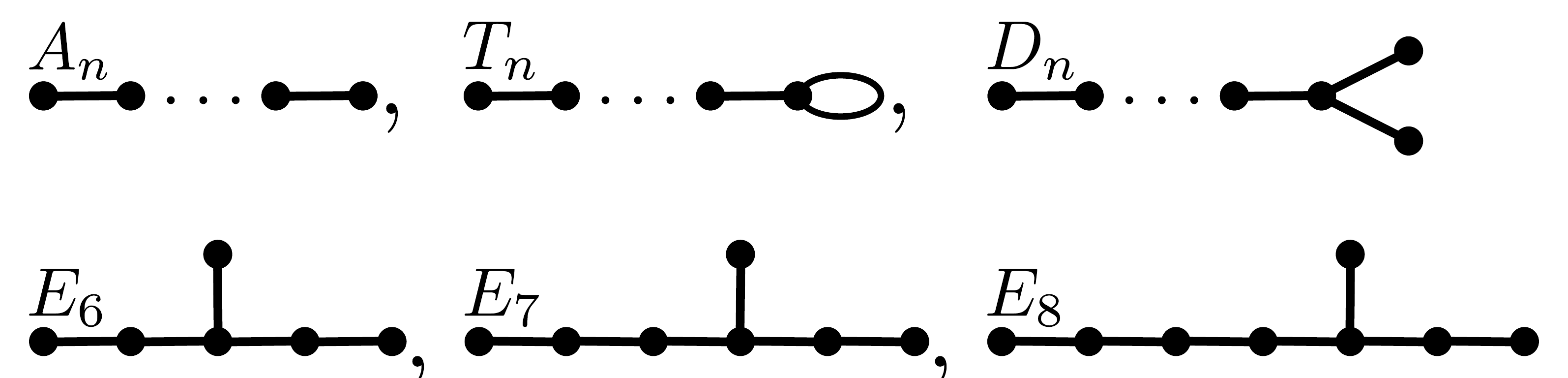}
	\caption{The fusion graphs associated to module categories over $\Rep\usltwo$.}
\end{figure} 
We will first focus on the odd root of unity case, and will show that the $T_n$ module categories can be produced by semisimplification of the exact comodule algebra $\mathscr{A}_{4}^{\chi}(\chi_{12};\chi_{21};w)$. We will then construct the $D_n$ module categories for even roots of unity by considering algebras generalizing $\mathscr{A}_{4}^{\chi}(\chi_{12};\chi_{21};w)$.

Before proceeding to our construction of the module categories, we will first need to recall the structure and fusion rules of the $\bsltwo$ irreducible and projective indecomposable representations:

\begin{prop}
For $q$ an odd root of unity, we denote $\kappa=+$, while for $q$ an even root of unity we denote $\kappa=\pm$. Then the irreducible representations of $\bsltwo$ are $\mathcal{V}^{\kappa}_{n}$ for $1\leq n\leq \tp$. $\mathcal{V}_{n}^{\kappa}$ has basis $\{v_{i}:0\leq i\leq n-1\}$ and action
\[
\hk v_{i} = \kappa q^{n-1-2i}v_{i}, \qquad Ev_{i}=\kappa [i][n-i]v_{i-1}, \qquad Fv_{i} = v_{i+1}.\]
We note that $\mathcal{V}_{\tp}^{\kappa}$ is a projective irreducible representation. The projective indecomposable representations of $\bsltwo$ are $\mathcal{P}_{n}^{\kappa}$ for $1\leq n\leq \tp-1$. $\mathcal{P}_{n}^{\kappa}$ is $2\tp$-dimensional. It has basis 
\[
\{a_i,b_i:0\leq i\leq n-1\}\cup\{x_j,y_j:0\leq j\leq \tp-n-1\}\] 
and action
\begin{gather*}
\hk a_i = \kappa q^{n-1-2i}a_i, \quad \hk b_i = \kappa q^{n-1-2i}b_i, \quad \hk x_j = \kappa q^{p-n-1-2j}x_j, \quad \hk y_j = \kappa q^{p-n-1-2j}y_J, \\
Ea_i = \kappa [i][n-i]a_{i-1}, \quad Eb_i = a_{i-1}+\kappa [i][n-i]b_{i-1}, \quad Eb_0 = x_{\tp-n-1}, \quad Ex_j = \kappa [j][p-n-j]x_{j-1}, \\
Ey_j = \kappa [j][p-n-j]y_{j-1}, \quad Ey_0 = a_{n-1}, \quad Fa_i = a_{i+1}, \\
Fb_i = b_{i+1}, \quad Fb_{n-1} = y_0, \quad Fx_j = x_{j+1}, \quad Fx_{\tp-n-1} = a_0, \quad Fy_j = y_{j+1},
\end{gather*}
where we take $x_{-1}=a_{-1}=a_{n}=y_{\tp-n}=0$. The tensor product decomposition with respect to any combination of irreducible or projective indecomposable representations can be obtained from the following decomposition rules:
\begin{gather*}
	\mathcal{V}^{\kappa_1}_1\otimes\mathcal{V}_{i}^{\kappa_2}\simeq\mathcal{V}_{i}^{\kappa_2}\otimes\mathcal{V}_{1}^{\kappa_1}\simeq \mathcal{V}_i^{\kappa_1 \kappa_2}, \qquad \mathcal{V}_1^{\kappa_1}\otimes\mathcal{P}_i^{\kappa_2}\simeq\mathcal{P}_{i}^{\kappa_2}\otimes\mathcal{V}_{1}^{\kappa_1}\simeq\mathcal{P}_i^{\kappa_1 \kappa_2}, \\
\mathcal{V}_2^{+}\otimes \mathcal{V}_i^{\kappa} \simeq \mathcal{V}_{i}^{\kappa}\otimes\mathcal{V}_2^{+}\simeq \mathcal{V}_{i-1}^{\kappa}\oplus\mathcal{V}_{i+1}^{\kappa}, ~ 2\leq i\leq \tp-1 \\
\mathcal{V}_2^{+}\otimes\mathcal{V}_{\tp}^{\kappa}\simeq\mathcal{V}_{\tp}^{\kappa}\otimes\mathcal{V}_2^{+}\simeq \mathcal{P}_{\tp-1}^{\kappa}, \\
\mathcal{V}_2^{+}\otimes\mathcal{P}_{\tp-1}^{\kappa}\simeq\mathcal{P}_{\tp-1}^{\kappa}\otimes\mathcal{V}_2^{+} \simeq \mathcal{P}_{\tp-2}^{\kappa}\oplus 2\mathcal{V}_{\tp}^{\kappa}, \\
\mathcal{V}_2^{+}\otimes\mathcal{P}^{\kappa}_{i}\simeq\mathcal{P}_{i}^{\kappa}\otimes\mathcal{V}_2^{+}\simeq \mathcal{P}_{i-1}^{\kappa}\oplus\mathcal{P}_{i+1}^{\kappa} ~ 2\leq i\leq \tp-2, \\
\mathcal{V}_2^{+}\otimes \mathcal{P}_1^{\kappa}\simeq\mathcal{P}_{1}^{\kappa}\otimes\mathcal{V}_2^{+}\simeq \mathcal{P}_2^{\kappa}\oplus 2\mathcal{V}_{\tp}^{\kappa\upsilon},
\end{gather*}
where $\upsilon = +$ for odd roots of unity, and $\upsilon = -$ for even roots of unity.
\end{prop}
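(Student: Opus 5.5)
The plan is to establish the three groups of claims in turn. First, the explicit description of the irreducibles $\mathcal{V}_n^{\kappa}$ and of the indecomposable projectives $\mathcal{P}_n^{\kappa}$. Then their projectivity and completeness. Finally, the fusion rules. The cleanest route is to check everything directly against the defining relations of $\bsltwo$:
\[
\hk E=q^2E\hk,\qquad \hk F=q^{-2}F\hk,\qquad [E,F]=\frac{\hk-\hk^{-1}}{q-q^{-1}},\qquad E^{\tp}=F^{\tp}=0,\qquad \hk^{p}=1.
\]
This is the standard restricted quantum group picture from \cite{FGST, KondoSaito}, which we follow.

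\textbf{Step 1 (irreducibles and projective covers).} For $\mathcal{V}_n^{\kappa}$, checking the relations is a one-line computation. The key identity is
\[
[i+1][n-i-1]-[i][n-i]=[n-1-2i],
\]
together with $\kappa^2=1$; also $F^{\tp}=0$ holds because $n\le\tp$. Irreducibility follows since $v_0$ is, up to scalar, the unique vector killed by $E$ (as $[i][n-i]\neq0$ for $0<i<n\le\tp$), and $F$ generates everything from $v_0$. Completeness of the list comes from a highest-weight argument. In any simple module, $\ker E$ is nonzero because $E$ is nilpotent, and $\hk$-stable, so it contains a highest weight vector of weight $\lambda$ with $\lambda^p=1$. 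The relation $E F^{n}v=[n]\frac{q^{1-n}\lambda-q^{n-1}\lambda^{-1}}{q-q^{-1}}F^{n-1}v$ then forces $\lambda=\kappa q^{n-1}$ with $n\le\tp$. For even roots, $\kappa=-1$ is allowed because $q^{\tp}=-1$ absorbs the sign.

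For $\mathcal{P}_n^{\kappa}$, the relations are again verified on the explicit basis. The main points are the gluing arrows $Eb_0=x_{\tp-n-1}$, $Fb_{n-1}=y_0$, $Ey_0=a_{n-1}$ and $Fx_{\tp-n-1}=a_0$, where one checks $[E,F]b_i$ including the extra $a$ terms. Projectivity and indecomposability follow from the structure: $\mathcal{P}_n^\kappa$ has simple top $\mathcal{V}_n^\kappa$ (spanned by the images of the $b_i$) and simple socle $\mathcal{V}_n^\kappa$ (spanned by the $a_i$), with middle layer two copies of $\mathcal{V}_{\tp-n}^{\kappa'}$. To finish, compare dimensions. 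The sum
\[
\sum_{\kappa}\Big(\sum_{n<\tp} n\cdot 2\tp+\tp\cdot\tp\Big)
\]
equals $\dim\bsltwo$, so these exhaust the projective covers. Projectivity of $\mathcal{V}_{\tp}^\kappa$ is the Steinberg-type fact obtained from the same count. Alternatively, one can show $\mathcal{V}_{\tp}^\kappa\otimes -$ is exact and preserves projectives.

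\textbf{Step 2 (fusion rules).} The rules involving $\mathcal{V}_1^{\kappa}$ are immediate, since tensoring with a one-dimensional representation twists the character of $\hk$ by $\kappa$. For $\mathcal{V}_2^+\otimes\mathcal{V}_i^\kappa$ with $2\le i\le\tp-1$, we compute the $\hk$-weights, exhibit highest weight vectors of weights $\kappa q^{i}$ and $\kappa q^{i-2}$, and note that these weights are distinct and lie in different blocks, so the sum splits.

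The remaining rules are where the even/odd sign $\upsilon$ enters:
\[
\mathcal{V}_2^+\otimes\mathcal{V}_{\tp}^\kappa\simeq\mathcal{P}_{\tp-1}^\kappa,\qquad \mathcal{V}_2^+\otimes\mathcal{P}_i^\kappa,\qquad \mathcal{V}_2^+\otimes\mathcal{P}_1^\kappa\ni 2\mathcal{V}_{\tp}^{\kappa\upsilon}.
\]
The argument for these is that a tensor product with a projective is projective, hence a direct sum of the $\mathcal{P}$'s and $\mathcal{V}_\tp$'s. The multiplicities are then determined by the composition factors, which come from the Grothendieck-ring computation already done for the simples.

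The main obstacle will be tracking the sign $\upsilon$ in this bookkeeping. Weight $q^{\pm\tp}$ equals $-1$ at even roots, so the factor $\mathcal{V}_{\tp}$ arising from $\mathcal{V}_2\otimes\mathcal{V}_{\tp-1}$ at the "bottom edge" of $\mathcal{P}_1$ has flipped sign. I would verify this by computing the highest weight of the relevant summand directly.
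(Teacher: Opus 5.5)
\textbf{Overall.} The paper does not prove this proposition; it only cites \cite{KondoSaito, Suter, Xiao}. Your direct verification is therefore a self-contained alternative route.

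\textbf{Step 1 is sound.} The relation checks on $\mathcal{V}_n^\kappa$ and $\mathcal{P}_n^\kappa$ go through. Comparing $\sum_L \dim L\cdot\dim P(L)=\dim\bsltwo$ with your modules, each of which has simple top, does force $\mathcal{P}_n^\kappa\cong P(\mathcal{V}_n^\kappa)$ and $P(\mathcal{V}_{\tp}^\kappa)=\mathcal{V}_{\tp}^\kappa$.

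\textbf{The gap is in Step 2.} It sits in the sentence ``the multiplicities are then determined by the composition factors''. This is false for $\bsltwo$, because its Cartan matrix is degenerate. The projective $\mathcal{P}_n^\kappa$ and its block partner $\mathcal{P}_{\tp-n}^{-\kappa}$ (signs dropped when $q$ is odd) both have composition factors $2[\mathcal{V}_n^\kappa]+2[\mathcal{V}_{\tp-n}^{-\kappa}]$, yet they have different heads and are not isomorphic. Two consequences:
\begin{itemize}
\item Knowing that $\mathcal{V}_2^+\otimes\mathcal{V}_{\tp}^\kappa$ is projective, and knowing its class, cannot distinguish $\mathcal{P}_{\tp-1}^\kappa$ from $\mathcal{P}_1^{-\kappa}$.
\item $\mathcal{V}_2^+\otimes\mathcal{P}_i^\kappa$ has the same class as $\mathcal{P}_{\tp-i+1}^{-\kappa}\oplus\mathcal{P}_{\tp-i-1}^{-\kappa}$, and also as $\mathcal{P}_{i-1}^{\kappa}\oplus\mathcal{P}_{\tp-i-1}^{-\kappa}$.
\end{itemize}

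You have also placed the difficulty in the wrong spot. The sign $\upsilon$ is forced by composition factors alone. No $\mathcal{P}_m$ has $\mathcal{V}_{\tp}^{\pm}$ as a composition factor, so the summand multiplicity of $\mathcal{V}_{\tp}^{\pm}$ in a projective equals its composition multiplicity. The real ambiguity is between block partners.

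\textbf{The missing ingredient is to count \emph{heads} rather than composition factors.}
\begin{itemize}
\item For a projective $Q$, the multiplicity of $P(L)$ as a summand is $\dim\Hom(Q,L)$.
\item By rigidity, $\Hom(\mathcal{V}_2^+\otimes Q,L)\cong\Hom(Q,(\mathcal{V}_2^+)^*\otimes L)$, and $(\mathcal{V}_2^+)^*\cong\mathcal{V}_2^+$.
\item $\dim\Hom(P(L'),X)=[X:L']$.
\end{itemize}
Hence the multiplicity of $\mathcal{P}_m^{\kappa'}$ in $\mathcal{V}_2^+\otimes\mathcal{P}_i^\kappa$ is $[\mathcal{V}_2^+\otimes\mathcal{V}_m^{\kappa'}:\mathcal{V}_i^\kappa]$, which your simple fusion rules do compute.

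For $\mathcal{V}_2^+\otimes\mathcal{V}_{\tp}^\kappa$, the same count yields one copy of $\mathcal{P}_{\tp-1}^\kappa$. This already exhausts the dimension $2\tp$. Then $[\mathcal{P}_{\tp-1}^{\kappa'}:\mathcal{V}_i^\kappa]$ gives the $2\mathcal{V}_{\tp}$ terms, including the sign $\upsilon$.

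\textbf{Three smaller points.}
\begin{itemize}
\item In your completeness argument, the displayed relation says nothing when $n=\tp$, since $[\tp]=0$. There you must use simplicity, namely the absence of singular vectors $F^m v$ with $0<m<\tp$, to conclude $\lambda=\kappa q^{\tp-1}$.
\item The ``alternative'' via exactness of $\mathcal{V}_{\tp}^\kappa\otimes-$ proves nothing, since every $V\otimes-$ is exact and preserves projectives.
\item You never treat the reversed products $X\otimes\mathcal{V}_2^+$. These are not automatic without a braiding: at even roots the $R$-matrix of Remark~\ref{remark: quasitriangularity of Uq(sl2)} lives on $\bslatwo$, not $\bsltwo$. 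Run the same head-counting argument with the other-sided dual.
\end{itemize}
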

\begin{proof}
This is well known, see for example \cite{KondoSaito, Suter, Xiao}.
\end{proof}
Consider the tensor category $\Rep_0\bsltwo$ tensor generated by $\bsltwo$ irreducible and projective indecomposable representations. From the above fusion rules, we see it is closed under tensor products, i.e. it will only contain irreducible or projective indecomposable representations as objects. Taking its semisimplification, the resulting fusion category will contain simple objects $\mathcal{V}_i$, $1\leq i\leq p-1$ in the odd root of unity case. We denote this fusion category by $\Rep\usltwo$. In the even root of unity case, the semisimplification will instead be a $\mz_2$ graded version with simple objects $\mathcal{V}_i^{\pm}$, $1\leq i\leq \tp-1$. In this case, the $+$ graded part will form a fusion sub-category. We again denote this fusion sub-category by $\Rep\usltwo$. In what follows, when we consider semisimplification of a module category, it is with respect to the ideal of negligible morphisms in $\Rep_0\bsltwo$.

\subsection{The $T_n$ module categories}

As noted in \cite{NSS}, for certain choices of $\alpha,\beta,\xi$, the algebra $\mathscr{A}_4(\alpha;\beta;\xi)=\mathscr{A}_4^{\chi}(\chi_{12};\chi_{21};w)$ will become non-semisimple. This is the case we now want to consider, and will be the starting point for constructing the various module categories. In terms of Equation \ref{eqn: W relation}, the non-semisimplicity can be seen to occur as a consequence of repeated eigenvalues for characters, i.e. finding for some $0\leq i\neq j<p$ that
\[
q^iu+q^{-i}v = q^ju+q^{-j}v.\]
This can be seen to occur when
\[
v=q^{i+j}u.\]
We then see from this that we can obtain a non-semisimple algebra by choosing $u$ and $v$ accordingly. The case we want to consider is when $u=v=\frac{w}{2}$, which occurs for
\[
\mathscr{A}_{4}^{\chi}(\chi_{12};-\frac{w^2}{4\chi_{12}};w).\]
For this specialization of the algebra, the possible eigenvalues become
\[
(q^i+q^{-i})\frac{w}{2}, \quad 0\leq i\leq \tp.\]
We can summarize the properties of this algebra as follows:
\begin{prop}\label{prop: Summary A4 algebra properties}
For $q$ a root of unity we denote $\mu = 0$ for $q$ odd and $\mu \in\{0,\tp\}$ for $q$ even. Fix $\chi_{12},w\in\mc\setminus\{0\}$, then the algebra $\mathscr{A}_{4}^{\chi}(\chi_{12};-\frac{w^2}{4\chi_{12}};w)$ is a commutative, non-semisimple, and $p$-dimensional quotient of $\ohtwo$ defined by
\[
z_{11}\mapsto 0, \quad z_{12}\mapsto\chi_{12}, \quad z_{21}\mapsto\frac{-w^2}{4\chi_{12}},\] with $z_{22}$ satisfying 
\[
\prod\limits_{i=0}^{p-1}\left(z_{22}-\frac{w}{2}(q^i+q^{-i})\right)=0.\] 
It has projective irreducible representations $\mathscr{I}_{w,\mu}$ given by
\[
\mathscr{I}_{w,\mu}(Z) = \left(\begin{array}{cc} 0 & \chi_{12} \\ -\frac{w^2}{4\chi_{12}} & q^{\mu} w \end{array}\right).\]
It has another $\lfloor\frac{p-1}{2}\rfloor$ projective indecomposable representations, $\mathscr{P}_{w,i}$, $1\leq i\leq \lfloor\frac{p-1}{2}\rfloor$, given by
\begin{gather*}
\mathscr{P}_{w,i}(z_{11}) = \left(\begin{array}{cc} 0 & 0 \\ 0 & 0 \end{array}\right), \quad \mathscr{P}_{w,i}(z_{12}) = \left(\begin{array}{cc} \chi_{12} & 0 \\ 0 & \chi_{12} \end{array}\right), \\
\mathscr{P}_{w,i}(z_{21}) = \left(\begin{array}{cc} \frac{-w^2}{4\chi_{12}} & 0 \\ 0 & \frac{-w^2}{4\chi_{12}} \end{array}\right), \quad \mathscr{P}_{w,i}(z_{22}) = \left(\begin{array}{cc} (q^i+q^{-i})\frac{w}{2} & 0 \\ 1 & (q^i+q^{-i})\frac{w}{2} \end{array}\right).
\end{gather*}
We denote the irreducible quotients of $\mathscr{P}_{w,i}$ by $\mathscr{I}_{w,i}$. The $\bsltwo$ coaction is given by
\[
\tilde{\beta}(z_{22})=(q^{-1}-q)F\otimes\chi_{12}-(1-q^2)\hk^{-1}E\otimes\frac{w^2}{4\chi_{12}}+\hk^{-1}\otimes z_{22}.\] 
The tensor product decompositions with respect to the $2$-dimensional $\bsltwo$ irreducible $\mathcal{V}_2^{+}$ are as follows:
\begin{gather*}
\mathcal{V}_2^{+}\otimes\mathscr{I}_{w,i}\simeq \mathscr{I}_{w,i+1}\oplus\mathscr{I}_{w,i-1} \quad i\neq \mu \\
\mathcal{V}_2^{+}\otimes\mathscr{I}_{w,0}\simeq \mathscr{P}_{w,1} \\
\mathcal{V}_2^{+}\otimes\mathscr{I}_{w,\tp}\simeq \mathscr{P}_{w,\lfloor\frac{p-1}{2}\rfloor}, \quad \text{ for $q$ even}, \\
\mathcal{V}_2^{+}\otimes\mathscr{P}_{w,1} \simeq \mathscr{P}_{w,2}\oplus 2\mathscr{I}_{w,0} \\
\mathcal{V}_2^{+}\otimes\mathscr{P}_{w,1}\simeq 2\mathscr{I}_{w,0}\oplus 2\mathscr{I}_{w,2}, \quad \text{ for $q$ even and $\tp=2$}, \\
\mathcal{V}_2^{+}\otimes\mathscr{P}_{w,i}\simeq \mathscr{P}_{w,i+1}\oplus \mathscr{P}_{w,i-1}, \quad 1<i<\lfloor\frac{p-1}{2}\rfloor \\
\mathcal{V}_2^{+}\otimes\mathscr{P}_{w,\lfloor\frac{p-1}{2}\rfloor} \simeq \mathscr{P}_{w,\lfloor\frac{p-1}{2}\rfloor-1}\oplus \mathscr{P}_{w,\lfloor\frac{p-1}{2}\rfloor}, \quad \text{ for $q$ odd},  \\
\mathcal{V}_2^{+}\otimes\mathscr{P}_{w,\lfloor\frac{p-1}{2}\rfloor} \simeq \mathscr{P}_{w,\lfloor\frac{p-1}{2}\rfloor-1}\oplus 2\mathscr{V}_{w,\tp},	\quad \text{ for $q$ even},
\end{gather*}
where we write $\mathscr{P}_{w,i}=\mathscr{P}_{w,p-i}$, $\mathscr{I}_{w,i}=\mathscr{I}_{w,p-i}$ if $i>\lfloor\frac{p-1}{2}\rfloor$ and $i\neq \mu$.
\end{prop}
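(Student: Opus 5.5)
The plan is to treat $A:=\mathscr{A}_{4}^{\chi}(\chi_{12};-\frac{w^2}{4\chi_{12}};w)$ concretely. Since $z_{11},z_{12},z_{21}$ are sent to scalars, $A$ is generated by the single element $z_{22}$. It is therefore commutative and isomorphic to $\mc[z_{22}]/(\phi(z_{22}))$, where
\[
\phi(x)=\prod_{i=0}^{p-1}\Bigl(x-\tfrac{w}{2}(q^i+q^{-i})\Bigr).
\]
This gives $\dim A=p$ immediately. The first step is to check that this assignment is compatible with the $\ohtwo$ relations. With $z_{11}=0$, the relations reduce to $z_{12}z_{21}-z_{21}z_{12}=0$ and commutation of $z_{12},z_{21}$ with $z_{22}$, all of which hold trivially. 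One should also check that the quotient ideal is stable under $\tilde\beta$. For that, substitute the scalar values into Equation \ref{eqn: modified bsltwo coaction}, using $\varepsilon$-type identities: the scalar $z_{12}\mapsto\chi_{12}$ must satisfy $\tilde\beta(z_{12})=1\otimes\chi_{12}$, which is consistent because $z_{11}=0$. This produces the stated formula for $\tilde\beta(z_{22})$. Stability of the relation $\phi(z_{22})=0$ is exactly the content of the characterisation of \cite{NSS}[Equation 5.22] recalled above, specialised at $u=v=w/2$.

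Next comes the representation theory, which is that of the truncated polynomial algebra. The roots $\frac{w}{2}(q^i+q^{-i})$ satisfy $i\sim p-i$. The exponent $i=0$ is always a simple root, and for $q$ even and $\mu=\tp$ the root $q^{\tp}w=-w$ is also simple. Every other root appears with multiplicity exactly two. By the Chinese remainder theorem, $A\simeq\prod_{\text{simple}}\mc\times\prod_{\text{double}}\mc[x]/(x^2)$. This gives the one-dimensional projective irreducibles $\mathscr{I}_{w,\mu}$, the two-dimensional projective indecomposables $\mathscr{P}_{w,i}$ in Jordan form with $1\leq i\leq\lfloor\frac{p-1}{2}\rfloor$, and non-semisimplicity whenever such an $i$ exists.

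The main obstacle is the fusion rules with $\mathcal{V}_2^{+}$. I would compute them directly. Take a module $M$ and form $\mathcal{V}_2^{+}\otimes M$ with $z_{22}$ acting via $\tilde\beta(z_{22})$, which is a $2\dim M\times 2\dim M$ matrix built from the given $E,F,\hk$ actions on $\mathcal{V}_2^{+}$.
\begin{itemize}
\item For $M=\mathscr{I}_{w,i}$, this is a $2\times2$ matrix. By \eqref{eqn: W char tensor product decomposition} with $u=v$, its eigenvalues are $\frac w2(q^{i+1}+q^{-i-1})$ and $\frac w2(q^{i-1}+q^{-i+1})$.
\item These eigenvalues are distinct unless $q^{i+1}+q^{-i-1}=q^{i-1}+q^{1-i}$, which happens precisely when $i\equiv0$ or $i\equiv\tp$ in the even case, i.e. $i=\mu$. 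In the distinct case the module splits as a direct sum.
\item In the case $i=\mu$, I would check that the off-diagonal entry, which is proportional to $\chi_{12}$ or $w^2/\chi_{12}$ and hence nonzero, makes the matrix a nontrivial Jordan block. This yields $\mathscr{P}_{w,1}$, or $\mathscr{P}_{w,\lfloor (p-1)/2\rfloor}$ for $i=\tp$.
\end{itemize}
For $M=\mathscr{P}_{w,i}$, tensoring with $\mathcal{V}_2^{+}$ is exact. The composition factors are therefore those of two copies of $\mathcal{V}_2^{+}\otimes\mathscr{I}_{w,i}$, and exactness of module categories (projectives go to projectives, as $A$ is an exact comodule algebra) forces the result to be a sum of projectives with those composition factors. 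This gives the listed decompositions, including the boundary cases $i=1$, $i=\lfloor\frac{p-1}{2}\rfloor$ and $\tp=2$ via the identification $\mathscr{P}_{w,i}=\mathscr{P}_{w,p-i}$.
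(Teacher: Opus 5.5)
Your proposal is correct and follows essentially the paper's route: you identify the algebra with $\mathbb{C}[z_{22}]/(\phi(z_{22}))$, read off the irreducibles and projective covers from the root multiplicities, specialise the generic decomposition \eqref{eqn: W char tensor product decomposition} to $u=v=\frac{w}{2}$ (your Jordan-block check at $i=\mu$ makes explicit what the paper leaves implicit), and settle the $\mathscr{P}_{w,i}$ cases using preservation of projectives together with composition factors. One caveat, which the paper shares: \cite{NSS} only treats odd $q$, so for even $q$ the $\tilde\beta$-stability of the ideal $(\phi(z_{22}))$ is assumed rather than proved, whereas the projectivity step needs no exactness at all, since the functor $X\otimes(-)$ has an exact right adjoint given by tensoring with a dual of $X$.
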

\begin{proof}
That the algebra is commutative, non-semisimple, and $p$-dimensional just follows from its construction. The irreducibles and projective indecomposables can be obtained by considering relation \ref{eqn: W relation}. For the tensor product decomposition, we can use the generic decomposition in Equation \ref{eqn: W char tensor product decomposition}, and then specialize for the non-semisimple case. We note that the decompositions involving projective representations must follow as described due to $\mathscr{A}_{4}^{\chi}$ being an exact comodule algebra: We can choose an irreducible $\mathscr{I}_{w,i}$ and $\bsltwo$ projective representation $P$ such that $\mathscr{I}_{w,\mu}$ appears as in the decomposition of $P\otimes\mathscr{I}_{w,i}$. Then as $\mathcal{V}_{2}^{+}\otimes P$ is always projective, we see (by exactness) that $(\mathcal{V}_{2}^{+})^{\otimes k}\otimes \mathscr{I}_{w,\mu}$ can only contain projective representations in its decomposition.
\end{proof}
In fact, from the above proposition we see that every $\mathscr{A}_{4}^{\chi}(\chi_{12};\frac{-w^2}{4\chi_{12}};w)$ indecomposable representation must be one of the irreducible or projective representations listed above. From the tensor product decompositions in the proposition, we can now compute the semisimplification of $\Rep\mathscr{A}_{4}^{\chi}(\chi_{12};\frac{-w^2}{4\chi_{12}};w)$:
\begin{prop}
For $q$ an odd root of unity, the semisimplification of $\Rep\mathscr{A}_{4}^{\chi}(\chi_{12};\frac{-w^2}{4\chi_{12}};w)$ is a non-trivial module category over the $\Rep\usltwo$ fusion category. It has $\frac{p-1}{2}$ simple objects, and its fusion graph is $T_{\frac{p-1}{2}}$.
\end{prop}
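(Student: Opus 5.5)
Since $q$ is odd we have $\mu=0$ and $\lfloor\frac{p-1}{2}\rfloor=\frac{p-1}{2}$. By Proposition \ref{prop: Summary A4 algebra properties}, every indecomposable object of $\mathcal{M}:=\Rep\mathscr{A}_{4}^{\chi}(\chi_{12};\frac{-w^2}{4\chi_{12}};w)$ is one of $\mathscr{I}_{w,0}$, the projective covers $\mathscr{P}_{w,i}$, or their tops $\mathscr{I}_{w,i}$, for $1\leq i\leq\frac{p-1}{2}$. The plan is to find out which of these become zero in $\mathcal{M}/\mathcal{J}^{\mathcal{N}}$ and which survive as simple objects. Then we read off the fusion graph from the decompositions of $\mathcal{V}_2^{+}\otimes(-)$ listed in the same proposition.

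\textbf{Killing the projectives.} First, $\mathscr{I}_{w,0}$ and every $\mathscr{P}_{w,i}$ have identity morphisms in $\mathcal{J}^{\mathcal{N}}$, by the argument in the example preceding Proposition \ref{prop: semisimplification of semisimple module categories}. Take the projective irreducible $\mathcal{V}_{\tp}^{+}$ of $\bsltwo$. Its identity is negligible, since its quantum dimension $[\tp]$ vanishes and $\Hom(\mathcal{V}_{\tp}^{+},\mathcal{V}_{\tp}^{+})=\mc$. Hence $\id_{\mathcal{V}_{\tp}^{+}\otimes M}\in\mathcal{J}^{\mathcal{N}}$ for every $M$. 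By exactness of $\mathscr{A}_4^{\chi}$ and iterating the fusion rules from $\mathscr{I}_{w,0}$, each $\mathscr{P}_{w,i}$ and $\mathscr{I}_{w,0}$ occurs as a direct summand of some $\mathcal{V}_{\tp}^{+}\otimes M$, for instance of $\mathcal{V}_{\tp}^{+}\otimes\mathscr{I}_{w,0}$ or after tensoring with further $\mathcal{V}_2^{+}$. Composing with the inclusion and projection of that summand, exactly as in the $\alpha_i,\beta_i$ argument, puts their identities in $\mathcal{J}^{\mathcal{N}}$. So these objects are zero in the quotient.

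\textbf{Survival of the $\mathscr{I}_{w,i}$.} Next we must show that $\id_{\mathscr{I}_{w,i}}\notin\mathcal{J}^{\mathcal{N}}$ for $1\leq i\leq\frac{p-1}{2}$. This is the main obstacle, since the module ideal is generated rather than defined by a trace condition. I would construct a candidate ideal $\mathcal{J}'$ consisting of all morphisms that factor through a projective $\mathscr{A}_4^{\chi}$-module, and check the three module-ideal axioms. The first two axioms are clear. For the third, if $\beta\in\mathcal{N}_{X,Y}$ then $\beta$ factors through a projective $\bsltwo$-module; this holds in $\Rep_0\bsltwo$, whose negligible morphisms between irreducibles and projectives are exactly those factoring through projectives, using the explicit structure of $\mathcal{P}_n^{\kappa}$. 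Then $\beta\otimes\alpha$ factors through (projective)$\otimes M$, which is projective by exactness. It follows that $\mathcal{J}^{\mathcal{N}}\subseteq\mathcal{J}'$. Finally, $\id_{\mathscr{I}_{w,i}}$ does not factor through a projective, because $\mathscr{I}_{w,i}$ is not projective and $\Hom(\mathscr{I}_{w,i},\mathscr{I}_{w,i})=\mc$. So the $\frac{p-1}{2}$ objects $\mathscr{I}_{w,i}$ remain simple and pairwise non-isomorphic, and since all other indecomposables vanish, the quotient is semisimple with exactly these simples.

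\textbf{Fusion graph.} Any short exact sequence $\mathscr{I}_{w,i}\to\mathscr{P}_{w,i}\to\mathscr{I}_{w,i}$ collapses, since $\mathscr{P}_{w,i}$ becomes zero. Reading off Proposition \ref{prop: Summary A4 algebra properties} modulo projectives, using $\mathscr{I}_{w,i}=\mathscr{I}_{w,p-i}$, gives
\[
\mathcal{V}_2\otimes\mathscr{I}_{w,1}\simeq\mathscr{I}_{w,2},\qquad \mathcal{V}_2\otimes\mathscr{I}_{w,i}\simeq\mathscr{I}_{w,i-1}\oplus\mathscr{I}_{w,i+1}\quad\text{for } 1<i<\tfrac{p-1}{2},
\]
\[
\mathcal{V}_2\otimes\mathscr{I}_{w,\frac{p-1}{2}}\simeq\mathscr{I}_{w,\frac{p-3}{2}}\oplus\mathscr{I}_{w,\frac{p+1}{2}}=\mathscr{I}_{w,\frac{p-3}{2}}\oplus\mathscr{I}_{w,\frac{p-1}{2}}.
\]
In the first of these, the term $\mathscr{I}_{w,0}$ has been dropped because it is projective. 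The last line produces a self-loop at the end vertex. This is precisely the tadpole graph $T_{\frac{p-1}{2}}$. It is nontrivial, and it is not $A_{p-1}$, since it has $\frac{p-1}{2}$ vertices, which completes the identification.
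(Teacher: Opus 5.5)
Your proof is correct and follows the same route as the paper. Exactness of $\mathscr{A}_4^{\chi}$, together with negligibility of identities of projective $\bsltwo$-modules, puts the identity of every projective $\mathscr{A}_4^{\chi}$-module into $\mathcal{J}^{\mathcal{N}}$; the tadpole $T_{\frac{p-1}{2}}$ is then read off the $\mathcal{V}_2^{+}$ fusion rules once $\mathscr{I}_{w,0}$ and the $\mathscr{P}_{w,i}$ are discarded.

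Your extra step is valid and fills in a point the paper only asserts. You show that $\mathcal{J}^{\mathcal{N}}$ is contained in the ideal of morphisms factoring through projectives, so the $\mathscr{I}_{w,i}$, $1\leq i\leq\frac{p-1}{2}$, genuinely survive. The fact you need there (negligible morphisms in $\Rep_0\bsltwo$ factor through projectives) does not come from the structure of $\mathcal{P}_n^{+}$. It holds simply because the only non-projective indecomposables in $\Rep_0\bsltwo$ are the simples $\mathcal{V}_n^{+}$, $n<p$, which have $[n]\neq 0$.
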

\begin{proof}
As $\mathscr{A}_{4}^{\chi}(\chi_{12};\frac{-w^2}{4\chi_{12}};w)$ is an exact comodule algebra, if $W$ is an $\mathscr{A}_{4}^{\chi}(\chi_{12};\frac{-w^2}{4\chi_{12}};w)$ representation, and $P$ a projective $\bsltwo$ representation, then $P\otimes W$ will be a projective representation in $\Rep \mathscr{A}_{4}^{\chi}(\chi_{12};\frac{-w^2}{4\chi_{12}};w)$. Hence as $\id_P\in\mathcal{N}$, any object appearing as a direct summand of the decomposition of $P\otimes W$ will be in the module ideal $\mathcal{J}^{N}$. From the tensor product decompositions in Proposition \ref{prop: Summary A4 algebra properties} we see that every projective representation appears as a direct summand, and so the resulting semisimplification will contain only the non-projective representations as objects. The resulting semisimplified category can then be seen to have fusion rules given by the tadpole graph $T_{\frac{p-1}{2}}$.
\end{proof}
We note in the even root of unity case, the semsimplification of $\Rep \mathscr{A}_{4}^{\chi}(\chi_{12};\frac{-w^2}{4\chi_{12}};w)$ just gives the $A_{\tp-1}$ module category.
\begin{figure}[H]
	\centering
	\includegraphics[width=0.9\linewidth]{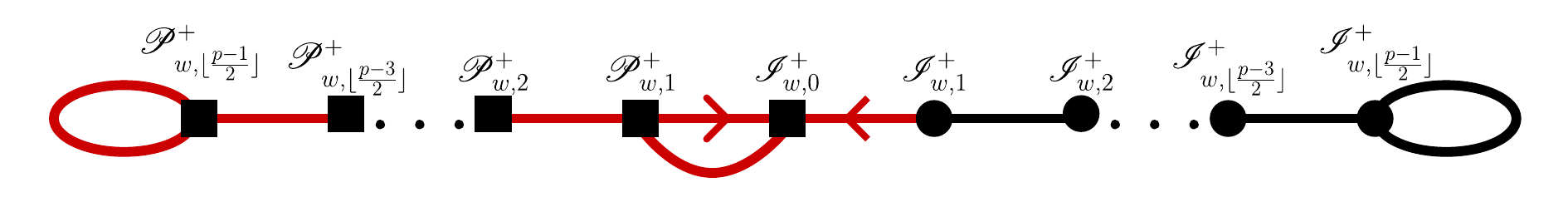}
	\caption{The fusion graph associated to $\Rep\mathscr{A}_4^{\chi}(\chi_{12};\frac{-2^2}{4\chi_{12}};w)$ for $q$ an odd root of unity. The square vertices denote projective objects, and the black edges are those that remain after semisimplification.}
\end{figure} 
\begin{figure}[H]
	\centering
	\includegraphics[width=0.7\linewidth]{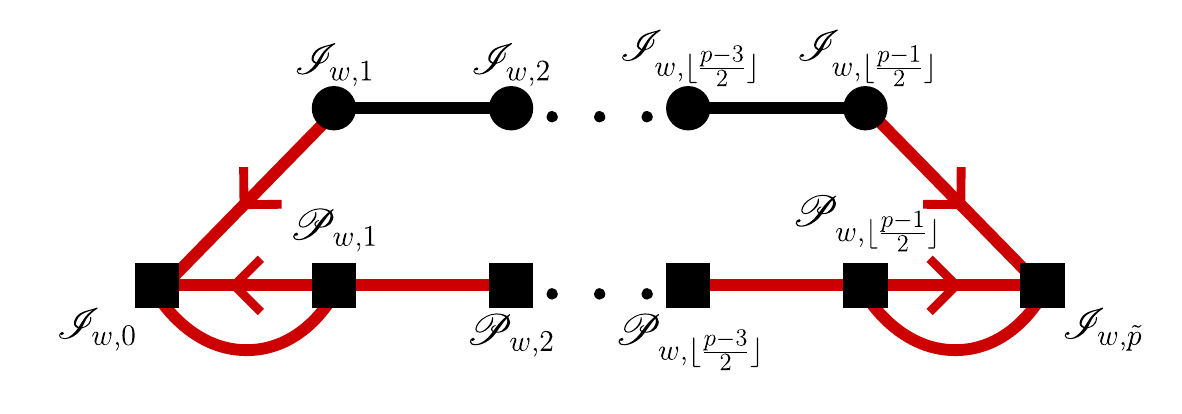}
	\caption{The fusion graph associated to $\Rep\mathscr{A}_4^{\chi}(\chi_{12};\frac{-2^2}{4\chi_{12}};w)$ for $q$ an even root of unity.}
\end{figure}

\subsection{The $D_n$ module categories}\label{section: Dn module categories}

For the even root of unity case, we first note that by the classification of module categories over $\usltwo$ fusion categories, we know that we only need to consider $q$ such that $\tp$ is even. To produce the $D_n$ module categories in this case, we again consider the algebra $\mathscr{A}_{4}^{\chi}(\chi_{12};-\frac{w^2}{4\chi_{12}};w)$ from Proposition \ref{prop: Summary A4 algebra properties}. However, this algebra no longer produces the module categories directly. Instead, we need to consider an extension of this algebra. Motivated by the algebra $\mathcal{A}'(\xi,\mu)$ in \cite{Mombelli}[Section 8.4], we introduce the following:

\begin{defin}
Let $q$ be an even root of unity and $\tp$ even. We define the $\bsltwo$ comodule algebra $\mathscr{B}^{\chi}(\chi_{12};\chi_{21};w)$ to be the extension of the algebra $\mathscr{A}_4^{\chi}(\chi_{12};\chi_{21};w)$ by the extra generator $x$, with relations
\[
xz_{22} = -z_{22}x, \qquad x^4=1, \qquad \delta(x) =  \hk^{\frac{\tp}{2}}\otimes x.\]
\end{defin}
Again for our purposes, we want to consider a case where this algebra is non-semisimple, and so will only consider the specialization $\mathscr{B}^{\chi}(\chi_{12};\frac{-w^2}{4\chi_{12}};w)$. From Proposition \ref{prop: Summary A4 algebra properties}, we can obtain the representations of $\mathscr{B}^{\chi}$:
\begin{prop}
Let $\chi_{12},w\in\mc\setminus\{0\}$ and $q$ be an even root of unity with $\tp$ even. The algebra $\mathscr{B}^{\chi}(\chi_{12};\frac{-w^2}{4\chi_{12}};w)$ is noncommutative, non-semisimple, and $4p$-dimensional. 
It has $\frac{p}{2}+4$ irreducibles, which we denote by $\mathscr{J}_{w,i}^{\pm}$, $0\leq i< \frac{p}{4}$, and $\mathscr{J}_{w,\frac{p}{4}}^{\mu}$, $\mu \in \{\pm 1, \pm i\}$. In particular, $\mathscr{J}_{w,0}^{\pm}$ are the projective irreducible representations. The irreducibles are given by
\begin{gather*}
\mathscr{J}_{w,i}^{\pm}(z_{22}) = \left(\begin{array}{cc} \frac{w}{2}(q^i+q^{-i}) & 0 \\ 0 & \frac{-w}{2}(q^i+q^{-i}) \end{array}\right), \qquad \mathscr{J}_{w,i}^{\pm}(x) = \left(\begin{array}{cc} 0 & 1 \\ \pm 1 & 0 \end{array}\right), \\
\mathscr{J}_{w,\frac{p}{4}}^{\kappa}(z_{22}) = (0), \qquad \mathscr{J}_{w,\frac{p}{4}}^{\mu}(x) = (\mu).
\end{gather*}
We denote the corresponding projective indecomposables by $\mathscr{Q}_{w,i}^{\pm}$, $0< i<\frac{p}{4}$, and $\mathscr{Q}_{w,\frac{p}{4}}^{\mu}$. They are given by
\begin{gather*}
\mathscr{Q}^{\pm}_{w,i}(z_{22}) = \left(\begin{array}{cccc} \frac{w}{2}(q^i+q^{-i}) & 0 & 0 & 0 \\ 1 & \frac{w}{2}(q^i+q^{-i}) & 0 & 0 \\ 0 & 0 & \frac{-w}{2}(q^i+q^{-i}) & 0 \\ 0 & 0 & 1 & \frac{-w}{2}(q^i+q^{-i}) \end{array}\right), \qquad \mathscr{Q}^{\pm}_{w,i}(x) = \left(\begin{array}{cccc} 0 & 0 & 1 & 0 \\ 0 & 0 & 0 & -1 \\ \pm 1 & 0 & 0 & 0 \\ 0 & \mp 1 & 0 & 0 \end{array}\right), \\
\mathscr{Q}_{w,\frac{p}{4}}^{\mu}(z_{22}) = \left(\begin{array}{cc} 0 & 0 \\ 1 & 0 \end{array}\right), \qquad \mathscr{Q}_{w,\frac{p}{4}}^{\mu}(x) = \left(\begin{array}{cc} \mu & 0 \\ 0 & -\mu \end{array}\right). 
\end{gather*}
\end{prop}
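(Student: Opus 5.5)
The plan is to recognise $\mathscr{B}^{\chi}(\chi_{12};\frac{-w^2}{4\chi_{12}};w)$ as a skew group algebra $A\rtimes\mz_4$. Here $A:=\mathscr{A}_4^{\chi}(\chi_{12};\frac{-w^2}{4\chi_{12}};w)\simeq\mc[z_{22}]/(\phi(z_{22}))$, with
\[
\phi(z_{22})=\prod_{i=0}^{p-1}\Bigl(z_{22}-\tfrac{w}{2}(q^i+q^{-i})\Bigr),
\]
as in Proposition \ref{prop: Summary A4 algebra properties}. The generator $x$ of $\mz_4$ acts by the automorphism $\theta:z_{22}\mapsto -z_{22}$, so $x^2$ acts trivially. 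The representation theory then follows from Clifford theory for $A\rtimes\mz_4$, using the description of the $A$-modules already given in Proposition \ref{prop: Summary A4 algebra properties}.

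\emph{Well-definedness and dimension.} I first check that $\theta$ preserves the ideal $(\phi(z_{22}))$, i.e.\ that the multiset of roots $c_i:=\frac{w}{2}(q^i+q^{-i})$, $0\le i\le p-1$, is closed under negation. Since $q^{\tp}=-1$, we get $c_{i+\tp}=-c_i$, with indices taken mod $p=2\tp$, and multiplicities are preserved. Hence $\theta$ is an automorphism of $A$ with $\theta^4=\id$, and the smash product $A\#\mc[\mz_4]$ is defined and has basis $\{z_{22}^a x^b : 0\le a<p,\ 0\le b<4\}$. The defining relations of $\mathscr{B}^{\chi}$ are exactly those of this smash product, so $\dim\mathscr{B}^{\chi}=4p$. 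Noncommutativity is immediate from $xz_{22}=-z_{22}x$ with $z_{22}\neq 0$. For non-semisimplicity, note that $\operatorname{rad}A\neq 0$ because $A$ is not semisimple, and $\operatorname{rad}A$ is $\theta$-stable. Then $(\operatorname{rad}A)\#\mc[\mz_4]$ is a nonzero nilpotent two-sided ideal of $\mathscr{B}^{\chi}$.

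\emph{Irreducibles.} Simple $A$-modules are the characters $z_{22}\mapsto c$ for the distinct roots $c$, and $\mz_4$ acts on them by $c\mapsto -c$. By Clifford theory, simples of $A\rtimes\mz_4$ correspond to pairs consisting of an orbit and an irreducible representation of the stabiliser.
\begin{itemize}
\item For $c\neq 0$ the stabiliser is $\{1,x^2\}$, and $x^2$ acts centrally by $\pm1$. Inducing gives the $2$-dimensional modules $\mathscr{J}^{\pm}_{w,i}$, on which $z_{22}=\operatorname{diag}(c,-c)$ and $x$ swaps the two lines with $x^2=\pm1$.
\item The root $c=0$ occurs exactly when $q^{i}=\pm\sqrt{-1}$, i.e.\ $i=p/4$ (here $\tp$ even is used). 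It is fixed by all of $\mz_4$ and gives four $1$-dimensional modules with $x\mapsto\mu$, $\mu^4=1$.
\end{itemize}
I would then count the distinct nonzero roots $c_i$, $0\le i<\frac p4$, up to sign, and add the four characters at $c=0$ to obtain the stated number of irreducibles.

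\emph{Projectives.} Projective indecomposables of $A\rtimes\mz_4$ are summands of induced modules $\mathscr{B}^{\chi}\otimes_A P$, where $P$ runs over the projective indecomposable $A$-modules. Simple roots ($i=0$, and its negative) have $P=\mathscr{I}_{w,0}$, so the induced modules are already the irreducibles $\mathscr{J}^{\pm}_{w,0}$, which are therefore projective. Double roots have $P=\mathscr{P}_{w,i}$, the $2\times2$ Jordan block. Inducing and splitting by the eigenvalue of $x^2$ yields the $4$-dimensional $\mathscr{Q}^{\pm}_{w,i}$. At $c=0$ the Jordan block is $\theta$-stable; splitting by the $x$-eigenvalue yields the $2$-dimensional $\mathscr{Q}^{\mu}_{w,p/4}$. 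Here $x$ must act by $\mu$ on the top and by $-\mu$ on the socle, because $z_{22}$ anticommutes with $x$.

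Finally I would verify that the listed matrices satisfy $xz_{22}=-z_{22}x$, $x^4=1$ and the relation $\phi(z_{22})=0$. As a consistency check I would confirm $\sum(\dim\mathscr{Q})(\dim\mathscr{J})=4p$. The main obstacle I expect is the bookkeeping of root multiplicities and sign-orbits: deciding which $c_i$ coincide, which are simple, and how $i\mapsto i+\tp$ interacts with the labelling $0\le i<\frac p4$. This is where the stated counts must be matched carefully.
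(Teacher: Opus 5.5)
Your proposal is correct and takes the same route as the paper, whose proof says only that the classification follows from Proposition \ref{prop: Summary A4 algebra properties} together with the action of $x$. Your smash-product and Clifford-theory framing makes this precise, and also justifies two points the paper asserts only ``by construction'': the dimension $4p$ (via evenness of $\phi$) and non-semisimplicity. The bookkeeping you flag is routine: the distinct roots are $c_0,\dots,c_{p/2}$, negation acts by $i\mapsto \frac{p}{2}-i$ with unique fixed point $i=\frac{p}{4}$, giving $2\cdot\frac{p}{4}+4=\frac{p}{2}+4$ simples and $\sum\dim\mathscr{Q}\cdot\dim\mathscr{J}=8+16(\frac{p}{4}-1)+8=4p$.
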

\begin{proof}
By construction the algebra is non-commutative and non-semisimple. The classification of its irreducibles and projective indecomposables follows from Proposition \ref{prop: Summary A4 algebra properties}, combined with considering the action of the generator $x$.
\end{proof}
\begin{prop}
The tensor product decompositions of irreducible $\mathscr{B}^{\chi}(\chi_{12};\frac{-w^2}{4\chi_{12}};w)$ representations, with respect to the $\bsltwo$ irreducible $\mathcal{V}_{2}^{+}$, are given as follows:
\begin{gather*}
\mathcal{V}_{2}^{+}\otimes\mathscr{J}_{w,0}^{\pm} \simeq \mathscr{Q}_{w,1}^{\mp}, \\
\mathcal{V}_{2}^{+}\otimes\mathscr{J}_{w,i}^{\pm} \simeq \mathscr{J}_{w,i-1}^{\mp}\oplus\mathscr{J}_{w,i+1}^{\mp}, ~ 0< i<\frac{p}{4}-1, \\
\mathcal{V}_{2}^{+}\otimes\mathscr{J}_{w,\frac{p}{4}-1}^{+} \simeq \mathscr{J}_{w,i-1}^{-}\oplus\mathscr{J}_{w,\frac{p}{4}}^{i}\oplus \mathscr{J}_{w,\frac{p}{4}}^{-i}, \qquad \mathcal{V}_{2}^{+}\otimes\mathscr{J}_{w,\frac{p}{4}-1}^{-} \simeq \mathscr{J}_{w,i-1}^{+}\oplus\mathscr{J}_{w,\frac{p}{4}}^{1}\oplus \mathscr{J}_{w,\frac{p}{4}}^{-1}, \\
\mathcal{V}_{2}^{+}\otimes\mathscr{J}_{w,\frac{p}{4}}^{\pm 1} \simeq \mathscr{J}_{w,\frac{p}{4}-1}^{-}, \qquad \mathcal{V}_{2}^{+}\otimes\mathscr{J}_{w,\frac{p}{4}}^{\pm i} \simeq \mathscr{J}_{w,\frac{p}{4}-1}^{+}.
\end{gather*}
The tensor product decompositions of projective indecomposable $\mathscr{B}^{\chi}(\chi_{12};\frac{-w^2}{4\chi_{12}};w)$ representations, with respect to the $\bsltwo$ irreducible $\mathcal{V}_{2}^{+}$, are given as follows:
\begin{gather*}
\mathcal{V}_{2}^{+}\otimes\mathscr{Q}_{w,1}^{\pm} \simeq \mathscr{Q}_{w,2}^{\mp}\oplus 2\mathscr{J}_{w,0}^{\mp} \\
\mathcal{V}_{2}^{+}\otimes\mathscr{Q}_{w,i}^{\pm} \simeq \mathscr{Q}_{w,i-1}^{\mp}\oplus\mathscr{Q}_{w,i+1}^{\mp}, ~ 0<i<\frac{p}{4}-1, \\
\mathcal{V}_{2}^{+}\otimes\mathscr{Q}_{w,\frac{p}{4}-1}^{+} \simeq \mathscr{Q}_{w,\frac{p}{4}-2}^{-}\oplus\mathscr{Q}_{w,\frac{p}{4}}^{i}\oplus\mathscr{Q}_{w,\frac{p}{4}}^{-i}, \qquad \mathcal{V}_{2}^{+}\otimes\mathscr{Q}_{w,\frac{p}{4}-1}^{-} \simeq \mathscr{Q}_{w,\frac{p}{4}-2}^{+}\oplus\mathscr{Q}_{w,\frac{p}{4}}^{1}\oplus\mathscr{Q}_{w,\frac{p}{4}}^{-1}, \\
\mathcal{V}_{2}^{+}\otimes\mathscr{Q}_{w,\frac{p}{4}}^{\pm 1} \simeq \mathscr{Q}_{w,\frac{p}{4}-1}^{-}, \qquad \mathcal{V}_{2}^{+}\otimes\mathscr{Q}_{w,\frac{p}{4}}^{\pm i} \simeq \mathscr{Q}_{w,\frac{p}{4}-1}^{+}.
\end{gather*}
For $\tp=2$ we instead have the following decompositions
\[
\mathcal{V}_{2}^{+}\otimes\mathscr{J}_{0}^{+}\simeq\mathscr{Q}^{i}_{1}\oplus\mathscr{Q}^{-i}_{1}, \quad \mathcal{V}_{2}^{+}\otimes\mathscr{J}_{0}^{-}\simeq\mathscr{Q}^{1}_{1}\oplus\mathscr{Q}^{-1}_{1}, \quad
\mathcal{V}_{2}^{+}\otimes\mathscr{Q}_{1}^{\pm 1}\simeq 2\mathscr{J}^{-}_{0}, \quad \mathcal{V}_{2}^{+}\otimes\mathscr{Q}_{1}^{\pm i}\simeq 2\mathscr{J}^{+}_{0}.\]
Finally, with respect to the one-dimensional $\bsltwo$ representation $\mathcal{V}_{1}^{-}$, we have
\begin{gather*}
\mathcal{V}_{1}^{-}\otimes\mathscr{J}_{w,i}^{\kappa} \simeq \mathscr{J}_{w,i}^{-\kappa}, \qquad \mathcal{V}_{1}^{-}\otimes\mathscr{Q}_{w,i}^{\kappa} \simeq \mathscr{Q}_{w,i}^{-\kappa}, 
\end{gather*}
for any valid choice of $i$ and $\kappa$.
\end{prop}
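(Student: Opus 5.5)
The plan is to reduce everything to the $\mathscr{A}_4^{\chi}(\chi_{12};\frac{-w^2}{4\chi_{12}};w)$ decompositions in Proposition \ref{prop: Summary A4 algebra properties}, via restriction along the inclusion $\mathscr{A}_4^{\chi}\subset\mathscr{B}^{\chi}$, and then use the extra generator $x$ to fix the signs and $\mu$-labels. First I will check that the coaction on $\mathscr{B}^{\chi}$ is well defined and compatible with $xz_{22}=-z_{22}x$. Since $\delta(x)=\hk^{\frac{\tp}{2}}\otimes x$ and $\hk^{\frac{\tp}{2}}$ anticommutes with $E$ and $F$ (because $q^{\tp}=-1$) while commuting with $\hk^{-1}$, the element $x$ conjugates $\tilde{\beta}(z_{22})$ exactly to $-\tilde{\beta}(z_{22})$. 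This also tells us how $x$ acts on a tensor product $\mathcal{V}_2^{+}\otimes M$: it acts by $\hk^{\frac{\tp}{2}}\otimes x$.

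Next I will compute the restriction of each $\mathscr{B}^{\chi}$ irreducible and projective to $\mathscr{A}_4^{\chi}$.
\begin{itemize}
\item $\mathscr{J}^{\pm}_{w,i}$ restricts to $\mathscr{I}_{w,i}\oplus\mathscr{I}_{w,i'}$, where $i'$ is the index with eigenvalue $-\frac{w}{2}(q^i+q^{-i})=\frac{w}{2}(q^{i+\tp}+q^{-i-\tp})$, i.e.\ $i'=\tp-i$ in the labelling $\mathscr{I}_{w,i}=\mathscr{I}_{w,p-i}$.
\item Similarly, each $\mathscr{Q}$ restricts to a sum of two $\mathscr{P}$'s, or at $i=0$ to the projective irreducibles $\mathscr{I}_{w,0}$ and $\mathscr{I}_{w,\tp}$.
\end{itemize}
Applying Proposition \ref{prop: Summary A4 algebra properties} to $\mathcal{V}_2^{+}\otimes(\text{restriction})$ then determines the $z_{22}$-eigenvalue content and the Jordan structure of $\mathcal{V}_2^{+}\otimes\mathscr{J}$ and $\mathcal{V}_2^{+}\otimes\mathscr{Q}$. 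This gives the shape of each decomposition: $i\mapsto i\pm1$, with projectivity propagating from $\mathscr{J}_{w,0}^{\pm}$ by exactness, exactly as in the proof for $\mathscr{A}_4^{\chi}$.

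The sign labels come from the $x$-action. Write $\mathcal{V}_2^{+}$ with weight vectors $v_0,v_1$ of $\hk$-weights $q^{\pm1}$. Then $\hk^{\frac{\tp}{2}}$ acts on them by $q^{\pm\frac{\tp}{2}}$, which is $\pm$ a primitive fourth root of unity since $q^{\tp}=-1$. Computing $x^2$ on the relevant two-dimensional $z_{22}$-stable blocks then works as follows.
\begin{itemize}
\item In $\mathscr{J}^{\pm}_{w,i}$ we have $x^2=\pm1$.
\item On $\mathcal{V}_2^{+}\otimes\mathscr{J}$, $x^2$ acts by $\hk^{\tp}\otimes x^2=-1\otimes x^2$, which flips $\pm$ to $\mp$.
\item At $i=\frac{p}{4}$ the eigenvalue $0$ is fixed by $z_{22}\mapsto-z_{22}$, so the block splits into $x$-eigenlines. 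The eigenvalues are square roots of $-(\pm1)$, namely $\pm i$ from $\mathscr{J}^{+}$ and $\pm1$ from $\mathscr{J}^{-}$, matching the stated formulas.
\end{itemize}
The same computation handles the $\mathscr{Q}$'s and the case $\tp=2$. For $\mathcal{V}_1^{-}$, $\hk^{\frac{\tp}{2}}$ acts by $(-1)^{\frac{\tp}{2}}q^{0}$ up to the sign $\kappa^{\frac{\tp}{2}}$. Squaring gives $x^2\mapsto -x^2$, which sends $\kappa\mapsto-\kappa$; the $z_{22}$-eigenvalues are unchanged up to the symmetric sign.

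The main obstacle is bookkeeping at the boundary index $\frac{p}{4}-1$ and at $\frac{p}{4}$. There I have to verify that the tensor product actually splits as a direct sum, rather than as a non-split extension, and identify the correct $\mu$ values. For the splitting I would argue by dimension and semisimplicity of the $z_{22}$-action: the $A_4$-restriction there consists of irreducibles $\mathscr{I}$ with distinct eigenvalues, and $x$ commutes with $z_{22}^2$, so the decomposition into $z_{22}^2$-eigenspaces is $\mathscr{B}^{\chi}$-stable. For the projective cases I would instead invoke exactness to guarantee that the summands are projective, and then match dimensions.
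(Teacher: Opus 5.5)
Your route is the paper's own: restrict along $\mathscr{A}_4^{\chi}\subset\mathscr{B}^{\chi}$, apply Proposition \ref{prop: Summary A4 algebra properties}, then track $x$. Your key observation for $\mathcal{V}_2^{+}$ is correct: $x^2$ is central in $\mathscr{B}^{\chi}$, and $\hk^{\tp}$ acts by $-1$ on $\mathcal{V}_2^{+}$, so the $x^2$-sign flips.

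\textbf{The $\mathcal{V}_1^{-}$ step fails.} On $\mathcal{V}_1^{-}$ we have $E=F=0$ and $\hk=-1$. Hence $\hk^{\frac{\tp}{2}}$ acts by $(-1)^{\frac{\tp}{2}}$, and $\hk^{\tp}$ acts by $(-1)^{\tp}=1$, because $\tp$ is even (the standing hypothesis for $\mathscr{B}^{\chi}$). Squaring therefore gives $x^2\mapsto x^2$, not $-x^2$. On $\mathcal{V}_1^{-}\otimes M$ the generators act by $z_{22}\mapsto -z_{22}$ and $x\mapsto(-1)^{\frac{\tp}{2}}x$.

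The spectrum $\{\pm\frac{w}{2}(q^i+q^{-i})\}$ is symmetric under negation. The central element $x^2$ is exactly what separates $\mathscr{J}^{+}_{w,i}$ from $\mathscr{J}^{-}_{w,i}$, and $\mathscr{Q}^{+}_{w,i}$ from $\mathscr{Q}^{-}_{w,i}$. So the correct computation gives, for $i<\frac{p}{4}$,
\[
\mathcal{V}_1^{-}\otimes\mathscr{J}^{\pm}_{w,i}\simeq\mathscr{J}^{\pm}_{w,i}, \qquad \mathcal{V}_1^{-}\otimes\mathscr{Q}^{\pm}_{w,i}\simeq\mathscr{Q}^{\pm}_{w,i}.
\]
At $i=\frac{p}{4}$ the label changes only by $\mu\mapsto(-1)^{\frac{\tp}{2}}\mu$. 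Your argument therefore does not establish the last displayed rule. With the paper's conventions ($\delta(x)=\hk^{\frac{\tp}{2}}\otimes x$, $\hk=-1$ on $\mathcal{V}_1^{-}$, and $\pm$ the value of $x^2$), that rule holds only for the one-dimensional labels and only when $\tp\equiv 2\pmod 4$. The paper's one-line proof contains nothing that repairs this.

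\textbf{Smaller gaps in the $\mathcal{V}_2^{+}$ part.}
\begin{enumerate}
\item At the boundary index, knowing $x^2=\mp 1$ on the $0$-eigenspace of $z_{22}$ does not show that both square roots occur. It follows from a trace count: $x$ acts by $\hk^{\frac{\tp}{2}}\otimes x$, which is traceless on $\mathcal{V}_2^{+}\otimes\mathscr{J}$, and also traceless on the part where it swaps the $\pm\lambda$ eigenlines.
\item For the $\mathscr{Q}$'s and for $\tp=2$, the multiset of $x$-eigenvalues cannot distinguish, say, $\mathscr{Q}^{i}_{w,\frac{p}{4}}\oplus\mathscr{Q}^{-i}_{w,\frac{p}{4}}$ from $2\mathscr{Q}^{i}_{w,\frac{p}{4}}$. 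You must compute $x$ on the socle $\ker z_{22}$. Alternatively, use $\Hom(\mathcal{V}_2^{+}\otimes P,J)\simeq\Hom(P,(\mathcal{V}_2^{+})^{*}\otimes J)$ together with the rules for irreducibles.
\item Exactness of $\mathscr{B}^{\chi}$ is never established. Replace it by the following: $\mathscr{B}^{\chi}=\bigoplus_{k=0}^{3}\mathscr{A}_4^{\chi}x^k$, so averaging over the $x^k$ splits the multiplication map $\mathscr{B}^{\chi}\otimes_{\mathscr{A}_4^{\chi}}M\to M$. Hence a $\mathscr{B}^{\chi}$-module is projective whenever its restriction to $\mathscr{A}_4^{\chi}$ is.
\end{enumerate}
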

\begin{proof}
This follows from the fusion rules in Proposition \ref{prop: Summary A4 algebra properties}, after accounting for the extra generator $x$.
\end{proof}
Using the above tensor product decompositions, we can now state how to construct the $D_n$ module categories via semisimplification:
\begin{thm}
For $q$ an even root of unity and $\tp$ even, consider the $\mathscr{B}^{\chi}(\chi_{12};\frac{-w^2}{4\chi_{12}};w)$ representation $\mathscr{J}_{w,1}^{+}$. Then it generates a module category $\mathcal{M}_{\mathscr{J}}$ over $\Rep_0\bsltwo$, such that the semisimplification of $\mathcal{M}_{\mathscr{J}}$ is a module category with fusion graph $D_{\frac{\tp}{2}+1}$ over the $\Rep \usltwo$ fusion category.
\end{thm}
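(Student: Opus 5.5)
The plan is to follow the proof of the $T_n$ case: identify which objects of $\mathcal{M}_{\mathscr{J}}$ are sent to zero by the module ideal $\mathcal{J}^{\mathcal{N}}$, and then read off the fusion graph of what survives.

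\emph{Step 1: describe $\mathcal{M}_{\mathscr{J}}$.} Let $\mathcal{M}_{\mathscr{J}}$ be the additive subcategory of $\Rep\mathscr{B}^{\chi}(\chi_{12};\frac{-w^2}{4\chi_{12}};w)$ generated by direct summands of $X\otimes\mathscr{J}_{w,1}^{+}$, with $X$ running over $\Rep_0\bsltwo$. Since $\Rep_0\bsltwo$ is tensor generated by $\mathcal{V}_2^{+}$, $\mathcal{V}_1^{-}$ and the projectives, it suffices to iterate $\mathcal{V}_2^{+}\otimes-$ and $\mathcal{V}_1^{-}\otimes-$, using the decompositions of the preceding proposition. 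Starting from $\mathscr{J}_{w,1}^{+}$, $\mathcal{V}_2^{+}$ produces $\mathscr{J}_{w,0}^{-}$, $\mathscr{J}_{w,2}^{-}$, and so on down the chain up to $\mathscr{J}_{w,\frac{p}{4}-1}^{\pm}$ and the four one-dimensional objects $\mathscr{J}_{w,\frac{p}{4}}^{\mu}$. It also produces the projectives $\mathscr{Q}_{w,i}^{\pm}$ and $\mathscr{Q}_{w,\frac{p}{4}}^{\mu}$ via $\mathcal{V}_2^{+}\otimes\mathscr{J}_{w,0}^{\pm}\simeq\mathscr{Q}_{w,1}^{\mp}$. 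Tensoring with $\mathcal{V}_1^{-}$ flips all signs. So the indecomposables of $\mathcal{M}_{\mathscr{J}}$ are exactly the irreducibles and projective indecomposables listed earlier, and $\mathcal{M}_{\mathscr{J}}$ is a module category over $\Rep_0\bsltwo$.

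\emph{Step 2: kill the projectives.} Next, show that every projective object of $\mathcal{M}_{\mathscr{J}}$ has identity morphism in $\mathcal{J}^{\mathcal{N}}$. This includes the projective irreducibles $\mathscr{J}_{w,0}^{\pm}$. Each such object is a direct summand of $P\otimes W$ for some projective $P\in\Rep_0\bsltwo$ and some $W\in\mathcal{M}_{\mathscr{J}}$; for instance $\mathscr{Q}_{w,1}^{\mp}$ and $\mathscr{J}_{w,0}^{\mp}$ appear in $\mathcal{V}_2^{+}\otimes\mathcal{V}_2^{+}\otimes\mathscr{J}_{w,0}^{\pm}$. Since $\id_P\in\mathcal{N}$, condition 3 of a module ideal gives $\id_P\otimes\id_W\in\mathcal{J}^{\mathcal{N}}$. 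Composing with the inclusion and projection of the summand, as in the $\mathscr{A}_0(3)$ example and Proposition \ref{prop: semisimplification of semisimple module categories}, gives $\id$ of the summand in $\mathcal{J}^{\mathcal{N}}$. Conversely, $\id_{\mathscr{J}}$ for a non-projective irreducible $\mathscr{J}$ should not lie in $\mathcal{J}^{\mathcal{N}}$. I would argue this by showing that the non-projective simples remain nonzero and simple, for example by exhibiting a module trace compatible with the categorical trace, or by noting that morphisms in $\mathcal{J}^{\mathcal{N}}$ between non-projective simples factor through projectives. Such morphisms then vanish on the simple heads.

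\emph{Step 3: read off the graph.} The surviving simples are $\mathscr{J}_{w,i}^{\pm}$ for $1\le i\le\frac{p}{4}-1$ together with the four $\mathscr{J}_{w,\frac{p}{4}}^{\mu}$. Restricting to the $+$-graded fusion subcategory $\Rep\usltwo$ means working modulo $\mathcal{V}_1^{-}$. The sign alternation under $\mathcal{V}_2^{+}$ then splits the objects into two orbits related by $\mathcal{V}_1^{-}$, and $\mathcal{M}_{\mathscr{J}}$ is generated by $\mathscr{J}_{w,1}^{+}$. In the component containing $\mathscr{J}_{w,1}^{+}$, the $\mathcal{V}_2^{+}$-edges after deleting projective summands are as follows:
\begin{itemize}
\item a path $\mathscr{J}_{w,1}\to\mathscr{J}_{w,2}\to\cdots\to\mathscr{J}_{w,\frac{p}{4}-1}$, alternating in sign, the edge to $\mathscr{J}_{w,0}$ being dropped;
\item two leaves $\mathscr{J}_{w,\frac{p}{4}}^{\mu},\mathscr{J}_{w,\frac{p}{4}}^{-\mu}$ attached to the final vertex, with $\mu\in\{1,i\}$ determined by parity.
\end{itemize}
Counting vertices, and using $p=2\tp$, this gives $\frac{\tp}{2}-1+2=\frac{\tp}{2}+1$ vertices in the shape of $D_{\frac{\tp}{2}+1}$. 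The case $\tp=2$ is treated separately using its listed decompositions.

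\emph{Main obstacle.} I expect the hardest step to be the non-vanishing part of Step 2: proving that $\mathcal{J}^{\mathcal{N}}$ does not contain $\id$ of any non-projective simple. The paper's notion of module ideal has no \emph{a priori} minimality, so I would fix $\mathcal{J}^{\mathcal{N}}$ to be the smallest module ideal containing all $\alpha\otimes\id_M$ with $\alpha\in\mathcal{N}$. I would then show that its elements factor through objects of the form $P\otimes M$ with $P$ projective, using the fact that in $\Rep_0\bsltwo$ negligible morphisms between these objects factor through projectives. This reduces the claim to the statement that no split epimorphism from a projective onto a non-projective simple exists, which is clear. The bookkeeping of which sign and $\mu$-labels fall in the same component is routine but error-prone.
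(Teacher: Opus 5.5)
Your route is the one the paper intends. The paper gives no separate proof: its implicit argument is the $T_n$ one, discarding projective summands and reading the graph off the listed $\mathcal{V}_2^{+}$-rules. Your component of $\mathscr{J}_{w,1}^{+}$ and your vertex count are correct.

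\textbf{The gap.} It sits in the non-vanishing half of Step 2, where you expected the difficulty. Elements of your minimal $\mathcal{J}^{\mathcal{N}}$ factor through objects $P\otimes M$ with $P$ projective in $\Rep_0\bsltwo$. To conclude that they factor through projective $\mathscr{B}^{\chi}$-modules, and so cannot split onto a non-projective simple, you need every such $P\otimes M$ to be projective. That is, $\mathcal{M}_{\mathscr{J}}$ must be exact, and you never state or prove this. The paper's $T_n$ proof takes exactness of $\mathscr{A}_4^{\chi}$ from the odd-root classification, but nothing provides it for $\mathscr{B}^{\chi}$ at even roots. Without it, a non-projective simple occurring as a summand of some $P\otimes M$ would have its identity in $\mathcal{J}^{\mathcal{N}}$, and its vertex would disappear.

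\textbf{A fix.} You can derive exactness from the listed rules.
\begin{enumerate}
\item $\mathcal{V}_2^{+}\otimes-$ preserves projectives.
\item Every projective of $\Rep_0\bsltwo$ is a summand of some $\mathcal{V}_2^{+\otimes k}\otimes\mathcal{V}_{\tp}^{\kappa}$, and $\mathcal{V}_{\tp}^{-}\simeq\mathcal{V}_1^{-}\otimes\mathcal{V}_{\tp}^{+}$.
\item Hence it suffices to show that $\mathcal{V}_{\tp}^{+}\otimes S$ is projective for each non-projective simple $S$.
\item Apply Krull--Schmidt cancellation in $\mathcal{V}_2^{+}\otimes\mathcal{V}_n^{+}\otimes S\simeq(\mathcal{V}_{n+1}^{+}\oplus\mathcal{V}_{n-1}^{+})\otimes S$. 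It shows that the multiplicity vector of non-projective simples in $\mathcal{V}_n^{+}\otimes S$ is $U_{n-1}(\Gamma)[S]$. Here $U_0=1$, $U_1=x$, $U_{m+1}=xU_m-U_{m-1}$, and $\Gamma$ is the adjacency matrix of the two copies of $D_{\frac{\tp}{2}+1}$.
\item The Coxeter number of $D_{\frac{\tp}{2}+1}$ is $\tp$, so every eigenvalue of $\Gamma$ has the form $2\cos(\pi m/\tp)$ with $1\leq m\leq\tp-1$. Therefore $U_{\tp-1}(\Gamma)=0$.
\end{enumerate}

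\textbf{Two smaller points.}

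First, your illustration in the first half of Step 2 fails. For $\tp>2$, $\mathcal{V}_2^{+}\otimes\mathcal{V}_2^{+}\simeq\mathcal{V}_1^{+}\oplus\mathcal{V}_3^{+}$ is not projective, so its identity is not negligible. Also, each listed $\mathcal{V}_2^{+}$-rule flips the sign, so $\mathscr{Q}_{w,1}^{\mp}$ and $\mathscr{J}_{w,0}^{\mp}$ are not summands of $\mathcal{V}_2^{+}\otimes\mathcal{V}_2^{+}\otimes\mathscr{J}_{w,0}^{\pm}$. The correct reason, which needs no exactness, is this. For projective $W$, the surjection $\mathcal{P}_1^{+}\otimes W\to\mathcal{V}_1^{+}\otimes W\simeq W$ splits. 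So $W$ is a summand of $\mathcal{P}_1^{+}\otimes W$, and $\id_{\mathcal{P}_1^{+}}\in\mathcal{N}$.

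Second, if you generate $\mathcal{M}_{\mathscr{J}}$ using $\mathcal{V}_1^{-}$, as in your Step 1, it contains both sign-components. Over the $+$-graded $\Rep\usltwo$, its semisimplification is then two copies of $D_{\frac{\tp}{2}+1}$. Generate instead with $\mathcal{V}_2^{+}$ and the projectives of $\Rep_0\bsltwo$ only. By exactness the projectives add nothing but projective objects, which are killed, so this gives the single copy the statement asserts.
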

We note that the $\mathscr{B}^{\chi}(\chi_{12};\frac{-w^2}{4\chi_{12}};w)$ representation $\mathscr{J}_{w,1}^{-}$ will also generate a copy of the $D_{\frac{\tp}{2}+1}$ module category via semisimplification. The fusion graphs for both representations are given as follows:
\begin{figure}[H]
	\centering
	\includegraphics[width=0.9\linewidth]{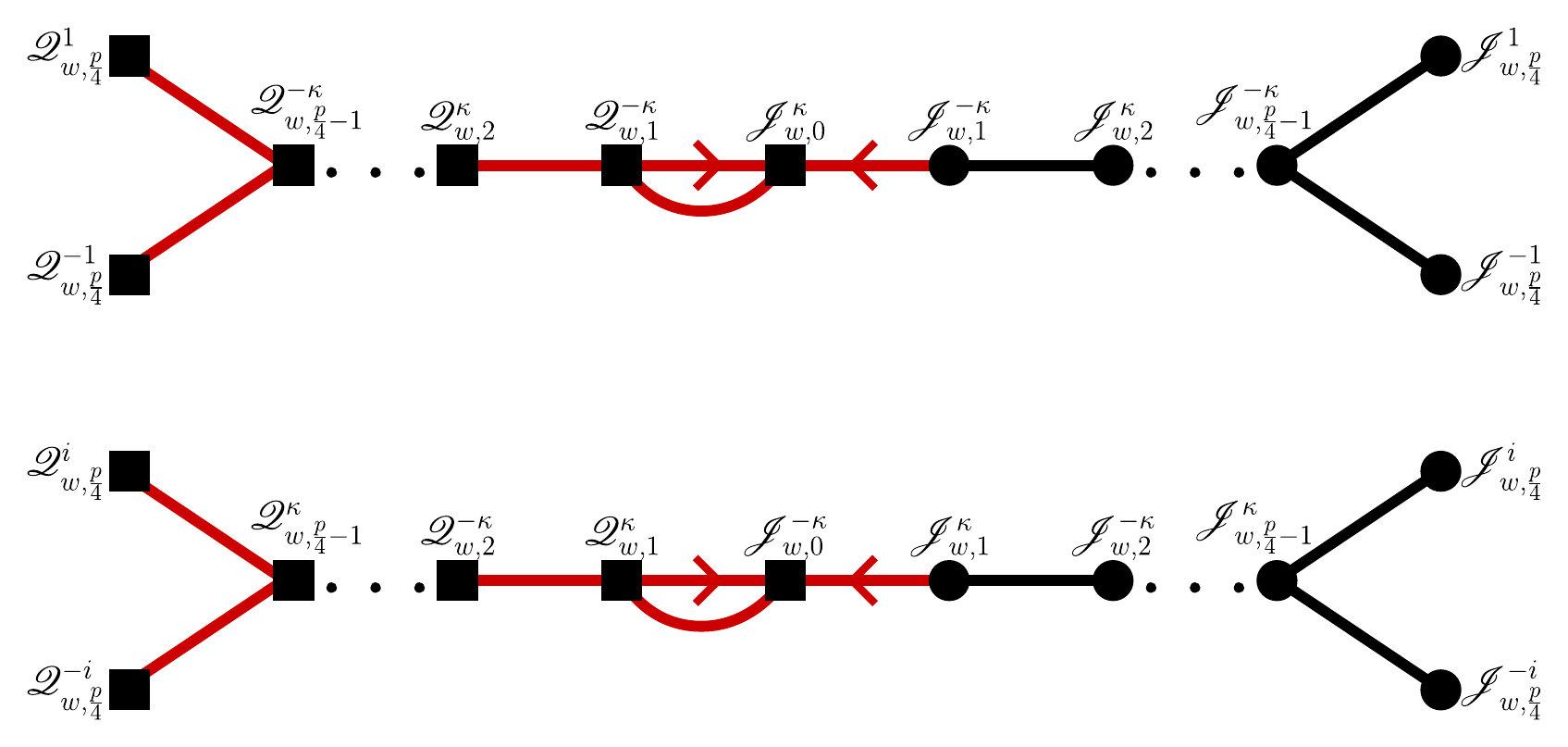}
	\caption{The fusion graphs associated to $\Rep\mathscr{B}^{\chi}(\chi_{12};\frac{-2^2}{4\chi_{12}};w)$ for $q$ an even root of unity and $\tp$ even. Here we denote $\kappa=+$ if $\tp$ is divisible by $4$, and $\kappa=-$ otherwise.}
\end{figure}

\bibliography{ref}
\bibliographystyle{abbrv}

\Addresses

\end{document}